\setlist[enumerate]{labelsep=*, leftmargin=0.7cm}
\newlist{remarklist}{enumerate}{1}
\setlist[remarklist]{label={(\arabic{remarklisti})}, ref={(\arabic{remarklisti})}}
\renewcommand{\p@remarklisti}{\perh@ps{\thethm}}
\protected\def\perh@ps#1#2{\textup{#1#2}}
\newcommand{\itemrefperh@ps}[2]{\textup{#2}}
\newcommand{\itemref}[1]{\begingroup\let\perh@ps\itemrefperh@ps\ref{#1}\endgroup}
\tikzset{arrow/.style={-stealth}}
\tikzset{arrowshorter/.style={-stealth, shorten <=2pt, shorten >=2pt}}
\theoremstyle{plain}   
\newtheorem{thm}{Theorem}[section] 
\let\c@thm\c@thm\makeatother
\let\c@cor\c@thm\makeatother
\newtheorem{lem}{Lemma}[section]
\let\c@lem\c@thm\makeatother
\newtheorem{prop}{Proposition}[section]
\let\c@prop\c@thm\makeatother
\let\c@claim\c@thm\makeatother
\let\c@conjecture\c@thm\makeatother
\newtheorem*{unnumberedconjecture}{Conjecture}
\newtheorem*{unnumberedtheoremA}{Theorem A}
\newtheorem*{unnumberedtheoremB}{Theorem B}
\newtheorem*{unnumberedtheoremC}{Theorem C}
\theoremstyle{definition}
\newtheorem{defn}{Definition}[section]
\let\c@defn\c@thm\makeatother
\let\c@const\c@thm\makeatother
\newtheorem{notn}{Notation}[section]
\let\c@notn\c@thm\makeatother
\theoremstyle{remark}
\newtheorem{rmk}{Remark}[section]
\let\c@rmk\c@thm\makeatother
\newtheorem{ex}{Example}[section]
\let\c@ex\c@thm\makeatother
\let\c@observation\c@thm\makeatother
\let\c@warning\c@thm\makeatother
\newtheorem{digression}{Digression}[section]
\let\c@digression\c@thm\makeatother
\newtheorem{answ}{Answer}
\let\c@equation\c@thm
\numberwithin{equation}{section}
\newcommand{\newrefformat}[2]{}
\crefname{lem}{Lemma}{Lemmas}
\crefname{thm}{Theorem}{Theorems}
\crefname{defn}{Definition}{Definitions}
\crefname{notn}{Notation}{Notations}
\crefname{const}{Construction}{Constructions}
\crefname{prop}{Proposition}{Propositions}
\crefname{rmk}{Remark}{Remarks}
\crefname{cor}{Corollary}{Corollaries}
\crefname{equation}{Display}{Displays}
\crefname{ex}{Example}{Examples}
\crefname{answ}{Answer}{Answers}
\newcommand{\cC}{\mathcal{C}}
\newcommand{\cD}{\mathcal{D}}
\newcommand{\cS}{\mathcal{S}}
\newcommand{\cat}{\cC\!\mathit{at}}
\newcommand{\set}{\cS\!\mathit{et}}
\newcommand{\sset}{\mathit{s}\set}
\newcommand{\strat}{\cS\!\mathit{trat}}
\newcommand{\psh}[1]{\set^{#1^{\op}}}
\newcommand{\spsh}[1]{\sset^{#1^{\op}}}
\newcommand{\ihom}[2]{#2^{#1}}
\newcommand{\pstrat}{\psh{t\Delta}}
\DeclareMathOperator{\op}{op}
\newcommand{\thin}[2]{t#1_{#2}}
\newcommand{\maybethin}[2]{(t)#1_{#2}}
\newcommand{\eqDelta}{\Delta[3]_{eq}}
\DeclareMathOperator{\colim}{colim}
\DeclareMathOperator{\id}{id}
\DeclareMathOperator{\im}{im}
\DeclareMathOperator{\Ob}{Ob}
\newcommand{\Fun}{\mathbf{Fun}} 
\DeclareMathOperator{\Ho}{Ho}
\DeclareMathOperator{\Hom}{Hom}
\DeclareMathOperator{\Map}{Map}
\newcommand{\aamalg}[1]{\underset{#1}{{\amalg}}} 
\DeclareMathOperator{\degr}{deg}
\author{Viktoriya Ozornova}
\address{Fakult\"at f\"ur Mathematik, Ruhr-Universit\"at Bochum, D-44780 Bochum, Germany}
\email{viktoriya.ozornova@rub.de}
\author{Martina Rovelli}
\address{Department of Mathematics,
Johns Hopkins University,
Baltimore, MD 21218, USA}
\email{mrovelli@math.jhu.edu}
\begin{document}

\title{Model structures for $(\infty,n)$-categories on (pre)stratified simplicial sets and prestratified simplicial spaces}

\maketitle

\begin{abstract}
\vspace{-0.5cm}
We prove the existence of a model structure on the category of stratified simplicial sets whose fibrant objects are precisely $n$–complicial sets, which are a proposed model for $(\infty,n)$-categories, based on previous work of Verity and Riehl. We then construct a Quillen equivalent model based on simplicial presheaves over a category that can facilitate the comparison with other established models.
\end{abstract}

\tableofcontents

\section*{Introduction}

Grothendieck
and Quillen observed that the nerve of ordinary $1$-categories, taking values in simplicial sets, allows us to study the homotopy theory of categories. The nerve construction is a fully faithful embedding, and its essential image can be characterized as those simplicial sets which admit unique inner horn extensions.
Dropping the uniqueness requirements led to the notion of a ``quasi-category'', which is a model of an $(\infty,1)$-category. Joyal and Lurie took the first steps towards understanding the theory of $(\infty,1)$-categories by means of a model structure on the category of simplicial sets whose fibrant objects are precisely quasi-categories.

One might want to try and apply the same ideas to $2$-categories or more generally $n$-categories. When $n>1$, the approach has the caveat that the simplices in dimension at least $2$ have to play a double role, encoding both higher (possibly non-invertible) cells of an $n$-category, but also recording the composition of the lower-dimensional cells. As a manifestation of this issue, the nerve construction for $n$-categories, as defined by Street, fails to be fully faithful.

In order to solve this problem, Roberts
introduced the additional structure of a ``stratification'' on the nerve, declaring that the simplices corresponding to identity cells are ``marked''. Verity showed that the resulting Roberts--Street nerve, taking values in ``stratified simplicial sets'',
is a fully faithful functor whose image can be characterized as those stratified simplicial sets that admit unique extensions with respect to three classes of maps, recovering a similar picture to the case $n=1$.
These stratified simplicial sets go under the name of ``strict $n$-trivial complicial sets''.

Given that the naturally occurring examples of higher categories are not strict, one might want to modify the stratified nerve construction to accommodate the new framework.
To this end, rather than working with a stratification in which only identities are marked, one can instead consider the ``saturated'' stratification for the nerve of an $n$-category, in which the marked simplices are precisely the equivalences.
The resulting nerve is an ``$n$-trivial (weak) complicial set'', in the sense that it admits extensions with respect to the same classes of maps, but the lifts are no longer unique.
However, when endowed with the saturated stratification, the nerve gains the right lifting property with respect to a fourth kind of map,
as a manifestation of the fact that all equivalences are marked.

The $n$-trivial complicial sets that are also saturated, or for short~$n$-com\-plicial sets, are a proposed model of $(\infty,n)$-categories. 
This perspective, firstly proposed by Verity in unpublished work \cite{VeritySlides}, was widely explored by Riehl \cite[\textsection 3]{EmilyNotes}, and the existence of a model structure for $n$–complicial sets was conjectured.
It is expected that this model structure
on $\strat$ should be equivalent to the other established models of $(\infty,n)$-categories (cf~\cite[Conjecture 15.13]{BarwickSchommerPries}).
A variety of other models of $(\infty,n)$-categories are already known to be equivalent, see eg\ \cite{Ara,BarwickSchommerPries, br1, br2,haugseng}; amongst them we mention Rezk's $\Theta_n$-spaces \cite{rezkTheta}, which are fibrant objects in the category $\spsh{\Theta_n}$ of simplicial presheaves over the category $\Theta_n$.
The aim of this work is to establish the model structure for $n$-complicial sets and take the first steps towards the further comparison with $\Theta_n$-spaces.

Given that the models of $n$-complicial sets and of $\Theta_n$-spaces offer different advantages, an explicit comparison would yield a useful tool to export the constructions from one model to another. For instance, when working with (pre)stratified simplicial sets the pseudo-Gray tensor product is easy to define, as just the product of the underlying simplicial sets endowed with a certain stratification.

On the other hand, the globular approach of $\Theta_n$-spaces is a powerful setup to talk about dualities of an $(\infty,n)$-category.

In \cite{EmilyNotes}, Riehl conjectured (based on ideas by Verity) the existence of a model structure for $n$–complicial sets, as a special instance of a theorem of Verity  \cite{VerityComplicialI}, which gives conditions to obtain cartesian
model structures on the category $\strat$ of stratified simplicial sets.
In this paper we start by providing the verifications of these conditions, obtaining a proof of the desired model structure for $n$-complicial sets, which appears as \cref{modelstructureonstrat}.

\begin{unnumberedtheoremA}
For $n\ge0$, 
there is a cartesian model structure on the category $\strat$ of stratified simplicial sets, whose fibrant objects are precisely $n$-complicial sets.
\end{unnumberedtheoremA}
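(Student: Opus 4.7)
The plan is to deduce Theorem~A from the general existence result of Verity in \cite{VerityComplicialI}, which provides a criterion for a set of elementary anodyne extensions on $\strat$ to generate a cartesian model structure in which the fibrant objects are precisely those stratified simplicial sets admitting (not necessarily unique) extensions with respect to the chosen generators. Concretely, I would take as generating cofibrations the boundary inclusions of the standard and marked simplices from Verity's original complicial model structure, and as generating (trivial cofibrations, i.e.) elementary anodyne extensions the union of three families: (i) the complicial horn inclusions and their ``thinness'' companions, which force the solutions of weak composition and ensure that composites of thin simplices are thin, (ii) the $n$-triviality maps, which declare all simplices of dimension strictly greater than $n$ to be thin, and (iii) the saturation maps, which declare equivalences in the sense of the complicial nerve of $\Delta[3]_{eq}$ to be thin. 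An object is fibrant with respect to this data exactly when it is a weak complicial set that is $n$-trivial and saturated, i.e.\ an $n$-complicial set.

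The first step is to set up the formal recollections about $\strat$, the Street--Roberts--Verity framework, and the pushout-product $\hat{\times}$, and to record Verity's theorem in a form directly applicable here. Then I would enumerate the three families of elementary anodyne extensions precisely, and identify ``fibrant for this data'' with ``$n$-complicial'', which is essentially a definitional matching exercise once all classes have been named.

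The core technical content is verifying Verity's hypotheses, which reduce to checking that the pushout-product of any generating cofibration with any generating elementary anodyne extension is again an anodyne extension (the ``cartesian closure'' axiom at the level of generators), and that the classes are closed under the usual saturation operations. For the classical complicial generators this is Verity's theorem, so the real work is to handle the new generators, namely the $n$-triviality and saturation maps. I would check these in two steps: first, analyze the pushout-product of an $n$-triviality generator with a boundary inclusion, which amounts to a combinatorial argument about when a simplex of dimension $>n$ lies in a face of $\Delta[m]\times\Delta[k]$; and second, analyze the pushout-product involving the saturation generator built from $\Delta[3]_{eq}$, which requires identifying its pushout-products with boundary inclusions as composites of saturation and complicial anodyne extensions. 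Both arguments are essentially computations in $\pstrat$, but they require carefully tracked stratifications.

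The main obstacle I anticipate is precisely this last point: the pushout-product of the saturation map with a boundary inclusion produces a map whose stratification mixes all three families, and showing it lies in the weak saturation of the generators requires decomposing the domain into a transfinite tower of pushouts of anodyne extensions (a ``filtration argument'' in the style of Verity's original proof). Once these closure verifications are complete, Verity's theorem applies verbatim and delivers the asserted cartesian model structure, proving Theorem~A.
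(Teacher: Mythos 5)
Your proposal is correct and follows essentially the same route as the paper: invoke Verity's existence theorem for model structures on $\strat$, reduce its hypotheses to pushout-product statements between the elementary anodyne extensions and the generating monomorphisms, defer the complicial horn and thinness cases to Verity's original work, and carry out the new combinatorial filtration arguments for the triviality and saturation generators (the paper does exactly this in \cref{AnodyneTimesMonoStrat} via \cref{PPsaturationboundary,PPsaturationmarking,PPtrivialityboundary,PPtrivialitymarking}). The only point to keep in mind when executing the plan is that the generating cofibrations include the entire inclusions $\Delta[m]\hookrightarrow\Delta[m]_t$ as well as the boundary inclusions, so each new generator requires two pushout-product verifications, not one.
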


To record the piece of information given by a stratification on a simplicial set, one can identify a category $t\Delta$ obtained by adding to $\Delta$ new objects $[m]_t$ as well as structure maps $[m]\to[m]_t$ for $m\ge1$.
A ``stratified simplicial set'' is then a presheaf $X\colon t\Delta^{\op}\to\set$ with the further condition that the set of marked $k$-simplices $X([m]_t)$ is contained in the set $X([m])$ of ordinary $k$-simplices, namely that the new structure map $X([m]_t)\to X([m])$ is an inclusion.

With the further goal in mind of finding an explicit comparison of models between Rezk's $\Theta_n$-spaces and Riehl--Verity's $n$-complicial sets, we produce two intermediate model structures on the categories $\psh{t\Delta}$ and
$\spsh{t\Delta}$ of ``prestratified simplicial sets'' and ``prestratified simplicial spaces'', respectively.
In this paper, we construct such model structures and show that there are Quillen equivalences
$$\strat\rightleftarrows\psh{t\Delta}\rightleftarrows\spsh{t\Delta}.$$
Producing a Quillen equivalence
$$\spsh{t\Delta}\rightleftarrows\spsh{\Theta_n}$$
with the model of $\Theta_n$-spaces is the subject of an ongoing project, joint with Bergner.

The model structure that we put on $\psh{t\Delta}$ relies on Cisinski's theory of model categories of presheaves \cite{cisinski}, generalizing the techniques involved in establishing the Joyal model structure on $\sset$ for quasi-categories \cite{joyalnotes}.

\begin{unnumberedtheoremB}
For $n\ge0$, there is a model structure on the category $\pstrat$ of prestratified simplicial sets, whose fibrant objects are called $n$-precomplicial sets, and this model structure is Quillen equivalent to the model structure on the category $\strat$ for $n$-complicial sets.
\end{unnumberedtheoremB}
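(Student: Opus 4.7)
The plan is to invoke Cisinski's theory of model structures on presheaf categories \cite{cisinski} to build the model structure on $\pstrat = \psh{t\Delta}$, in close analogy with the construction of the Joyal model structure on $\sset$, and then to compare it with the model structure on $\strat$ produced in Theorem~A via the natural reflection adjunction between $\pstrat$ and its full subcategory $\strat$.

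First, since $\pstrat$ is a presheaf category, Cisinski's machinery applies once one specifies a cellular model and a class of anodyne extensions. For the cellular model I would take analogues of the standard generating cofibrations used in $\strat$, namely the boundary inclusions $\partial\Delta[m]\to\Delta[m]$ together with the elementary marking maps $\Delta[m]\to\Delta[m]_t$ coming from the structure maps $[m]\to[m]_t$ in $t\Delta$. For the anodyne extensions I would take a set mimicking the generating acyclic cofibrations used in Verity's argument for $n$-complicial sets (complicial horn fillings, thinness extensions, and the saturation/equivalence-marking maps), now formulated inside $\pstrat$, together with an additional set of maps forcing the marking map $X([m]_t)\to X([m])$ to be injective up to homotopy. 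Cisinski's theorem then produces a cofibrantly generated model structure whose cofibrations are precisely the monomorphisms and whose fibrant objects, by definition, are the $n$-precomplicial sets.

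Next, I would consider the adjunction
\[ L \colon \pstrat \rightleftarrows \strat \colon i, \]
where $i$ is the fully faithful inclusion and $L$ replaces each marking map $X([m]_t)\to X([m])$ by its image. To show this is a Quillen adjunction, I would verify cell-by-cell that $L$ sends the generating (acyclic) cofibrations chosen above to (acyclic) cofibrations in $\strat$, or equivalently that $i$ preserves fibrations and fibrant objects. To promote to a Quillen equivalence it suffices to show that $i$ reflects weak equivalences between fibrant objects (immediate from full faithfulness) and that the unit $\eta_X \colon X \to iL(X)$ is a weak equivalence in $\pstrat$ for every object $X$; by construction of $L$, this amounts to saying that collapsing a non-injective marking map to its image is a weak equivalence.

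The main obstacle is the careful choice of anodyne extensions in $\pstrat$: they must simultaneously cut out precisely the $n$-precomplicial sets as the fibrant objects and force each unit $\eta_X$ to be a weak equivalence, while not enlarging the class of weak equivalences on objects already in $\strat$. Concretely, this requires adding maps that witness the injectivity of $X([m]_t)\to X([m])$ for fibrant $X$, and verifying --- via the small object argument and standard retract arguments --- that these maps are compatible with the complicial horn fillers and saturation maps. Once the anodyne extensions are correctly chosen, the Quillen adjunction and Quillen equivalence statements follow by essentially formal manipulations within Cisinski's formalism.
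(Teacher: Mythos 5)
Your overall skeleton (Cisinski's machinery for the model structure on $\pstrat$, then the reflection adjunction $R\colon\pstrat\rightleftarrows\strat\colon i$ for the comparison) is the same as the paper's, but two of your key steps go wrong. First, you propose to enlarge the set of anodyne extensions by ``maps forcing $X([m]_t)\to X([m])$ to be injective up to homotopy'' so as to force the unit $\eta_X\colon X\to iRX$ to be a weak equivalence. This is both unnecessary and incompatible with the statement: the $n$-precomplicial sets are by definition the prestratified simplicial sets with the right lifting property against $\Lambda_n$ alone, and there are strictly more of them than $n$-complicial sets precisely because fibrant objects of $\pstrat$ need not have injective marking maps (cf.\ \cref{generalcomments}); adding your extra maps would cut out a smaller class of fibrant objects than the one the theorem asks for. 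The point you are missing is that the unit $X\to iRX$ is \emph{automatically} an acyclic fibration (\cref{unittrivialfibration}): lifting against $\partial\Delta[m]\hookrightarrow\Delta[m]$ is trivial because $R$ does not change the underlying simplicial set, and lifting against $\Delta[m]\hookrightarrow\Delta[m]_t$ follows from the surjectivity of $tX_m\to\im(tX_m\to X_m)$. No additional anodyne maps are needed.

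Second, your claim that ``$i$ reflects weak equivalences between fibrant objects, immediate from full faithfulness'' is not a proof. Weak equivalences in $\pstrat$ and in $\strat$ are defined by mapping into the fibrant objects of the \emph{respective} category via the respective internal homs, so the fact that $i$ creates weak equivalences is a genuine statement (\cref{inclusioncreatesequivalences}) whose proof uses the cartesianness of the model structure on $\pstrat$, the $\pstrat$-enrichment of the adjunction, and again the acyclic-fibration property of the unit. Finally, you do not address the real technical obstruction in running Cisinski's theorem here: the pushout-product conditions must be verified with pushouts computed in $\pstrat$, whereas the available computations (\cref{AnodyneTimesMonoStrat}) live in $\strat$, and pushouts in the two categories differ in general (\cref{counterexamplepushout}). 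The paper bridges this gap with \cref{retractlemma} and \cref{anodyneinStratisanodyneinpStrat}, showing that a $\Lambda_n$-anodyne extension in $\strat$ remains one in $\pstrat$; without some such argument your appeal to Cisinski's theorem does not go through.
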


The construction of the model structure appears as \cref{modelstructureondiscretepresheaves}, and the Quillen equivalence as \cref{Quillenequivalence}.

Next, in order to create a Quillen equivalent model structure on simplicial presheaves over $t\Delta$ we specialize Ara's method from \cite{Ara}, which is in turn a generalization of methods of Joyal--Tierney \cite{JT} and Cisinski--Moerdijk \cite{CisinskiMoerdijk}.

\begin{unnumberedtheoremC}
For $n\ge0$, there is a cartesian model structure on the category $\spsh{t\Delta}$ of prestratified simplicial spaces, whose fibrant objects are called $n$-precomp\-licial spaces, and this model category is Quillen equivalent to the model structure on $\pstrat$ for $n$-complicial sets.
\end{unnumberedtheoremC}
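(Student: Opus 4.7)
The plan is to follow Ara's blueprint \cite{Ara}, which itself generalizes the methods of Joyal--Tierney and Cisinski--Moerdijk, to transfer the model structure on $\pstrat$ of \cref{modelstructureondiscretepresheaves} to the category $\spsh{t\Delta}$ of simplicial presheaves over $t\Delta$.

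I would begin by placing an initial cartesian model structure on $\spsh{t\Delta}$. A natural choice is the injective model structure, whose cofibrations are the monomorphisms and whose weak equivalences are detected levelwise by the Kan--Quillen model structure on $\sset$; alternatively, after upgrading $t\Delta$ to a Reedy category by declaring the new arrows $[m]\to[m]_t$ to be degree-raising, one can work with the corresponding Reedy model structure. In either presentation, the generating (trivial) cofibrations admit a description in pushout-product form, with one factor in the horizontal ($t\Delta$) direction and one in the vertical ($\sset$) direction.

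I would then left Bousfield localize this starting model structure along a set $S$ consisting of the images under the constant-simplicial-object functor $c^*\colon\pstrat\to\spsh{t\Delta}$ of the generating anodyne maps of the model structure on $\pstrat$ (forcing each level of a fibrant object to be an $n$-precomplicial set), together with pushout-products of those generating anodyne maps with the boundary inclusions $\partial\Delta[k]\hookrightarrow\Delta[k]$ (forcing fibrant objects to be homotopically constant in the simplicial direction with respect to the $n$-precomplicial structure). The fibrant objects of the resulting model structure are then declared to be the $n$-precomplicial spaces. Presenting $S$ in pushout-product form allows the cartesian axiom to be verified via a Leibniz-style calculation, using the cartesian character of both the Kan--Quillen model structure on $\sset$ and the model structure of \cref{modelstructureondiscretepresheaves}.

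The Quillen equivalence would be realized through the adjunction $c^*\colon\pstrat\rightleftarrows\spsh{t\Delta}\colon\mathrm{ev}_0$, where $\mathrm{ev}_0$ evaluates a simplicial presheaf at simplicial degree zero. By design, $c^*$ carries generating (trivial) cofibrations of $\pstrat$ either to cofibrations or to maps in $S$, so the pair is Quillen. To upgrade to a Quillen equivalence I would characterize the fibrant objects of $\spsh{t\Delta}$ as those Reedy-fibrant simplicial presheaves all of whose levels are $n$-precomplicial sets and whose matching maps are weak equivalences of $n$-precomplicial sets; for such a $Y$ the counit $c^*\mathrm{ev}_0 Y\to Y$ is a levelwise weak equivalence, which yields the Quillen equivalence by the standard detection criterion. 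The principal obstacle will be the choice of the localizing set $S$: it must simultaneously guarantee that fibrant objects admit the desired description, that the localized model structure is cartesian, and that the Quillen pair $c^*\dashv\mathrm{ev}_0$ is an equivalence, and the pushout-product presentation of $S$ is the key device for verifying these three requirements in a unified way.
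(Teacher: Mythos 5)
Your overall route coincides with the paper's: start from the injective model structure on $\spsh{t\Delta}$, left Bousfield localize at the constant-simplicial images $p^*(\Lambda_n)$ of the elementary anodyne extensions, and obtain the Quillen equivalence from the adjunction $p^*\dashv i_0^*$ (your $c^*\dashv\mathrm{ev}_0$). The paper carries this out via Ara's localizer machinery in \cref{interestingmodelstructure} and \cref{QE1}; Theorem C formally cites the second equivalence $Real\dashv Sing$ of \cref{secondQuillenequivalence}, but \cref{QE1} equally establishes the claim. That said, your sketch passes over exactly the points where the real work lies, and one of them is a genuine gap rather than a deferral.

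The gap concerns the Quillen equivalence. Your plan — characterize fibrant objects of the localization as levelwise $n$-precomplicial and homotopically constant in the $\Delta$-direction, then check the counit is a levelwise equivalence — is the Joyal--Tierney/Ara argument, and its essential input is a locality condition that your localizing set does not visibly provide: fibrant objects must also be local with respect to the maps $p^*(X\to X\times\Delta[1]_t)$, the ``completeness in the $t\Delta$-direction'' that drives the homotopy-constancy and counit statements. (I read your ``matching maps'' as the simplicial structure maps; literal Reedy matching maps being equivalences would be the wrong condition.) The paper supplies this as \cref{lemmaWRezk}, which uses \cref{anodyneproperties}(1) — pushout-products of $\Delta[0]\hookrightarrow\Delta[1]_t$ with monomorphisms are anodyne, itself resting on the pushout-product computations of Appendix B — to show that the localizer generated by $W_{inj}\cup p^*(\Lambda_n)$ already contains these maps; without this, your localization need not coincide with the model structure to which the equivalence argument applies. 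By contrast, the extra generators you add (pushout-products of anodyne maps with $\partial\Delta[k]\hookrightarrow\Delta[k]$) are redundant: they are automatically trivial cofibrations in any simplicial left Bousfield localization at $p^*(\Lambda_n)$. Two further inputs you should not treat as formalities: the entire apparatus (injective cofibrations being the monomorphisms, the localizer machinery, Ara's theorem) requires $t\Delta$ to be a \emph{regular skeletal} Reedy category, which is not automatic from declaring $[m]\to[m]_t$ degree-raising and is verified in \cref{tDeltasuperReedy} using the relations $\zeta^i\varphi=s^i$ and $s^i\zeta^{j+1}=s^j\zeta^i$; and if you quote Ara's Theorem 4.11 instead of redoing it, it assumes the interval $\Delta[1]_t$ is fibrant, which fails here, so one must check (as the paper does) that the argument only uses that each projection $X\times\Delta[1]_t\to X$ is a weak equivalence.
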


The construction of the model structure appears as 
\cref{interestingmodelstructure}, and  the Quillen equivalence as \cref{secondQuillenequivalence}.

The model of $n$-precomplicial spaces, which is morally a generalization
of Rezk's model for complete Segal spaces \cite{rezkhomotopy}, is also interesting in itself. On the one hand, it carries all the advantages of model categories of simplicial presheaves; for instance, it is easy to define $n$-precomplicial objects in a different model category. On the other hand, the indexing category $t\Delta$ does not depend on $n$, so describing an $n$-precomplicial space for large $n$ requires the same amount of data as for small values of $n$.

In an ongoing project, we aim to achieve the final comparison with $\Theta_n$-spaces.

\begin{unnumberedconjecture}
The model structure on the category $\spsh{t\Delta}$ of prestratified simplicial spaces for $n$-precomplicial spaces is Quillen equivalent to the model structure on Rezk's model structure on $\spsh{\Theta_n}$ for $\Theta_n$-spaces.
\end{unnumberedconjecture}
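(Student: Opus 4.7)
The plan is to construct an explicit Quillen adjunction between $\spsh{\Theta_n}$ (with Rezk's model structure) and $\spsh{t\Delta}$ (with our model structure from Theorem C), and then upgrade it to a Quillen equivalence, following in spirit Ara's comparison in \cite{Ara} between $n$-quasi-categories and $\Theta_n$-spaces.

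The first step is to build a cosimplicial-type object $\Theta_n \to \spsh{t\Delta}$. A natural candidate is the stratified Street nerve: view each $\theta \in \Theta_n$ as a strict $n$-category via Joyal's canonical embedding $\Theta_n \hookrightarrow n\text{-}\mathrm{Cat}$, apply the Roberts--Street nerve to obtain a stratified simplicial set with identities marked, transfer to $\pstrat$ via the equivalence of Theorem B, and finally embed into $\spsh{t\Delta}$ as constant simplicial objects. Left Kan extension along this functor and restriction produce an adjunction $F_! : \spsh{\Theta_n} \rightleftarrows \spsh{t\Delta} : F^*$.

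The second step is to show this adjunction is a Quillen pair. By the cofibrant generation of both model structures, this reduces to checking that $F_!$ sends each generating cofibration of Rezk's model structure to a cofibration, and each generating trivial cofibration (the Segal and completeness maps) to a weak equivalence in $\spsh{t\Delta}$. The Segal maps should map to composites of inner horn inclusions, while the completeness map should correspond to the marked-interval inclusion witnessing saturation. The $n$-triviality condition on the $\spsh{t\Delta}$ side is automatic, since the stratified nerves of objects of $\Theta_n$ are $n$-trivial by construction.

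The main obstacle is upgrading this Quillen pair to a Quillen equivalence. We propose an inductive argument on $n$: for $n = 0$ both sides reduce to spaces, and for $n = 1$ one recovers a comparison between complete Segal spaces and a simplicial-presheaf version of quasi-categories, which should match the known Joyal--Tierney equivalence. For the inductive step, one exploits the wreath product decomposition $\Theta_n \simeq \Delta \wr \Theta_{n-1}$ and attempts to match it, via a suspension construction, with the filtration of $t\Delta$-indexed prestratified spaces by the dimensions of their marked simplices, reducing to the inductive hypothesis. The most delicate point is to match the saturation axiom on the $n$-complicial side with the completeness axiom on the $\Theta_n$ side: both express invertibility of certain cells, but the combinatorics are genuinely different, and a careful bookkeeping of how equivalences propagate through the stratified nerve will be required. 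As an alternative, one could invoke Cisinski's localization criterion and establish the equivalence by identifying the classes of local objects on both sides through a direct comparison.
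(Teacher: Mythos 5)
This statement is stated in the paper as an open \emph{conjecture}: the authors explicitly defer it to ``an ongoing project, joint with Bergner,'' and only record that the cases $n=1$ and $n=2$ are known by combining external results of Joyal--Tierney, Lurie, and Gagna--Harpaz--Lanari. There is therefore no proof in the paper to compare against, and what you have written is a research programme rather than a proof. Your outline is reasonable and broadly consistent with how one would expect the comparison to go (it parallels Ara's comparison of $n$-quasi-categories with $\Theta_n$-spaces), but every step that actually carries mathematical weight is asserted rather than established.

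Concretely: (i) the claim that $F_!$ sends the Segal maps to ``composites of inner horn inclusions'' and the completeness map to ``the marked-interval inclusion'' is exactly the content that needs proof, and already for $n=1$ the analogous verification in Joyal--Tierney is substantial; (ii) the upgrade from Quillen pair to Quillen equivalence is the genuinely open part, and your inductive step rests on an unspecified ``suspension construction'' matching $\Theta_n\simeq\Delta\wr\Theta_{n-1}$ with a ``filtration of $t\Delta$-indexed prestratified spaces by the dimensions of their marked simplices'' --- no such filtration compatible with the wreath product is constructed or known, and $t\Delta$ does not decompose as a wreath product, which is precisely why the comparison is hard; (iii) you yourself flag the matching of saturation against completeness as requiring ``careful bookkeeping,'' which is an acknowledgment that the argument is incomplete at its most delicate point. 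One smaller remark: for objects of $\Theta_n$, which are gaunt, the Roberts--Street nerve with only identities marked agrees with the saturated nerve, so your choice of cosimplicial object is defensible; but the paper's own discussion of the role of saturation shows that $N^{RS}$ interacts subtly with the saturated model structure, so even this choice would need justification. In short, the statement remains a conjecture, and the proposal does not close it.
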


The equivalence of models has already been established for $n=1$, as a consequence of the fact that Lurie’s model structure for naturally marked simplicial sets and Rezk’s model structure for complete Segal spaces are Quillen equivalent, and for $n=2$, as a consequence of work by Lurie \cite{LurieGoodwillie} and by Gagna, Harpaz and Lanari \cite{GHL}.

 \addtocontents{toc}{\protect\setcounter{tocdepth}{1}}

\subsection*{The case $n=\infty$}
As a final note, we point out that this paper focuses on $n$-(pre)complicial sets as a model of $(\infty,n)$-categories for finite $n$, but the constructions and theorems are also valid for the case $n=\infty$. In particular, one gets a notion of an \emph{$\infty$-(pre)complicial set}, as well as a model structure for $\infty$-(pre)complicial sets, and might wish to relate them to some version of an $(\infty,\infty)$-category.

A precise notion of an $(\infty,\infty)$-category has not been formally established. However, based on the notion of an $(\infty,n)$-category for all finite $n$, some of the experts seem to agree that there are (at least) two meaningful approaches one can take. In a mathoverflow post \cite{jf}, Barwick, Rezk and Schommer-Pries consider two different notions for the $(\infty,1)$-category of $(\infty,\infty)$-categories, each given by the homotopy limit of a tower of functors from the $(\infty,1)$-category of $(\infty,n+1)$-categories to the $(\infty,1)$-categories of $(\infty,n)$-categories. The functors occurring in these towers are respectively the right and left adjoints to the inclusions of $(\infty,n)$-categories into $(\infty,n+1)$-categories.

Roughly speaking, in the first case, referred to as \emph{inductive}, a map of $(\infty,\infty)$-categories is a weak equivalence if and only if it induces equivalences of \emph{underlying} $(\infty,n)$-categories, and in the second case, referred to as \emph{coinductive}, if and only if it induces equivalences of \emph{truncated} $(\infty,n)$-categories. The different nature of the two notions is already present when working with strict higher categories; for instance, Lafont--M\'etayer--Worytkiewicz \cite{LMW} define $\omega$-categories following the coinductive point of view.

We believe that $\infty$-(pre)complicial sets should provide a model for the inductive notion of $(\infty,\infty)$-categories. Moreover, they can conjecturally be used to implement the coinductive notion, by localizing with respect to the nerve of the free $\infty$-categorical adjoint equivalence (denoted by $\textbf{P}^1$ in \cite{LMW}), considered as maximally marked. These topics will be addressed in future work.

 \addtocontents{toc}{\protect\setcounter{tocdepth}{1}}
 
\subsection*{Acknowledgements}
The authors are grateful to Emily Riehl and Dominic Verity for generously sharing some of their drafts and ideas, and for giving us valuable inputs. We also would like to thank Julie Bergner, Magdalena K\c edziorek and Lennart Meier 
for useful conversations. The exposition of the paper benefited greatly from the referee's comments. 
The second-named author was partially funded by the Swiss National Science Foundation, grant P2ELP2\textunderscore172086.  The authors would like to thank the Isaac Newton Institute for Mathematical Sciences, Cambridge, for support and hospitality during the program ``Homotopy Harnessing Higher Structures'' where work on this paper was undertaken. This work was supported by EPSRC grant numbers EP/K032208/1 and EP/R014604/1.

\addtocontents{toc}{\protect\setcounter{tocdepth}{2}}

\section{The model structure for $n$-(pre)complicial sets}
\label{section1}
In this section, we define the categories $\strat$ and $\pstrat$ of stratified and prestratified simplicial sets, and put Quillen equivalent model structures on them. The fibrant objects, called ``$n$-(pre)complicial sets'', are a proposed model for $(\infty,n)$-categories.

\subsection{Prestratified and stratified simplicial sets}

We start by giving a description of Verity's category $t\Delta$ from \cite{VerityComplicialI}, in terms of its generators and relations.

\begin{notn}
\label{tDelta}
Let $t\Delta$ be the category defined as follows.
The set of objects is given by
$$\Ob(t\Delta):=\{[m]\text{ for all }m\geq 0\}\cup\{[m]_t\text{ for all }m\geq 1\}.$$
The maps in $t\Delta$ are generated under composition by the following four kinds of maps:
\begin{itemize}
\item cofaces $d^i\colon[m]\to[m+1]$ for $0\leq i \leq m+1$,
\item codegeneracies $s^i\colon[m]\to[m-1]$ for $0 \leq i \leq m-1$,
\item counmarking maps $\varphi\colon[m]\to[m]_t$ for $m\geq 1$,
\item comarking maps 
$\zeta^{i}\colon[m]_t\to[m-1]$ for $0\leq i \leq m-1$. 
\end{itemize}
subject to the usual cosimplicial identities
   \begin{alignat*}{2}
    d^jd^{i}  =  d^id^{j-1}\colon &[m-2]\to [m]  \mbox{ if } 0 \leq i<j\leq m\\
    s^jd^{i}  =  d^is^{j-1} \colon& [m] \to [m]\mbox{ if } 0\leq i<j\leq m\\
    s^jd^j  =  \id =s^jd^{j+1} \colon& [m] \to [m]\mbox{ if }0 \leq j \leq m\\
    s^jd^i  =  d^{i-1}s^j \colon& [m] \to [m]\mbox{ if } 1\leq j+1 < i \leq m+1\\ 
    s^js^i  =  s^is^{j+1}\colon& [m+2] \to [m]\text{ if }0\leq i \leq j \leq m
   \end{alignat*}
and the additional relations
\begin{alignat*}{2}
\zeta^i\varphi &=s^i\colon [m+1]\to[m]\text{ for }m\ge1\text{ and }0\leq i \leq m,\\
s^i\zeta^{j+1} &=s^j\zeta^{i} \colon [m+2]_t \to [m]\text{ for }0\leq i \leq j \leq m.
\end{alignat*}
The generating morphisms of $t\Delta$ can be pictured as follows.
\[
\begin{tikzcd}[row sep=large, column sep=large]
{[0]}
 \arrow[r, arrow, shift left=1ex] \arrow[r, arrow, shift right=1ex] 
& 
{[1]}\arrow[l, arrow]  \arrow[r, arrow] \arrow[r, arrow, shift left=1.5ex] \arrow[r, arrow, shift right=1.5ex]
\arrow[d, arrow, "\varphi"]& 
{[2]}
\arrow[l, arrowshorter, shift left=0.75ex] \arrow[l, arrowshorter, shift right=0.75ex] 
\arrow[d, arrow, "\varphi"]&[-0.8cm]\cdots\\
 & 
{[1]_t}  
\arrow[ul, arrowshorter, "\zeta^0"]
& 
{[2]_t} \arrow[ul, arrowshorter, shift left=0.5ex, "\zeta^0"] \arrow[ul, arrowshorter, shift right=0.5ex,"\zeta^1" swap] 
&\cdots
\end{tikzcd}
\]
\end{notn}

\begin{digression}
Verity essentially gives this description of $t\Delta$ in terms of generators and relations in \cite[Observation 12]{VerityComplicialI}. The list of relations is however not exhaustive, as the family of relations $s^k\circ\zeta^{l+1} =s^l\circ\zeta^{k}$ is omitted\footnote{Without the relation $s^k\circ\zeta^{l+1} =s^l\circ\zeta^{k}$, we almost recover instead Street's category $h\Delta$ from \cite[Section 5]{StreetOrientedSimplexes}, modulo the fact that $h\Delta$ does not contain $[1]_t$ amongst its objects.}.
The complete description of $t\Delta$ will appear in \cite{RiehlVerityBook}.
\end{digression}

We are interested in presheaves over $t\Delta$.

\begin{defn}
A \emph{prestratified simplicial set} is a presheaf $X\colon t\Delta^{\op}\to\set$. We write
$$X_m:=X([m])\text{ and }\thin{X}{m}:=X([m]_t),$$
and we write
$$\maybethin{X}{m}:=X([m]_{(t)})$$
for the value of $X$ at a generic element $[m]_{(t)}$ of $t\Delta$.
We denote by $\psh{t\Delta}$ the category of prestratified simplicial sets.
\end{defn}

The inclusion $\Delta\hookrightarrow t\Delta$ induces a functor
$$U\colon\pstrat\to\sset$$
which sends a prestratified simplicial set to its underlying simplicial set. In particular, to describe a (pre)stratified simplicial set it is enough to give the underlying simplicial set, and describe which simplices are marked (possibly with multiple labels). The first of the two extra relations present in the indexing category $t\Delta$ translates to saying that any degenerate simplex is marked, with a distinguished label in the case of multiple marking. The second extra relation says then that these distinguished markings coincide whenever the simplex can be expressed as an iterated degeneracy in different ways.

\begin{rmk}
\label{sharpandflat}
The forgetful functor $\pstrat \to \sset$ mapping a prestratified simplicial set to its underlying simplicial set has both a left and a right adjoint,
$$(-)^\flat\colon \sset \rightleftarrows \pstrat\colon U\text{ and }U\colon \pstrat \rightleftarrows \sset\colon (-)^\sharp.$$
The left adjoint 
$(-)^\flat$ assigns to a simplicial set $X$ its minimal stratification $X^\flat$, where only degenerate simplices are marked. We regard this one as the canonical stratification on a simplicial set, and often omit the notation.
Similarly, the right adjoint $(-)^\sharp$ assigns to a simplicial set $X$ its maximal stratification $X^\sharp$ where all simplices marked in positive degrees.
\end{rmk}

We record for further reference the notation for representable prestratified simplicial sets.

\begin{notn}
\label{notationrepresentable}
We denote by
\begin{itemize}
    \item $\Delta[m]$ the prestratified simplicial set represented by $[m]$ for $m\ge0$. In particular, only degenerate simplices are marked. We observe that $\Delta[0]$ is the terminal object of $\pstrat$.
    \item $\Delta[m]_t$ the prestratified simplicial set represented by $[m]_t$ for $m\ge1$. In particular, besides the degenerate simplices, the only non-degenerate $m$-simplex is marked. 
    \item $\Delta[-1]$ the prestratified simplicial set that is constant at the empty set, ie, given componentwise by $\maybethin{\Delta[-1]}{k}=\varnothing$.
    We observe that $\Delta[-1]$ is the initial object of $\pstrat$.
\end{itemize}
\end{notn}

A direct verification (or an instance of \cite[Proposition 1.2.27]{cisinski}) shows the following characterization of monomorphisms in $\pstrat$.

\begin{lem}
\label{generatingmonos}
The class of monomorphisms in $\pstrat$ is the saturation in the sense of \cref{saturation}
of the set of maps
 $$I:=\{\partial\Delta[m]\hookrightarrow\Delta[m]\ |\  m \geq 0 \}\cup\{\Delta[m]\hookrightarrow\Delta[m]_t\ |\ m \geq 1\}.$$
\end{lem}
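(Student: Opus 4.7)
The plan is to verify both inclusions. The easier direction is that every map in the saturation of $I$ is a monomorphism: each generator in $I$ is a componentwise injection of presheaves, and the class of monomorphisms in any presheaf category is closed under pushout, transfinite composition, and retracts.

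For the converse, given a monomorphism $f\colon X\hookrightarrow Y$ in $\pstrat$, I would exhibit $f$ as a transfinite composition of pushouts of maps in $I$ via a skeletal filtration. Define $Y^{(-1)}:=X$, and for $m\ge 0$ let $Y^{(m)}$ be the smallest sub-presheaf of $Y$ containing $Y^{(m-1)}$ together with the images of all maps $\Delta[i]\to Y$ and $\Delta[i]_t\to Y$ for $i\le m$. Then $Y=\colim_m Y^{(m)}$, and it suffices to show that each inclusion $Y^{(m-1)}\hookrightarrow Y^{(m)}$ is a (possibly transfinite) composition of pushouts of maps in $I$. This is done in two substeps. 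First, attach the non-degenerate $m$-simplices of $Y$ not already present in $Y^{(m-1)}$ by pushing out along $\partial\Delta[m]\hookrightarrow\Delta[m]$, one for each such simplex classified via the Yoneda embedding. Second, attach the genuinely new markings $\mu\in\thin{Y}{m}\setminus\thin{Y^{(m-1)}}{m}$ by pushing out along $\Delta[m]\hookrightarrow\Delta[m]_t$, one for each.

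I expect the main subtlety, and hence the part that deserves care, to be the bookkeeping of marked simplices in the presence of the two extra relations $\zeta^i\varphi=s^i$ and $s^j\zeta^{i+1}=s^i\zeta^j$ present in $t\Delta$. These relations force every degenerate simplex to carry a canonical marking, coherently in the case of iterated degeneracies, so one must identify precisely which elements of $\thin{Y}{m}$ count as genuinely new and need to be attached by a cell of the second kind, as opposed to those produced automatically once the underlying simplex has been attached. Concretely, this means restricting the index of the second substep to markings that do not lie in the image of any degeneracy/marking operator emanating from already-attached cells in $Y^{(m-1)}$, together with a choice of representatives. Once this combinatorial check is done, the verification that the square produced at each stage is a pushout of sets separately at each object of $t\Delta$ is routine.

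Alternatively, the statement is a direct application of Cisinski's general characterization of monomorphisms in presheaf categories \cite[Proposition 1.2.27]{cisinski} already cited, which encapsulates exactly this bookkeeping uniformly across all small indexing categories; invoking it avoids the delicate case analysis above.
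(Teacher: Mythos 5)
Your proposal is correct and matches the paper, which offers no argument beyond the phrase ``a direct verification (or an instance of \cite[Proposition 1.2.27]{cisinski})''; your skeletal-filtration sketch is a sound elaboration of that direct verification, and you correctly isolate the one real subtlety, namely that the relations $\zeta^i\varphi=s^i$ and $s^i\zeta^{j+1}=s^j\zeta^i$ make the canonical markings of degenerate simplices come for free (since every non-identity map out of $[m]_t$ factors through some $\zeta^i$), so that only markings outside the images of the $\zeta_i$'s need to be attached along $\Delta[m]\hookrightarrow\Delta[m]_t$.
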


Any monomorphism of prestratified simplicial sets factors into two pieces: a ``regular'' inclusion and an ``entire'' inclusion, which we now define. These notions agree with Verity's original ones from \cite[Definition 9]{VerityComplicialI} in the case of maps of ``stratified simplicial sets'', which will be defined later.

\begin{defn}
A monomorphism of prestratified simplicial sets $j \colon X \hookrightarrow Y$ is
\begin{enumerate}
    \item \emph{entire} if it is an identity on the underlying simplicial set.
    \item \emph{regular} if
    for every $m\geq 1$, the following diagram is a pullback. 
\[
\begin{tikzcd}
\thin{X}{m} \arrow[r, "\varphi"] \arrow[d, "j" swap, hook] & X_m\arrow[d, "j", hook]\\
\thin{Y}{m} \arrow[r, "\varphi" swap]  & Y_m
\end{tikzcd}
\]
    \end{enumerate}
\end{defn}

\begin{rmk}
The class of regular inclusions is closed under compositions, retracts,  transfinite compositions, and products with any prestratified simplicial set.
\end{rmk}

Verity works with prestratified simplicial sets that satisfy a further condition.

\begin{defn}[{\cite[Definition 5]{VerityComplicialI}}]
A \emph{stratified simplicial set}\footnote{We warn the reader
that Verity's use of the terminology ``stratified'' is unrelated to that of Ayala--Francis--Tanaka  \cite{AyalaFrancisTanaka} and Lurie \cite{LurieHA}.}
is a prestratified simplicial set $X\colon t\Delta^{\op}\to\set$ such that 
the structure maps
$$\thin{X}{m}\to X_m$$
induced by the coforgetful maps $\varphi\colon[m]\to[m]_t$ 
are injective for all $m\ge 1$.
We denote by $\strat$ the full subcategory of $\pstrat$ given by the stratified simplicial sets.
\end{defn}

For a stratified simplicial set $X\colon t\Delta^{\op}\to\sset$, we think of $X_m$ as the set of $m$-simplices, and of $\thin{X}{m}\subset X_m$ as the subset of \emph{marked} or \emph{thin} $n$-simplices.
Following the same intuition, when $X$ is only prestratified an $m$-simplex can be marked admitting \emph{multiple labels}, and $\thin{X}{m}$ is then a set of labels of marked $m$-simplices. An example of a prestratified simplicial set that is not stratified will be given in \cref{counterexamplepushout}.

\begin{rmk}
\label{DeltaintDelta}
All the representable prestratified simplicial sets from \cref{notationrepresentable} are stratified simplicial sets and the full subcategory of $\strat$ given by the standard simplices $\Delta[m]$ for $m\ge0$ and the standard marked simplices $\Delta[m]_t$ for $m\ge1$ is isomorphic to $t\Delta$. In particular, in this category we see that, beside the usual cosimplicial maps between standard simplices there are entire maps $\varphi\colon\Delta[m]\to\Delta[m]_t$ for $m\ge0$, and for $m\ge1$ the $i$-th codegeneracy induces a map $\zeta^i\colon\Delta[m]_t\to\Delta[m-1]$.
\end{rmk}

\label{categoricalproperties}
As pointed out in \cite[Observation 12]{VerityComplicialI}, $\strat$ is a reflective subcategory of $\pstrat$, as the inclusion $i$ fits into an adjoint pair
$$R\colon\pstrat\rightleftarrows\strat\colon i.$$
The left adjoint, which we call the \emph{reflector}, can be computed levelwise on objects as 
$$(RX)_n:=X_n\text{ and }\thin{RX}{n}:=\im(\thin{X}{n}\to X_n).$$
In particular, limits and colimits in $\pstrat$ are pointwise, limits in $\strat$ are computed in $\pstrat$, and colimits in $\strat$ are obtained by applying the reflector to the colimits in $\pstrat$.

\begin{ex}
\label{counterexamplepushout}
Note that pushouts in $\pstrat$ and in $\strat$ might differ a lot. Indeed, given a diagram of stratified simplicial sets and monomorphisms, the pushout in $\pstrat$ might not be a stratified simplicial set, and therefore cannot be the pushout in $\strat$. For example, the pushout in $\pstrat$ of the diagram of entire inclusions
\[
\begin{tikzcd}
\Delta[m]_t &\Delta[m] \arrow[l, hook'] \arrow[r, hook]& \Delta[m]_t
\end{tikzcd}
\]
is not a stratified simplicial set.
\end{ex}

However, as an instance of the following proposition, sometimes the push\-outs in $\strat$ and in $\pstrat$ do coincide.

\begin{prop}
\label{RegularPushout}  The pushout in $\pstrat$ of a regular inclusion of stratified simplicial sets along any map of stratified simplicial sets is a regular inclusion of stratified simplicial sets.
\end{prop}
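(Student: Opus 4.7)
The plan is to compute the pushout $P$ in $\pstrat$ degreewise as a pushout of sets, and then extract all three required properties---that $P$ is stratified, that $j'\colon C\to P$ is a monomorphism, and that $j'$ is regular---from the explicit description of pushouts along injective maps in $\set$.

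Since colimits in the presheaf category $\pstrat$ are pointwise, we have for each level $m\ge 1$
$$P_m \;=\; B_m \sqcup_{A_m} C_m \qquad\text{and}\qquad \thin{P}{m}\;=\; \thin{B}{m} \sqcup_{\thin{A}{m}} \thin{C}{m}.$$
The first thing I would use is that $j$ is a monomorphism, so both $A_m\hookrightarrow B_m$ and $\thin{A}{m}\hookrightarrow\thin{B}{m}$ are injective; this lets me identify these pushouts with the explicit disjoint unions $(B_m\setminus A_m)\sqcup C_m$ and $(\thin{B}{m}\setminus\thin{A}{m})\sqcup\thin{C}{m}$, respectively. From this, levelwise injectivity of $j'$, and hence the fact that $j'$ is a monomorphism in $\pstrat$, is immediate.

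The key step is to use that $j$ is moreover \emph{regular}. In combination with the injectivity of $j$, the defining pullback square amounts to the set equality $\thin{A}{m}=\varphi^{-1}(A_m)$ inside $\thin{B}{m}$, or equivalently $\varphi(\thin{B}{m}\setminus\thin{A}{m})\subseteq B_m\setminus A_m$. Together with the assumption that $B$ and $C$ are stratified, this forces the structure map $\varphi\colon\thin{P}{m}\to P_m$ to respect the disjoint-union decompositions summand by summand and to act injectively on each summand, so $\thin{P}{m}\to P_m$ is injective and $P$ is stratified. The same summand-by-summand description then makes the regularity pullback square for $j'$ tautological: any element of $\thin{P}{m}$ whose $\varphi$-image lies in the $C_m$-summand of $P_m$ must itself come from the $\thin{C}{m}$-summand, since otherwise its image would land in the disjoint summand $B_m\setminus A_m$. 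I do not expect a substantive obstacle; the only nontrivial input is translating regularity of $j$ into the set-equality above, which is precisely what makes the marking compatible with the pushout decomposition---a compatibility that fails without it, as illustrated in \cref{counterexamplepushout}.
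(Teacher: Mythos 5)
Your proof is correct and in substance identical to the paper's: the paper applies ``Reedy's Lemma'' (\cref{ReedyLemmaSets}) to the levelwise cube of sets formed by the structure maps $\varphi$ and the two pushout squares, and your explicit disjoint-union decomposition $(B_m\setminus A_m)\sqcup C_m$, together with the translation of regularity into $\varphi(\thin{B}{m}\setminus\thin{A}{m})\subseteq B_m\setminus A_m$, is exactly the element-level proof of that lemma in this instance. The only difference is packaging: you inline the verification that the paper delegates to the (unproven-in-text) lemma.
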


One can prove this proposition by means of the following lemma.

\begin{lem}[Reedy's Lemma]
\label{ReedyLemmaSets}
Consider a diagram of sets as follows.
\[
\begin{tikzcd}[row sep=0.3cm]
A \arrow[rr, "f"] \arrow[dr,swap,"\alpha"] \arrow[dd,swap,"j"] &&
  B \arrow[dd] \arrow[dr,"\beta"] \\
& A' \arrow[rr,crossing over, "f'" near start] &&
  B' \arrow[dd] \\
C \arrow[rr] \arrow[dr,swap,"\gamma"] && D \arrow[dr,swap,"\delta"] \\
& C' \arrow[rr] \arrow[uu,<-,crossing over, "j'" near end]&& D'
\end{tikzcd}
\]
Suppose that the front and the back faces are pushouts, the maps $\alpha, \beta, \gamma, j, j'$ are injective, and the left-hand face is a pullback.
Then:
\begin{enumerate}
\item the map $\delta$ is injective, and
\item the right-hand face is also a pullback.
\end{enumerate}
\end{lem}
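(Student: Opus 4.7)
The plan is to exploit the explicit form of pushouts along monomorphisms in $\set$. Write $g\colon B \to D$, $h\colon C \to D$, $g'\colon B' \to D'$, $h'\colon C' \to D'$ for the four unlabeled maps in the diagram. Since $j$ is injective and the back face is a pushout, $g$ is also injective and there is a disjoint-union decomposition
\[
D = g(B) \sqcup h(C \setminus j(A)),
\]
with $h$ injective on $C \setminus j(A)$. An analogous decomposition $D' = g'(B') \sqcup h'(C' \setminus j'(A'))$ holds for the front face.

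The pullback hypothesis on the left face enters exactly once, to force these two decompositions to be compatible under $\delta$. Concretely, if $c \in C$ satisfies $\gamma(c) = j'(a')$ for some $a' \in A'$, then by the pullback property there exists $a \in A$ with $\alpha(a) = a'$ and $j(a) = c$, so $c \in j(A)$. Contrapositively, $\gamma$ sends $C \setminus j(A)$ into $C' \setminus j'(A')$, and therefore $\delta$ respects the two decompositions: $\delta(g(B)) \subseteq g'(B')$ by the commutativity $\delta g = g' \beta$, and $\delta(h(C \setminus j(A))) \subseteq h'(C' \setminus j'(A'))$ by $\delta h = h' \gamma$. This compatibility of the two decompositions is the one conceptually nontrivial step; the remainder is routine bookkeeping in $\set$.

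For part (1), injectivity of $\delta$ is then immediate: on $g(B)$, $\delta$ sends $g(b) \mapsto g'(\beta(b))$, which is injective because both $\beta$ and $g'$ are; on $h(C \setminus j(A))$, $\delta$ sends $h(c) \mapsto h'(\gamma(c))$, injective because $\gamma$ is and $h'$ restricts to an injection on $C' \setminus j'(A')$; and the two target pieces are disjoint in $D'$. For part (2), I would verify the pullback universal property pointwise: given $d \in D$ and $b' \in B'$ with $\delta(d) = g'(b')$, the element $d$ must lie in $g(B)$, for otherwise $d = h(c)$ with $c \notin j(A)$ would give $\delta(d) = h'(\gamma(c)) \in h'(C' \setminus j'(A'))$, which is disjoint from $g'(B') \ni g'(b')$; writing $d = g(b)$, the identity $g'(\beta(b)) = \delta(d) = g'(b')$ together with injectivity of $g'$ forces $\beta(b) = b'$, while uniqueness of $b$ follows from injectivity of $g$.
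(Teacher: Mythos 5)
Your argument is correct and complete. The paper states this lemma without proof, so there is no argument to compare against; what you give is the standard one, resting on the explicit description of a pushout along a monomorphism in $\set$ as $D = g(B)\sqcup h(C\setminus j(A))$, and you correctly isolate the single place where the pullback hypothesis on the left face is needed, namely to show $\gamma(C\setminus j(A))\subseteq C'\setminus j'(A')$ so that $\delta$ respects the two decompositions. Both conclusions then follow exactly as you describe.
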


\begin{proof}[Proof of \cref{RegularPushout}]
Given a span 
\[
\begin{tikzcd}
Y &X \arrow[l, hook', "j" swap] \arrow[r, "f"]& Z,
\end{tikzcd}
\]
of stratified simplicial sets such that $j$ is a regular inclusion, we apply \cref{ReedyLemmaSets} to the following diagram
\[
\begin{tikzcd}[row sep=0.3cm]
\thin{X}{m} \arrow[rr, "f"] \arrow[dr,swap,"\varphi"] \arrow[dd,swap,"j"] &&
  \thin{Z}{m} \arrow[dd] \arrow[dr,"\varphi"] \\
& X_m \arrow[rr,crossing over, "f" near start] &&
  Z_m \arrow[dd] \\
\thin{Y}{m}\arrow[rr] \arrow[dr,swap,"\varphi"] && \thin{P}{m} \arrow[dr,swap,"\varphi"] \\
& Y_m \arrow[rr] \arrow[uu,<-,crossing over, "j" near end]&& P_m,
\end{tikzcd}
\]
where $P$ denotes the pushout of the span above in $\pstrat$. 
We conclude that the pushout in $\pstrat$ is a stratified simplicial set by \cref{ReedyLemmaSets}(1), and that the pushout of $j$ is again a regular inclusion by \cref{ReedyLemmaSets}(2).
\end{proof}

\begin{rmk} 
We record two closure properties of $\strat$.
\label{rmkretracts}
\begin{enumerate}
    \item A retract in $\pstrat$ of an object in $\strat$ is again in $\strat$.
    \item The filtered colimit in $\pstrat$ of objects in $\strat$ is again in $\strat$. In particular, filtered colimits in $\strat$ and $\pstrat$ coincide.
\end{enumerate}
\end{rmk}

\begin{prop}
\label{cartesianclosed}
The categories $\pstrat$ and $\strat$ are cartesian closed, with internal homs given by the prestratified simplicial set
$$\maybethin{(Z^Y)}{m}:=\Hom_{\pstrat}(\Delta[m]_{(t)}\times Y,Z).$$
\end{prop}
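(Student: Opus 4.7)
My plan is to first establish the cartesian closure of $\pstrat$, which is automatic from presheaf theory, and then deduce the case of $\strat$ by checking that the internal hom of stratified objects is again stratified.

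For $\pstrat = \psh{t\Delta}$: every presheaf category is cartesian closed, and the formula for the internal hom is forced by the Yoneda lemma. Indeed, if $Z^Y$ is to be right adjoint to $-\times Y$, then
$$\maybethin{(Z^Y)}{m} = \Hom_{\pstrat}(\Delta[m]_{(t)}, Z^Y) \cong \Hom_{\pstrat}(\Delta[m]_{(t)}\times Y, Z),$$
recovering precisely the stated formula. A standard verification (using that products in $\pstrat$ are computed levelwise and that the Yoneda embedding is dense) shows this does define a right adjoint.

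For $\strat$: recall from the discussion at \cref{categoricalproperties} that limits in $\strat$ are computed in $\pstrat$, so for $Y \in \strat$ the product $X\times Y$ in $\strat$ coincides with that in $\pstrat$ whenever $X$ is stratified. Hence it is enough to show that whenever $Y, Z \in \strat$, the prestratified simplicial set $Z^Y$ already lies in $\strat$; the adjunction in $\pstrat$ will then restrict to one in $\strat$, via the chain
$$\Hom_{\strat}(X\times Y, Z) = \Hom_{\pstrat}(X\times Y, Z) \cong \Hom_{\pstrat}(X, Z^Y) = \Hom_{\strat}(X, Z^Y)$$
valid for $X \in \strat$.

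The main obstacle, then, is to verify that the structure map
$$\thin{(Z^Y)}{m} = \Hom_{\pstrat}(\Delta[m]_t\times Y, Z) \xrightarrow{(\varphi\times\id_Y)^*} \Hom_{\pstrat}(\Delta[m]\times Y, Z) = (Z^Y)_m$$
is injective when $Y, Z$ are stratified. The key observation is that since $\varphi \colon \Delta[m] \to \Delta[m]_t$ is entire (an isomorphism on underlying simplicial sets), so is $\varphi \times \id_Y$; and if $Z$ is stratified, any map $W\to Z$ in $\pstrat$ is completely determined by its underlying simplicial map $UW\to UZ$, the remaining data being the property that markings on $W$ are sent into $\thin{Z}{\bullet} \hookrightarrow Z_\bullet$. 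Concretely, if $f,g\colon\Delta[m]_t\times Y\to Z$ agree after restriction along $\varphi\times\id_Y$, then $Uf = Ug$, and naturality with respect to $\varphi$ together with injectivity of $\thin{Z}{k}\hookrightarrow Z_k$ forces $f$ and $g$ to agree on all marked simplices as well. This gives the required injectivity and completes the proof.
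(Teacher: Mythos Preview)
Your proof is correct and follows essentially the same approach as the paper: both reduce to showing that $Z^Y$ is stratified whenever $Z$ is, by observing that $\varphi\times\id_Y$ is an identity on underlying simplicial sets and that maps into a stratified $Z$ are determined by their underlying simplicial maps. The paper in fact states (and your argument also proves, though you only claim it for stratified $Y$) the slightly stronger fact that $Z^Y$ is stratified for \emph{any} prestratified $Y$ once $Z$ is stratified.
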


When it is useful to emphasize the ambient category, we write $\hom_{\strat}(Y,Z)$ or $\hom_{\pstrat}(Y,Z)$ for the internal hom $Z^Y$.

\begin{proof}
The fact that the above formula for $Z^Y$ yields the internal hom for the category of presheaves $\psh{t\Delta}$ is formal and a special case of \cref{presheafcartesianclosed}. Moreover, we will show that if $Z$ is a stratified simplicial set and $Y$ any prestratified simplicial set, then the internal hom $Z^Y$ of $\psh{t\Delta}$ is also a stratified simplicial set. Using this fact, it is easy to see that the same formula yields the internal hom for the category $\strat$ whenever $Y$ and $Z$ are stratified simplicial sets. The fact that the category $\strat$ is cartesian closed
is also mentioned in \cite[Observation 12 and Definition 59]{VerityComplicialI}.

To justify the claim, we need to check that for any stratified simplicial set $Z$ and for any $t\Delta$-set $Y$
the map
$\thin{(Z^Y)}{k}\to (Z^Y)_k$
induced by $\varphi\colon [k] \to [k]_t$ for $k\ge1$ is injective.
This map is precisely the map
\[
\Hom_{\pstrat}(\Delta[k]_{t}\times Y,Z) \to \Hom_{\pstrat}(\Delta[k]\times Y,Z)
\]
induced by precomposition with $\varphi \times \id\colon \Delta[k]\times Y\to \Delta[k]_{t}\times Y$. So we want to show that a lift of a map of presheaves 
$f\colon \Delta[k]\times Y \to Z$ to a map $\widetilde{f}\colon \Delta[k]_{t}\times Y \to Z$ is unique if it exists. Since the underlying simplicial sets of $\Delta[k]\times Y$ and $\Delta[k]_{t}\times Y$ are equal, we only need to check that the value of $\widetilde f$ on marked simplices is uniquely determined by $f$.
Moreover, since the marked simplices of $\Delta[k]\times Y$ and $\Delta[k]_{t}\times Y$ coincide in all dimensions except $k$, we only need to check that the value of $\widetilde f$ on $[k]_t$, namely the function
\[\widetilde{f}_{[k]_t}\colon(\Delta[k]_t\times Y)([k]_{t})\to Z([k]_{t}),\]
is uniquely determined by $f$. Since $Z$ is by assumption a stratified simplicial set, the structure map $\varphi_k\colon Z([k]_t)\to Z([k])$ is injective, so it is enough to check that the function
\[\varphi_k\circ\widetilde{f}_{[k]_t}\colon(\Delta[k]_t\times Y)([k]_{t})\to Z([k]_{t})\to Z([k]),\]
is uniquely determined by the $f$.
We conclude recalling that $\widetilde f$ is uniquely determined on the underlying simplicial set and observing that there is a commutative diagram of the following form
\[\begin{tikzcd}
(\Delta[k]_{t}\times Y)([k]_{t})\arrow[r, "{\widetilde{f}_{[k]_t}}"]  \arrow[d, "\varphi_k"swap]& Z([k]_{t})\arrow[d, hook, "\varphi_k"]\\
(\Delta[k]_{t}\times Y)([k]) \arrow[r, "{\widetilde{f}_k=f_k}"swap]& Z([k]).
\end{tikzcd}\]
\end{proof}

\begin{rmk}
\label{inclusionfullyfaithful}
The inclusion $\strat\hookrightarrow\psh{t\Delta}$ is fully faithful as a functor enriched over $\pstrat$, ie, it induces an isomorphism of prestratified simplicial sets
$$i\colon\hom_{\strat}(X,V)\cong\hom_{\pstrat}(X,V).$$
\end{rmk}

\begin{rmk}
\label{enrichedadjunction}
The reflector is a functor enriched over $\pstrat$, ie, it induces a map of prestratified simplicial sets 
$$R\colon\hom_{\pstrat}(Y,Z)\to i\hom_{\strat}(RY,RZ).$$
Moreover, the adjunction is enriched over $\pstrat$, ie, there are natural isomorphisms of prestratified simplicial sets 
$$i\hom_{\strat}(RY,X)\cong\hom_{\pstrat}(Y,X)$$
which are an easy consequence of the fact that $R$ commutes with products. 
\end{rmk}

\subsection{Precomplicial and complicial sets}

In this subsection, we describe which (pre)stratified simplicial sets should be thought of as $(\infty,n)$-categories, which in \cite{RiehlVerityBook} go under the name of ``$n$-(pre)complicial sets''. The condition that we require is having the right lifting property with respect to four classes of elementary anodyne maps.
For an account on the purpose and the intuition behind each of them, see the expository note \cite{EmilyNotes}.

The elementary anodyne maps involve the following stratified simplicial sets, which can also be found in \cite{EmilyNotes}, and partially in
\cite[Notation 10]{VerityComplicialI}\footnote{For historical reasons, Street and Verity first focused on stratified simplicial sets that had the right lifting properties with respect to the maps of type (1) and (2), and refer to them as ``weak $\omega$-categories'' \cite{StreetOrientedSimplexes} and ``weak complicial sets'' \cite{VerityComplicialI}, respectively. A weak complicial set that also has the right lifting property with respect to the maps of type (3) is called ``$n$-trivial''. Later on, Riehl introduced in \cite{EmilyNotes} the terminology ``saturated'' for $n$-trivial weak complicial sets that also have the right lifting property with respect to maps of type (4).}.

\begin{notn} 
\label{preliminarynotation}
We denote by 
\begin{enumerate}
    \item $\Delta^k[m]$, for $0\leq k \leq m$, the stratified simplicial set whose underlying simplicial set is $\Delta[m]$ and in which a non-degenerate simplex is marked if and only if it contains the vertices $\{k-1,k,k+1\}\cap [m]$.
    \item $\Delta^k[m]'$, for $0\leq k \leq m$, the stratified simplicial set obtained from $\Delta^k[m]$ by additionally marking the $(k-1)$-st and $(k+1)$-st face of $\Delta[m]$,
    \item $\Delta^k[m]''$, for $0\leq k \leq m$, the stratified simplicial set obtained from $\Delta^k[m]'$ by additionally marking the $k$-th face of $\Delta[m]$.
    \item $\Lambda^k[m]$, for $0\leq k \leq m$, the stratified simplicial set whose underlying simplicial set is the $k$-horn $\Lambda^k[m]$ and whose non-degenerate marked simplices consist of those simplices that are marked in $\Delta^k[m]$.
    \item $\eqDelta$ the stratified simplicial set whose underlying simplicial set is $\Delta[3]$, and whose non-degenerate marked simplices consist of all $2$- and $3$-simplices, as well as $1$-simplices $[02]$ and $[13]$.
\end{enumerate}
\end{notn}

We also make use of the join construction for stratified simplicial sets $\star\colon\strat\times\strat\to\strat$. The underlying simplicial set of this join construction agrees with Joyal's join construction of the underlying simplicial sets from \cite[\textsection6.6]{joyalnotes} or \cite[App.D.2]{RiehlVerityBook}, ie, given simplicial sets $A$ and $B$ the set of $k$-simplices of $A\star B$ is given by
\[(A\star B)_k=\coprod_{l=-1}^{k}(A_{l}\times B_{k-l-1}),\] 
following the convention that $A_{-1}=B_{-1}=*$.
 The stratification of the join construction can then be found in \cite[Observation 34]{VerityComplicialI} or 
 \cite[App.D.2]{RiehlVerityBook}. Explicitly, a $k$-simplex in the join $A\star B$ can be seen as a map $\alpha\star\beta\colon\Delta[l]\star \Delta[k-l-1] \to A\star B$ with $\alpha$ an $l$-simplex in $A$ and $\beta$ a $(k-l-1)$-simplex in $B$, with the convention $\Delta[-1]=\varnothing$; the simplex $\alpha\star\beta$ is then marked if and only if either $\alpha$ or $\beta$ is marked or both.
\begin{defn}
\label{anodynemaps}
Let $n\ge0$. An \emph{elementary anodyne extension} is one of the following.
 \begin{enumerate}
  \item  The \emph{complicial horn extension} from \cite[Definition 15]{VerityComplicialI}, ie, the regular inclusion
 $$\Lambda^k[m]\to \Delta^k[m]\text{ for $m\geq 1$ and $0\leq k\leq m$};$$
 \item The \emph{complicial thinness extension} from \cite[Definition 15]{VerityComplicialI}, ie, the entire inclusion 
$$\Delta^k[m]' \to \Delta^k[m]''\text{ for $m\geq 2$ and $0\leq k \leq m$};\footnote{We point out that the cases $k=0,m$ behave quite differently from the cases $0<k<m$. For instance, in the former there are marked $1$-simplices, namely $[0,1]$ or $[m-1,m]$, which are used to encode Joyal's special outer horn lifting property as explained in \cite[App. D.4]{RiehlVerityBook}.}$$
\item The \emph{$l$-th triviality extension} map,
ie, the entire inclusion
$$\Delta[l]\to \Delta[l]_t\text{ for $l>n$};$$
\item The \emph{saturation extension} from \cite[Definition 3.2.7]{EmilyNotes}, ie, the entire inclusion
$$\Delta[l]\star \Delta[3]_{eq}  \to \Delta[l]\star \Delta[3]^{\sharp}\text{ for $l\geq -1$}.$$
\end{enumerate}
We denote by $\Lambda_n$ the collection of all elementary anodyne extensions. 
\end{defn}

\begin{rmk}
An involved combinatorial argument shows that, in the presence of the maps of type (1),(2) and (3), requiring a right lifting property with respect to (4) is equivalent to requiring it with respect to the family of maps
$$\Delta[m]\star \Delta[3]_{eq} \star \Delta[l] \to \Delta[m]\star \Delta[3]^{\sharp}\star \Delta[l]\text{ for $l,m\geq -1$}$$
instead. We refer the reader to \cite[Appendix D]{RiehlVerityBook} for more details.
\end{rmk}

The following terminology is borrowed from \cite{RiehlVerityBook}.

\begin{defn}
For $n\ge0$, an \emph{$n$-complicial set} is a stratified simplicial set that has the right lifting property with respect to the elementary anodyne maps from \cref{anodynemaps}.
\end{defn}

Roughly speaking, a stratified simplicial set $W$ consists of a set of objects $W_0$, and for $k>0$ a set of $k$-morphisms $W_k$ and a set of $k$-equivalences $\thin{W}{k}$.
According to this intuition, lifting with respect to
\begin{enumerate}
    \item all complicial horn extensions guarantees that cells can be suitably composed;
    \item all complicial thinness extensions guarantees that any composite of equivalences is also one;
    \item $n$-triviality anodyne extensions guarantees that all cells in degree higher than $n$ are invertible;
    \item all saturation anodyne extensions guarantees that all equivalences are marked.
    
\end{enumerate}
In this sense, it is fair to regard an $n$-complicial set as an $(\infty,n)$-category.

\begin{rmk}
We point out that the parameter $n$ only appears in the triviality elementary anodyne extension. In particular, if $n\ge n'$ then $\Lambda_n\subset\Lambda_{n'}$ and any $n'$-complicial set (which is an $(\infty,n')$-category) is also an $n$-complicial set (which is an $(\infty,n)$-category).
Let us elaborate on what $n$-complicial sets recover for low values of $n$.
\begin{enumerate}
    \item[(0)] A $0$-complicial set is precisely a Kan complex endowed with the maximal stratification.
    \item A $1$-complicial set is a quasi-category in which the marked $1$-simplices are precisely the equivalences, and all simplices in degree higher than $1$ are marked; so $1$-complicial sets are essentially the same objects are Lurie's ``naturally marked quasi-categories'' from \cite{htt}.
    \item The underlying scaled simplicial set of a $2$–complicial set is a (weak) $\infty$–bicategory in the sense of \cite{LurieGoodwillie}, as shown by Gagna--Harpaz--Lanari in \cite{GHL}.
    \end{enumerate}
\end{rmk}

We also instead want to consider prestratified simplicial sets (which are not necessarily stratified) that satisfy the same lifting properties.

\begin{defn}
For $n\ge0$, an \emph{$n$-precomplicial set}\footnote{We warn the reader that Verity \cite{VerityComplicialAMS} uses the terminology ``pre-complicial'' to mean something different.
} is a prestratified simplicial set that has the right lifting property with respect to the elementary anodyne maps from \cref{anodynemaps}.
\end{defn}

In the next section we construct model structures for $n$-complicial and $n$-precomplicial sets.

\subsection{The model structure for $n$-complicial sets}

The model structure on $\strat$ for $n$-complicial sets was first claimed in \cite[Example 3.3.6]{EmilyNotes} as an instance of \cite[Theorem 100]{VerityComplicialI}. We provide the technical details of the proof.

The weak equivalences, which we call \emph{$\Lambda_n$-local equivalences}, are defined in terms of the set\footnote{We warn the reader that the index $n$ in $\Lambda_n$ does not refer to the $n$-fold iterated $\Lambda$-construction from \cite[\textsection 2.10]{Ara}.} $\Lambda_n$
of elementary anodyne maps, and in terms of the class of $\Delta[1]_t$-homotopy equivalences in $\strat$.

The notion of $\Delta[1]_t$-homotopy equivalence in $\pstrat$ is given in \cref{homotopyequivalence}, and agrees with the one given in \cite[Section 6.1]{VerityComplicialI} for maps that are in $\strat$.

\begin{defn}
A map of prestratified simplicial sets $f\colon X\to Y$ is a \emph{$\Lambda_n$-local equivalence} if and only if for any $n$-complicial set $Z$, the induced map on internal homs $f^*\colon \ihom{Y}{Z}\to \ihom{X}{Z}$ is a $\Delta[1]_t$-homotopy equivalence.
\end{defn}

\begin{thm}
\label{modelstructureonstrat}
For $n\ge0$, the category $\strat$ admits a cartesian, left proper, combinatorial 
model structure where
\begin{itemize}
\item the cofibrations are precisely the monomorphisms,
\item the fibrant objects are precisely the $n$-complicial sets, and
\item the weak equivalences are precisely the $\Lambda_n$-local weak equivalences.
\end{itemize}
We call this the \emph{model structure for $n$–complicial sets}.
\end{thm}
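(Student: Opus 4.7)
The plan is to invoke Verity's Theorem 100 from \cite{VerityComplicialI}, which produces a cartesian, left proper, combinatorial model structure on $\strat$ from any set of elementary anodyne extensions satisfying certain closure axioms under Leibniz products with monomorphisms. The resulting model structure has cofibrations the monomorphisms, fibrant objects those with the right lifting property against the anodyne extensions, and weak equivalences defined exactly as our $\Lambda_n$-local equivalences; so once the axioms are verified for $\Lambda_n$, the theorem delivers the model structure with the stated description. Left properness is automatic because every object is cofibrant, combinatoriality follows from $\strat$ being locally presentable as a reflective subcategory of $\pstrat = \psh{t\Delta}$, and cartesianness combines the Leibniz closure axiom with \cref{cartesianclosed}.

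The core of the proof is the verification of Verity's hypotheses for $\Lambda_n$. The critical axiom says that for every elementary anodyne map $f \in \Lambda_n$ and every generating cofibration $g \in I$ from \cref{generatingmonos}, the Leibniz product of $f$ and $g$ belongs to the weak saturation $\overline{\Lambda_n}$ in $\strat$. I would carry out this check case-by-case on the four types of anodyne extensions in \cref{anodynemaps}. The cases of the complicial horn extensions (1) and the complicial thinness extensions (2) are essentially due to Verity in \cite[\textsection 6]{VerityComplicialI}, and can be invoked directly. For the triviality extensions (3), the Leibniz product with either type of generating monomorphism is an entire inclusion that marks only simplices of dimension strictly greater than $n$, and admits a manifest decomposition into a finite pushout-tower of further $(3)$-type maps.

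The delicate case is (4), the saturation extensions $\Delta[l] \star \Delta[3]_{eq} \to \Delta[l] \star \Delta[3]^\sharp$. Here the plan is to filter the codomain of the Leibniz product by subcomplexes indexed by the non-degenerate cells of the generating cofibration's codomain, and to exhibit each filtration step as a pushout along either a further saturation extension of type (4), a horn extension of type (1), or a thinness extension of type (2). The reformulation of the saturation lifting property in terms of the family $\Delta[m] \star \Delta[3]_{eq} \star \Delta[l] \to \Delta[m] \star \Delta[3]^\sharp \star \Delta[l]$ noted immediately after \cref{anodynemaps} enlarges the toolbox and should make this inductive decomposition possible. Some care is needed to keep the pushouts inside $\strat$: for pushouts of regular inclusions this is guaranteed by \cref{RegularPushout}, while for entire inclusions (as in types (2), (3) and (4)) one must either check directly that the pushout is stratified or else apply the reflector $R$ to control any identifications of marked simplices.

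The main obstacle will be the combinatorial bookkeeping in case (4): producing an explicit cell-by-cell filtration of the Leibniz product and verifying at each stage that the attaching map is an elementary anodyne map of one of the allowed types. Once the four Leibniz-closure verifications are established, together with the standard check that $I$ forms a cellular model of the monomorphisms of $\strat$, Verity's Theorem 100 outputs exactly the cartesian, left proper, combinatorial model structure asserted, with fibrant objects the $n$-complicial sets and weak equivalences the $\Lambda_n$-local equivalences by construction.
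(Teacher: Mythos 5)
Your proposal follows essentially the same route as the paper: invoke Verity's Theorem 100, reduce its hypotheses to the Leibniz (pushout-product) condition for $\Lambda_n$ against the generating monomorphisms of \cref{generatingmonos}, quote Verity for the horn and thinness cases, dispatch the triviality case by marking the top-dimensional simplices via pushouts of further triviality extensions, and handle the saturation case by a filtration whose stages are pushouts along saturation and thinness extensions --- which is exactly what \cref{AnodyneTimesMonoStrat} and \cref{PPsaturationboundary,PPsaturationmarking,PPtrivialityboundary,PPtrivialitymarking} do, with left properness from cofibrancy of all objects and cartesianness from Verity's Observation 107. The only cosmetic difference is that the paper phrases Verity's condition (ii) as the internal hom $X^J\to X^I$ being an acyclic fibration (hence a $\Delta[1]_t$-homotopy equivalence) and derives this from the Leibniz condition by adjunction, whereas you state the Leibniz condition directly as the axiom; these are interchangeable.
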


The key fact to check is the following proposition. We recall that a $\Lambda_n$-anodyne extension in $\strat$ is a map in $\strat$ that can be written as a retract of a transfinite composition of pushouts of elementary $\Lambda_n$-elementary anodyne extensions.

\begin{prop} \label{AnodyneTimesMonoStrat}
For $n\ge0$, the pushout-product of a $\Lambda_n$-anodyne extension $I\to J$ and a monomorphism $K\hookrightarrow L$,
    $$(I\times L)\aamalg{I\times K}(J\times K)\to J\times L,$$
    is an anodyne extension in $\strat$.
\end{prop}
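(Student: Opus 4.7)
My plan is to reduce the statement to a finite case analysis via generators, and then to handle the resulting cases type by type. Since the class of $\Lambda_n$-anodyne extensions is by definition the saturation of $\Lambda_n$, and the class of monomorphisms in $\strat$ is the saturation of the set $I$ by \cref{generatingmonos}, and since the pushout-product functor $-\boxtimes-$ preserves colimits in each variable, a standard cell argument shows that it suffices to verify the conclusion when $f$ is one of the four types of elementary anodyne extensions in \cref{anodynemaps} and $g$ is one of the two generating monomorphisms in $I$. This produces eight families of maps to check.

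The cases where $f$ is a complicial horn extension (type 1) or a complicial thinness extension (type 2) should follow from Verity's verifications in \cite{VerityComplicialI}, since those two types generate Verity's original anodyne family and his pushout-product argument there is carried out against the full class of monomorphisms of stratified simplicial sets. The cases where $f$ is a triviality extension (type 3), namely $\Delta[l]\to\Delta[l]_t$ with $l>n$, are entire inclusions, so each pushout-product is entire; a direct simplex-by-simplex analysis shows that the pushout-product with $\Delta[p]\to\Delta[p]_t$ is in fact an isomorphism (every pair whose component lies in an entire extension is already forced to be thin on the domain side), while the pushout-product with $\partial\Delta[p]\to\Delta[p]$ is an isomorphism unless $p=l$, in which case exactly one new marking is introduced and it is produced by a single pushout of the triviality extension $\Delta[l]\to\Delta[l]_t$ itself, hence lies in $\Lambda_n$.

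The main obstacle is case (4), the pushout-product of a saturation extension $\Delta[l]\star\eqDelta\to\Delta[l]\star\Delta[3]^{\sharp}$ with a generating monomorphism, where the join interacts non-trivially with the product. The intended strategy is to invoke the enlarged family of saturation extensions $\Delta[m]\star\eqDelta\star\Delta[l]\to\Delta[m]\star\Delta[3]^{\sharp}\star\Delta[l]$ noted in the remark following \cref{anodynemaps}, which generates the same saturated class as $\Lambda_n$ once types (1)--(3) are available, and to exhibit the pushout-product as a transfinite composite of pushouts of maps from this enlarged family. This requires a filtration of $\Delta[p]\times(\Delta[l]\star\Delta[3])$, and similarly of $\Delta[p]_t\times(\Delta[l]\star\Delta[3])$, by stratified subcomplexes ordered so that each successive step attaches precisely the simplices associated with a single join decomposition; keeping track of which new simplices acquire the thin marking at each stage is the combinatorial crux of the argument, and I expect it to occupy the bulk of the proof.
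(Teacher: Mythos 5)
Your reduction to elementary anodyne extensions paired with generating monomorphisms is exactly the paper's first step, and the appeal to Verity's verifications for the complicial horn and thinness cases also matches the paper. The remaining cases, however, are where the problems lie.

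First, your analysis of the triviality case is wrong in both sub-cases, with the conclusions essentially swapped. For the pushout-product of $\Delta[l]\to\Delta[l]_t$ (with $l>n$) against $\Delta[p]\to\Delta[p]_t$, the simplex $(\id_{[l]},\id_{[p]})$ is marked in $\Delta[l]_t\times\Delta[p]_t$ but, when $p=l$, is \emph{not} marked in the domain $\Delta[l]\times \Delta[p]_t \aamalg{\Delta[l]\times \Delta[p]}\Delta[l]_t\times\Delta[p]$, since in each summand one coordinate is a non-degenerate top simplex of a minimally marked factor; so this pushout-product is not an isomorphism when $p=l$ and requires one pushout of $\Delta[l]\to\Delta[l]_t$ (cf.\ \cref{PPtrivialitymarking}). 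Conversely, for the pushout-product against $\partial\Delta[p]\hookrightarrow\Delta[p]$, the missing markings are the simplices $(\id_{[l]},\sigma_2)$ with $\sigma_2\colon[l]\to[p]$ surjective and degenerate; these exist precisely when $l>p$ (not when $p=l$, where the only surjection is the unmarked identity), and there are in general several of them, so one needs a pushout of a sum of copies of $\Delta[l]\to\Delta[l]_t$ (cf.\ \cref{PPtrivialityboundary}). Both slips are repaired by the very method you propose, so this part is correctable rather than fatal.

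The genuine gap is the saturation case, which is the combinatorial heart of the proposition and which you explicitly leave unexecuted. Asserting that a filtration by the two-sided family $\Delta[m]\star\eqDelta\star\Delta[l]\to\Delta[m]\star\Delta[3]^\sharp\star\Delta[l]$ ``should'' exist is not a proof: one must identify exactly which simplices of $(\Delta[l]\star\Delta[3]^{\sharp})\times\Delta[m]_{(t)}$ fail to be marked on the domain side and then produce attaching maps that are well defined as maps of \emph{stratified} sets, and that bookkeeping is where all the work is. Note also that the paper's own argument (\cref{PPsaturationboundary,PPsaturationmarking}) does not need the two-sided family at all: it classifies the missing marked simplices as $(\alpha_1\star\alpha_2,\beta)$ with $\alpha_2$ one of the edges of $\Delta[3]$, marks one batch by pushouts along the one-sided saturation extensions $\Delta[k-2]\star\eqDelta\to\Delta[k-2]\star\Delta[3]^{\sharp}$, and marks the remainder by complicial thinness extensions. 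Your route additionally leans on the equivalence with the two-sided family, which the paper itself only cites as ``an involved combinatorial argument'' deferred to Riehl--Verity, so even the reduction you invoke is not free. Until the filtration and the verification of the stratification compatibilities are actually carried out, the saturation case --- and hence the proposition --- is not established by this proposal.
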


\begin{proof}[Proof of \cref{AnodyneTimesMonoStrat}]
The proof of the proposition consists of the verifications of all possible cases of a generating monomorphism from \cref{generatingmonos}, with an elementary anodyne extension from \cref{anodynemaps}. 
When the elementary anodyne extension is a complicial horn or thinness anodyne extension, this was first done in \cite[Lemma 72 and Observation 74]{VerityComplicialI}, and a detailed argument is also now available in \cite[Proposition D.3.8]{RiehlVerityBook}, and partially in \cite{HenryWeakModel}. When the elementary anodyne extension is a triviality or saturation anodyne extension, this is proven in \cref{PPsaturationboundary,PPsaturationmarking,PPtrivialityboundary,PPtrivialitymarking}.
\end{proof}

We can now prove the theorem.

\begin{proof}[Proof of \cref{modelstructureonstrat}]
The existence of the desired cofibrantly generated model structure is an application of Verity's theorem \cite[Theorem 100]{VerityComplicialI} for model structures on $\strat$, which is in turn an instance of Smith's Theorem for the existence of model structures. 
In order to use Verity's theorem, we need to check that the set $\Lambda_n$ satisfies Conditions (i)-(ii) of \cite[Definition 91]{VerityComplicialI}.
\begin{enumerate}
\item[(i)\ ] First, we observe that the set $\Lambda_n$ contains by definition Verity's elementary anodyne maps, which are precisely the complicial horn inclusions and the complicial thinness extensions from \cref{anodynemaps}.
\item[(ii)\ ] Next, we show that for every map $ I \to J$ in $\Lambda_n$ and for every $n$-complicial set $X$, the induced map on internal homs
$X^J \to X^I$
is a $\Delta[1]_t$-homotopy equivalence. 
First, we claim that this map is an ``acyclic fibration'', ie, it has the right lifting property with respect to all monomorphisms. For this, we consider a lifting problem for a monomorphism $K\to L$:
\[
\begin{tikzcd}
K \arrow[r]\arrow[d, hook] & \ihom{J}{X} \arrow[d, "\alpha^*"]\\
L\arrow[r] \arrow[ur, dashed, "?"] & \ihom{I}{X}.
\end{tikzcd}
\]
By adjointness, the lifting problem is equivalent to the following one
\[
\begin{tikzcd}
K\times J \aamalg{K\times I} L\times I \arrow[r]\arrow[d, hook] & X. \\
L\times J  \arrow[ur, dashed, swap , "?"] &
\end{tikzcd}
\]
Given that the vertical arrow is a $\Lambda_n$-anodyne extension by \cref{AnodyneTimesMonoStrat} and  since $X$ is an $n$-complicial set, a lift exists. 
We conclude knowing that any acyclic fibration
between stratified simplicial sets is a $\Delta[1]_t$-homotopy
equivalence by \cite[Observation 90]{VerityComplicialI}.
\end{enumerate}
Given that every object is cofibrant, the model structure is left proper, and the fact that it is cartesian is by \cite[Observation 107]{VerityComplicialI}.
\end{proof}

\begin{rmk}
\begin{enumerate}
    \item The argument employed to show (ii) is standard and holds more generally in categories of presheaves as treated in \cref{cisinskiappendix} (cf~\cref{lemmalocalobjects}).
    \item The model structure for $n$-complicial sets built in \cref{modelstructureonstrat} coincides with the $\strat$-enriched Bousfield localization of the model structure for $n$-trivial (weak) complicial sets from \cite[Example 104]{VerityComplicialI}, with respect to the saturation anodyne extensions. The ingredients needed to recognize the fibrant objects in such localized model structure as the $n$-complicial sets are precisely the techniques from \cref{TechnicalResultsAnodyne}.
\end{enumerate}
\end{rmk}

\subsection{The model structure for $n$-precomplicial sets}

We now construct a Cisinski model structure on $\pstrat$ for $n$-precomplicial sets.

\begin{thm}
\label{modelstructureondiscretepresheaves}
For $n\ge0$, the category $\pstrat$ supports a cartesian model structure where 
\begin{itemize}
\item the cofibrations are precisely the monomorphisms,
\item the fibrant objects are precisely the $n$-precomplicial sets, and
\item the weak equivalences are precisely $\Lambda_n$-local weak equivalences.
\end{itemize}
\end{thm}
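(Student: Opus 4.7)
The plan is to realize this model structure as an instance of the Cisinski-style machinery for presheaf categories, developed in \cref{cisinskiappendix}. The required inputs are a cellular model, a functorial cylinder, and a set of generating anodyne extensions. I would take the set $I$ from \cref{generatingmonos} as cellular model; the product with $\Delta[1]_t$ as cylinder, whose structure maps come from the cartesian closure of \cref{cartesianclosed}; and the set $\Lambda_n$ of elementary anodyne extensions from \cref{anodynemaps}, regarded now as morphisms in $\pstrat$, as generating anodyne extensions.

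The central step is the analog of \cref{AnodyneTimesMonoStrat}: for every generating monomorphism $K\hookrightarrow L$ in $I$ and every elementary anodyne extension $I\to J$ in $\Lambda_n$, the pushout-product
$$(I\times L)\aamalg{I\times K}(J\times K)\to J\times L$$
computed in $\pstrat$ lies in the saturation of $\Lambda_n$. I would establish this by transferring the case-by-case combinatorial decompositions already performed for $\strat$. Crucially, both maps involved are between stratified objects, and pointwise products of stratified objects remain stratified, so each cellular decomposition is in principle the same sequence of pushouts of elementary anodyne extensions whether formed in $\strat$ or in $\pstrat$; applying \cref{RegularPushout} at each intermediate step where a regular inclusion appears guarantees that the two decompositions agree, and at the steps where only entire inclusions are glued, the decomposition in $\pstrat$ is controlled directly because entire inclusions are identities on underlying simplicial sets.

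Once the pushout-product axiom is in hand, \cref{cisinskiappendix} delivers a cofibrantly generated, cartesian-closed model structure on $\pstrat$ whose cofibrations are the monomorphisms and whose fibrant objects are precisely those with the right lifting property against $\Lambda_n$, which is the definition of an $n$-precomplicial set. The characterization of weak equivalences as $\Lambda_n$-local equivalences follows from the general Cisinski recipe, identifying weak equivalences as maps inducing $\Delta[1]_t$-homotopy equivalences on internal homs into every fibrant object; one additionally needs to verify that it suffices to test against $n$-complicial sets --- as in the definition of $\Lambda_n$-local equivalence --- rather than against all $n$-precomplicial fibrants, which can be arranged via the reflective adjunction $R\dashv i$ of \cref{categoricalproperties} together with \cref{enrichedadjunction}, which sends a fibrant object of $\pstrat$ to an $n$-complicial set in a way compatible with internal homs.

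I expect the principal obstacle to be the pushout-product verification. Although the combinatorial template from \cref{AnodyneTimesMonoStrat} in $\strat$ is available, the transfer to $\pstrat$ is not automatic: one must check that at each step of a cellular decomposition, either a regular inclusion is being pushed out (so that \cref{RegularPushout} keeps us inside $\strat$ and the two pushouts coincide) or else the step involves a purely entire operation on a stratified object, where the $\strat$- and $\pstrat$-pushouts can be identified directly. The entire anodyne extensions of types (2)--(4) in \cref{anodynemaps} are where this bookkeeping demands the most care.
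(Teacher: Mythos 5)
Your overall architecture matches the paper's: invoke the Cisinski machinery of \cref{cisinskiappendix} with cylinder $\Delta[1]_t\times-$ and generating anodyne extensions $\Lambda_n$, reduce everything to pushout-product statements, and import the combinatorics already done in $\strat$. The gap is in your transfer mechanism from $\strat$ to $\pstrat$. You claim that at each step of a cellular decomposition the pushout in $\pstrat$ can be identified with the pushout in $\strat$ --- via \cref{RegularPushout} for regular inclusions and ``directly'' for entire ones. The second half of this claim is false. When an entire elementary anodyne extension (thinness, triviality, or saturation --- i.e.\ all of types (2)--(4) in \cref{anodynemaps}) is pushed out in $\pstrat$ along an attaching map into a stratified simplicial set, the newly imposed markings can land on simplices that are already marked in the target, producing a doubly labelled simplex; the resulting $\pstrat$-pushout is then not stratified and genuinely differs from the $\strat$-pushout. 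This is exactly the phenomenon of \cref{counterexamplepushout}, and it occurs in the actual decompositions (for instance, the saturation extension $\Delta[l]\star\eqDelta\to\Delta[l]\star\Delta[3]^{\sharp}$ re-marks simplices such as those over $[02]$ and $[13]$ that are already marked downstream). So the two decompositions do \emph{not} agree step by step, and your argument does not show that the $\strat$-anodyne pushout-product is $\Lambda_n$-anodyne in $\pstrat$.

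The missing idea is the paper's \cref{retractlemma}: for any monomorphism $f\colon A\to B$ in $\pstrat$ with $A$ stratified, the reflected map $A\cong RA\to RB$ is a \emph{retract} of $f$, via an explicitly constructed section $RB\to B$ that chooses preferred labels compatibly with the markings coming from $A$ and with the distinguished markings of degenerate simplices. Applied to the pushout of an elementary anodyne extension, this shows the $\strat$-pushout is a retract of the $\pstrat$-pushout; since anodyne classes are closed under retracts, every $\Lambda_n$-anodyne extension in $\strat$ is $\Lambda_n$-anodyne in $\pstrat$ (\cref{anodyneinStratisanodyneinpStrat}) even though the two pushouts disagree. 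With that lemma in place, the rest of your outline goes through essentially as in the paper: \cref{AnodyneTimesMonoStrat} plus the transfer gives \cref{anodyneproperties} and hence the model structure via \cref{cisinskimodelstructure}, and the cartesian property follows from the full pushout-product statement (the paper cites \cite[Prop.~2.7]{Ara} for this reduction). Your closing point about testing weak equivalences against $n$-complicial rather than all $n$-precomplicial sets is legitimate and is handled in the paper by the acyclic fibration $X\to iRX$ of \cref{unittrivialfibration} together with \cref{lemmalocalobjects}\itemref{fibrancyexponential}.
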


The key fact to check is that $\Lambda_n$ interacts well with $\Delta[1]_t$ in the sense of the following proposition.

We recall that an \emph{anodyne extension} in $\pstrat$ is a map that can be expressed as a retract of a transfinite composition of pushouts in $\pstrat$. Given that colimits in $\pstrat$ and in $\strat$ differ in general, for a map of stratified simplicial sets being anodyne in $\strat$ or in $\pstrat$ is not a priori the same requirement.

\begin{prop}
\label{anodyneproperties}
The elementary anodyne extensions from \cref{anodynemaps} generate a class of anodyne extensions in $\pstrat$ relative to $\Delta[1]_t$ in the sense of \cref{anodynegenerate}, ie,
\begin{enumerate}
    \item the pushout-product of either inclusion $\Delta[0]\hookrightarrow\Delta[1]_t$ and a monomorphism $K\hookrightarrow L$ and,
    $$(\Delta[0]\times L)\aamalg{\Delta[0]\times K}(\Delta[1]_t\times K)\to \Delta[1]_t\times L,$$
    is an anodyne extension, and
    \item the pushout-product of the inclusion $\partial\Delta[1]\hookrightarrow\Delta[1]_t$ and an elementary anodyne extension $I\hookrightarrow J$,
    $$(I\times\Delta[1]_t)\aamalg{I\times\partial\Delta[1]}(J\times\partial\Delta[1])\to J\times\Delta[1]_t,$$
    is an anodyne extension.
\end{enumerate}
\end{prop}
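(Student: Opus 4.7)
The strategy is to reduce both statements to case-checks on the generators and then to invoke \cref{AnodyneTimesMonoStrat} together with the techniques used in its proof (in particular, the triviality and saturation pushout-product lemmas \cref{PPsaturationboundary,PPsaturationmarking,PPtrivialityboundary,PPtrivialitymarking} cited there), bridging between $\strat$ and $\pstrat$ via \cref{RegularPushout}. Throughout, we use that anodyne extensions in $\pstrat$ are closed under composition, cobase change, retracts, and transfinite compositions, and that pushout-products distribute over compositions in one variable up to further pushouts.

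For part (1), by closure of the anodyne class and the fact that the class of monomorphisms is generated by the set $I$ of \cref{generatingmonos}, it suffices to verify the claim when $K\hookrightarrow L$ is one of the generating monomorphisms $\partial\Delta[m]\hookrightarrow\Delta[m]$ or $\Delta[m]\hookrightarrow\Delta[m]_t$. We factor each endpoint inclusion as the composite $\Delta[0]\hookrightarrow\Delta[1]\hookrightarrow\Delta[1]_t$, where the first factor is regular and the second is entire. This reduces the verification to four combinations of (regular/entire) cylinder inclusion against (boundary/marking) generating monomorphism. The regular-times-boundary combination yields the standard prism decomposition of $\Delta[1]\times\Delta[m]$ relative to $\Delta[1]\times\partial\Delta[m]\cup\{0\}\times\Delta[m]$, expressed as a finite composition of pushouts of complicial horn anodyne extensions with thinness labels corrected by triviality and saturation extensions. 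The three remaining (entire-inclusion) combinations are exactly of the type treated by the triviality and saturation pushout-product lemmas cited above.

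For part (2), we argue case-by-case on the type of the elementary anodyne extension $I\hookrightarrow J$. When $I\hookrightarrow J$ is a complicial horn extension (which is regular), we factor the cylinder boundary as $\partial\Delta[1]\hookrightarrow\Delta[1]\hookrightarrow\Delta[1]_t$. The pushout-product then decomposes into: first, a pushout-product of two regular inclusions, for which \cref{RegularPushout} identifies pushouts in $\pstrat$ with pushouts in $\strat$, so \cref{AnodyneTimesMonoStrat} applies directly; and second, an entire-inclusion pushout-product handled as in part (1). For the three remaining cases (complicial thinness, triviality, and saturation extensions), the map $I\hookrightarrow J$ is itself entire, so the resulting pushout-product is entire over the common underlying simplicial set $J\times\Delta[1]$, and the new markings are introduced by a finite filtration of pushouts of triviality and saturation elementary anodyne extensions.

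The main obstacle is the combinatorial bookkeeping in the entire-inclusion cases: one must order the newly marked simplices in each pushout-product so that, at every stage of the filtration, the simplex being marked satisfies the hypothesis of either a triviality extension (living in degree strictly greater than $n$) or a saturation extension (exhibited as the distinguished edge of a $\Delta[3]_{eq}$-configuration whose other marked data are already present). This is essentially the same combinatorics that appears for \cref{AnodyneTimesMonoStrat} in $\strat$; the additional subtlety is that we now work in $\pstrat$, so intermediate prestratified simplicial sets in the filtration may carry multiple marking labels on the same underlying simplex. This causes no difficulty, since the elementary anodyne extensions and their pushouts in $\pstrat$ remain available and perform the required work.
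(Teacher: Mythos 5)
Your proposal has two genuine gaps, and both are avoidable: the paper's own proof is a two-line application of \cref{AnodyneTimesMonoStrat} followed by \cref{anodyneinStratisanodyneinpStrat}.

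The first gap is the factorization $\Delta[0]\hookrightarrow\Delta[1]\hookrightarrow\Delta[1]_t$ in part (1). For $n\ge1$ neither factor has anodyne pushout-products, so showing the four combinations separately cannot work. Concretely, $(\Delta[0]\to\Delta[1])\hat\times(\partial\Delta[0]\to\Delta[0])$ is the map $\Delta[0]\to\Delta[1]$ with minimal stratification, which is not a $\Lambda_n$-anodyne extension (it is not even a weak equivalence: the walking arrow is not contractible), and the "standard prism decomposition" for an \emph{unmarked} cylinder coordinate requires outer complicial horns $\Lambda^0[m+1]\to\Delta^0[m+1]$ whose mandatory marked edge $[01]$ is absent in $\Delta[1]^\flat\times\Delta[m]$. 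Likewise $\Delta[1]\to\Delta[1]_t$ is a triviality extension only when $n=0$, so \cref{PPtrivialityboundary,PPtrivialitymarking} (which require $l>n$) do not cover the entire-factor combinations; indeed $(\Delta[1]\to\Delta[1]_t)\hat\times(\partial\Delta[0]\to\Delta[0])=\Delta[1]\to\Delta[1]_t$ is not anodyne for $n\ge1$. The correct observation, which the paper makes, is that $\Delta[0]\hookrightarrow\Delta[1]_t$ is \emph{itself} a complicial horn extension $\Lambda^k[1]\to\Delta^k[1]$ for $k=0,1$, so \cref{AnodyneTimesMonoStrat} applies to it directly against an arbitrary monomorphism, with no factorization needed.

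The second gap is the passage from $\strat$ to $\pstrat$. \cref{RegularPushout} only identifies pushouts of \emph{regular} inclusions in the two categories; the thinness, triviality and saturation extensions are entire, and their pushouts in $\pstrat$ genuinely differ from those in $\strat$ (cf.\ \cref{counterexamplepushout}). Your closing claim that running the marking filtration in $\pstrat$ "causes no difficulty" because the pushouts "perform the required work" is exactly where the argument breaks: a pushout in $\pstrat$ of an entire elementary anodyne extension along a map whose image simplex is already marked creates a second label, so the filtration does not terminate at the stratified target $J\times L$ but at a prestratified object lying over it. The paper resolves this with \cref{anodyneinStratisanodyneinpStrat}, whose proof (via \cref{retractlemma}) exhibits the $\strat$-pushout as a retract of the $\pstrat$-pushout; you need to invoke that result, or reprove the retract argument, to transfer anodyneness from $\strat$ to $\pstrat$.
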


\begin{proof}[Proof of \cref{anodyneproperties}]
We verify the conditions (1)-(2).
\begin{enumerate}
    \item We first observe that the pushout-product of either inclusion $\Delta[0]\hookrightarrow\Delta[1]_t$ (which is in particular a complicial horn inclusion) and any monomorphism $K\hookrightarrow L$ is an anodyne extension in $\strat$ by \cref{AnodyneTimesMonoStrat}. 
    By \cref{anodyneinStratisanodyneinpStrat} we conclude that the same map is also an anodyne extension in $\pstrat$.
    \item We know from \cref{AnodyneTimesMonoStrat} that the pushout-product of the inclusion $\partial\Delta[1]\hookrightarrow\Delta[1]_t$ and a generating anodyne map is an anodyne extension in $\strat$. 
    By \cref{anodyneinStratisanodyneinpStrat} we conclude that the same map is also an anodyne extension in $\pstrat$.
    \qedhere
    \end{enumerate}
\end{proof}

We can now prove the theorem.

\begin{proof}[Proof of \cref{modelstructureondiscretepresheaves}]
The existence of the desired model structure is an application of \cref{cisinskimodelstructure} for model structures on $\psh{t\Delta}$. In order to use the theorem, we only need to know that the class $\Lambda_n$ generates a class of anodyne extensions relative to $\Delta[1]_t$, which was proven in \cref{anodyneproperties}.

We now argue that the model structure is cartesian. By \cite[Prop. 2.7]{Ara} it is enough to show that
the pushout-product of any anodyne extension $I\to J$ in $\psh{t\Delta}$ and any monomorphism $K\to L$ in $\psh{t\Delta}$,
$$(I\times L)\aamalg{I\times K}(J\times K)\to J\times L,$$
is an anodyne extension in $\psh{t\Delta}$. 
Without loss of generality, we can assume that $I\to J$ is an elementary anodyne extension from \cref{anodynemaps} and $K\to L$ is a generating monomorphism, as in \cref{generatingmonos}; in particular both maps, as well as their pushout-product, live in $\strat$. By \cref{retractlemma}, it is enough to show that the pushout-product is an anodyne extension in $\strat$ (rather than in $\psh{t\Delta}$).
The conclusion now follows from \cref{AnodyneTimesMonoStrat}.
\end{proof}

\begin{rmk}
\label{generalcomments}
There are strictly more $n$-precomplicial sets than $n$-complicial sets. However, a stratified simplicial set is fibrant if and only if it is an $n$-complicial set.
\end{rmk}

\subsection{The Quillen equivalence}
\label{The Quillen equivalence of pStrat and Strat}

We can now show that the model structure for $n$-complicial sets and that for $n$-precomplicial sets are Quillen equivalent.

\begin{prop}
\label{Quillenpair}
The reflector $R$ preserves cofibrations and weak equivalences. In particular, the adjunction
$$R\colon\pstrat\rightleftarrows\strat\colon i$$
is a Quillen pair between the model structures for $n$-precomplicial sets and $n$-complicial sets.
\end{prop}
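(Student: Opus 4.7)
The plan is to verify separately that $R$ preserves monomorphisms and that it preserves $\Lambda_n$-local equivalences. Since every object of $\pstrat$ is cofibrant in its model structure, the Quillen pair statement follows at once from these two facts.

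For cofibrations, I use the explicit levelwise description of $R$ given just after \cref{counterexamplepushout}. Given a monomorphism $f\colon X\hookrightarrow Y$ in $\pstrat$, the map $f$ is pointwise injective, so $(Rf)_n=f_n$ is injective, and $\thin{(Rf)}{n}$ is the restriction of the injection $f_n$ to the subset $\im(\thin{X}{n}\to X_n)\hookrightarrow\im(\thin{Y}{n}\to Y_n)$, which remains injective.

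For weak equivalences, let $f\colon X\to Y$ be a $\Lambda_n$-local equivalence in $\pstrat$ and let $V$ be any $n$-complicial set. I must show that the induced map $(Rf)^*\colon\hom_{\strat}(RY,V)\to\hom_{\strat}(RX,V)$ is a $\Delta[1]_t$-homotopy equivalence in $\strat$. The enriched adjunction from \cref{enrichedadjunction} provides natural isomorphisms of prestratified simplicial sets $i\hom_{\strat}(R(-),V)\cong\hom_{\pstrat}(-,iV)$, under which $i((Rf)^*)$ is identified with $f^*\colon\hom_{\pstrat}(Y,iV)\to\hom_{\pstrat}(X,iV)$.

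The key intermediate observation is that $iV$ is itself an $n$-precomplicial set: every elementary anodyne extension from \cref{anodynemaps} is a map between stratified simplicial sets, and the full faithfulness of $i$ (\cref{inclusionfullyfaithful}) turns a lifting problem in $\pstrat$ against $iV$ into a lifting problem in $\strat$ against $V$, which admits a solution since $V$ is $n$-complicial. Consequently, $f^*$ above is a $\Delta[1]_t$-homotopy equivalence in $\pstrat$. Since $i$ is fully faithful and $\strat$ is closed under products in $\pstrat$, the interval object $\Delta[1]_t$ and its homotopy data transfer verbatim to $\strat$, so $(Rf)^*$ is a $\Delta[1]_t$-homotopy equivalence in $\strat$ as required. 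No serious obstruction arises; the argument is formal once one has the enriched adjunction and the full faithfulness of $i$ in hand.
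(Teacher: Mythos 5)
Your proof is correct and follows essentially the same route as the paper's: check preservation of monomorphisms from the explicit levelwise formula for $R$, and for weak equivalences use the enriched adjunction of \cref{enrichedadjunction} to identify $(Rf)^*$ against an $n$-complicial set $V$ with $f^*$ against $iV$, together with the observation that $iV$ is $n$-precomplicial. The extra details you supply (the lifting argument showing $iV$ is $n$-precomplicial, and the transfer of $\Delta[1]_t$-homotopy data along the fully faithful, product-preserving inclusion) are points the paper leaves implicit, and they are correct.
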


\begin{proof}
The fact that the reflector $R$ preserves cofibrations is straightforward from the explicit description of $R$. We now prove that the reflector $R$ respects weak equivalences, too.

If $X\to Y$ is a weak equivalence in $\pstrat$, then $\ihom{Y}{W}\to\ihom{X}{W}$
is a $\Delta[1]_t$-homotopy equivalence in $\pstrat$ for any $n$-precomplicial set $W$, and in particular for any $n$-complicial set $W$. Using the fact that the adjunction is enriched from \cref{enrichedadjunction}, we obtain  that
$\ihom{RY}{W}\to\ihom{RX}{W}$
is a homotopy equivalence in $\pstrat$ for any $n$-complicial set $W$, and therefore $RX\to RY$ is a weak equivalence in $\strat$.
\end{proof}

\begin{prop}
\label{reflectorrespectsfibrant}
The reflector respects fibrant objects.
\end{prop}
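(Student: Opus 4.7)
The plan is to show that if $X$ is an $n$-precomplicial set, then $RX$---which is automatically stratified by construction---has the right lifting property in $\strat$ against every elementary anodyne extension $\iota\colon I\to J$ of \cref{anodynemaps}, and hence is $n$-complicial. Given a lifting problem with map $f\colon I\to RX$ in $\strat$, I regard it via the fully faithful inclusion $i\colon\strat\hookrightarrow\pstrat$ of \cref{inclusionfullyfaithful} as a lifting problem in $\pstrat$ with target $iRX$. The strategy is to factor $f$ through the unit $\eta_X\colon X\to iRX$ by a lift $f^\flat\colon iI\to X$, extend it along $i\iota$ to $\widetilde{f^\flat}\colon iJ\to X$ using the precompliciality of $X$, and postcompose with $\eta_X$; the full faithfulness of $i$ then transports the resulting extension back to the desired lift in $\strat$.

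The key step is constructing $f^\flat$. The unit $\eta_X$ is the identity on underlying simplicial sets, and its marking component at level $m$ is the surjection $\varphi\colon\thin{X}{m}\twoheadrightarrow\thin{iRX}{m}=\im(\thin{X}{m}\to X_m)$. Since $I$ is stratified, $\thin{I}{m}$ embeds into $I_m$ and splits into degenerately and non-degenerately marked simplices. I set $f^\flat_\bullet=f_\bullet$ on underlying simplicial sets; for a non-degenerately marked $\sigma\in\thin{I}{m}$, I choose an arbitrary preimage $f^\flat(\sigma)\in\varphi^{-1}(f(\sigma))\subseteq\thin{X}{m}$; for a degenerately marked $\sigma=s^i\tau$, naturality with respect to $\zeta^i$ forces $f^\flat(\sigma)=\zeta^i(f(\tau))$.

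The main obstacle is checking that $f^\flat$ is a well-defined morphism in $\pstrat$. The only subtlety is that a single degenerate simplex may be presented multiply as $\sigma=s^i\tau_1=s^j\tau_2$, and the forced values $\zeta^i(f(\tau_1))$ and $\zeta^j(f(\tau_2))$ in $\thin{X}{m}$ must coincide. By the Eilenberg--Zilber lemma, any such coincidence in a simplicial set is generated by the simplicial identity $s^is^{j-1}=s^js^i$ (for $i<j$), and the additional relation $s^k\zeta^{l+1}=s^l\zeta^k$ of $t\Delta$ from \cref{tDelta}, upon evaluation at the presheaf $X$, yields precisely $\zeta^j(s^iy)=\zeta^i(s^{j-1}y)$ for $y\in X_{m-1}$---exactly the identity required. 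Once $f^\flat$ exists, the remaining steps (extension via the anodyne lifting property of $X$ in $\pstrat$ and transfer back to $\strat$ via full faithfulness of $i$) are routine.
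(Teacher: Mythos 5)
Your proof is correct and follows essentially the same route as the paper: lift the given map $I\to RX$ through the unit $X\to iRX$, extend the lift along $I\to J$ using the fibrancy of $X$ in $\pstrat$, and compose back with the unit, transporting the result to $\strat$ by full faithfulness of $i$. The only difference is in how the lift through the unit is produced: the paper gets it abstractly from \cref{unittrivialfibration} (the unit is an acyclic fibration, hence lifts against the cofibration $\Delta[-1]\to iI$), whereas you build $f^\flat$ by hand, and your well-definedness check for simplices that are degenerate in several ways---via the relation $s^k\zeta^{l+1}=s^l\zeta^k$---is exactly the computation the paper carries out in its proof of \cref{retractlemma}.
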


The proof makes use of the following preliminary lemma.

\begin{lem}
\label{unittrivialfibration}
For any prestratified simplicial set $X$ the unit $X\to iRX$ is an acyclic fibration.
\end{lem}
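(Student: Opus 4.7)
The plan is to prove that the unit $\eta\colon X\to iRX$ has the right lifting property with respect to the generating set of monomorphisms $I$ from \cref{generatingmonos}. Since $I$ generates the class of all monomorphisms in $\pstrat$, and since cofibrations in the model structure of \cref{modelstructureondiscretepresheaves} are precisely the monomorphisms, this will immediately give that $\eta$ is an acyclic fibration (a map with the right lifting property against all cofibrations is both a fibration and a weak equivalence in a cofibrantly generated model structure of this type).

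First I will recall the explicit pointwise formula for the reflector from \cref{categoricalproperties}: $(RX)_m=X_m$ and $\thin{RX}{m}=\im(\thin{X}{m}\to X_m)$. It follows that the component of the unit at each object is
\[\eta_{[m]}=\id\colon X_m\to X_m, \qquad \eta_{[m]_t}\colon\thin{X}{m}\twoheadrightarrow\thin{RX}{m}\hookrightarrow X_m,\]
where the first map of the second display is the canonical surjection onto the image.

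For a lifting problem of the form
\[
\begin{tikzcd}
\partial\Delta[m] \arrow[r] \arrow[d, hook] & X \arrow[d, "\eta"]\\
\Delta[m] \arrow[r, "\alpha"] \arrow[ur, dashed] & iRX,
\end{tikzcd}
\]
the bottom map corresponds to an element $\alpha\in(iRX)_m=X_m$, and, since $\eta$ is the identity on $X_m$ and on the entire underlying simplicial set, the element $\alpha$ viewed in $X_m$ provides the required lift directly.

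For a lifting problem against $\Delta[m]\hookrightarrow\Delta[m]_t$, the data amounts to $\alpha\in X_m$ together with $\beta\in\thin{iRX}{m}=\im(\thin{X}{m}\to X_m)$ having the same image in $X_m=(iRX)_m$. By the very definition of $\thin{RX}{m}$, we may choose a preimage $\widetilde{\beta}\in\thin{X}{m}$ of $\beta$ under the surjection $\thin{X}{m}\twoheadrightarrow\thin{RX}{m}$; this $\widetilde{\beta}$ defines a map $\Delta[m]_t\to X$ whose restriction to $\Delta[m]$ equals $\alpha$ and whose image under $\eta$ equals the given $\beta$, so both triangles commute. I do not anticipate any substantive obstacle: the argument reduces entirely to unwinding the pointwise formula for $R$ against the explicit generating cofibrations.
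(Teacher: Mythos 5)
Your proof is correct and follows essentially the same route as the paper: reduce to the two families of generating monomorphisms, observe that the unit is an identity on underlying simplicial sets (which handles the boundary inclusions), and for $\Delta[m]\hookrightarrow\Delta[m]_t$ choose a preimage under the surjection $\thin{X}{m}\twoheadrightarrow\im(\thin{X}{m}\to X_m)=\thin{RX}{m}$. The only cosmetic difference is that the paper transfers the first lifting problem to $\sset$ via the adjunction $(-)^\flat\dashv U$, whereas you solve it directly; both are instances of the same observation.
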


\begin{proof}[Proof of \cref{unittrivialfibration}]
We prove that for any presheaf $X$ the unit $X\to iRX$ lifts against all cofibrations, which are precisely the monomorphisms.
By \cref{generatingmonos}, it suffices to show that the unit lifts against each of the generating monomorphisms 
$$\partial\Delta[m]^{\flat}\hookrightarrow\Delta[m]^{\flat}\text{ and }\Delta[m]\hookrightarrow\Delta[m]_t.$$

First, consider a lifting problem in $\pstrat$ of the form 
\[
\begin{tikzcd}
\partial \Delta[m]^\flat \arrow[r]\arrow[d, hook] & X \arrow[d]\\
\Delta[m]^\flat \arrow[r] \arrow[ur, dashed, "?"] & iRX
\end{tikzcd}
\]
for $m\geq 0$.
Since $(-)^{\flat}$ is left adjoint to the forgetful functor $U$, this lifting problem is equivalent to the following one in $\sset$
\[
\begin{tikzcd}
\partial \Delta[m] \arrow[r]\arrow[d, hook] & UX \arrow[d,"\cong"]\\
\Delta[m] \arrow[r] \arrow[ur, dashed, "?"] & UiRX,
\end{tikzcd}
\]
which admits a solution. 

Next, consider a lifting problem in $\pstrat$ of the form
\[
\begin{tikzcd}
 \Delta[m] \arrow[r, "g"]\arrow[d, hook] & X \arrow[d]\\
\Delta[m]_t \arrow[r] \arrow[ur, dashed, "?"] & iRX,
\end{tikzcd}
\]
for $m\ge1$.
The data of this commutative square
is precisely an $m$-simplex $\sigma\in X_m$ in the underlying simplicial set of $X$ that is marked as an $m$-simplex of $RX$, namely $\sigma\in t(RX)_m$, and the lift exists if and only if there exists a marked simplex $\tilde\sigma\in tX_m$ that is mapped to $\sigma$ by the structure map $tX_m\to X_m$.
By definition of $R$, the map
$$tX_m\twoheadrightarrow\im(\varphi\colon \thin{X}{m}\to X_m)=\thin{(iRX)}{m}$$
is surjective, and an elementary diagram chase shows that any preimage $\tilde\sigma\in tX_m$ of $\sigma$ gives the desired solution.
\end{proof}

We can now prove the proposition.

\begin{proof}[Proof of \cref{Quillenpair}]
Let $A$ be a fibrant object in $\pstrat$.
In order to show the fibrancy of $RA$ in $\strat$, we need to solve a lifting problem of the following form
\[
\begin{tikzcd}
J \arrow[d, hook]\arrow[r]& RA\\
J'\arrow[ur, dashed, "?" swap]&
\end{tikzcd}
\]
for $J\hookrightarrow J'$ any elementary anodyne extension in $\strat$ (and thus in $\pstrat$).
By reading the diagram in $\pstrat$ instead, we look at a lifting problem as follows
\[
\begin{tikzcd}
iJ \arrow[d, hook]\arrow[r]& iRA.\\
iJ'\arrow[ur, dashed, "?" swap]&
\end{tikzcd}
\]
We add to the diagram the unit of $A$, which is an acyclic fibration by \cref{unittrivialfibration}, obtaining
\[
\begin{tikzcd}
& A \arrow[d, twoheadrightarrow, "\eta"]\\
iJ \arrow[d, hook]\arrow[r]& iRA.  \\
iJ'\arrow[ur, dashed, "?" swap] & 
\end{tikzcd}
\]
Since $iJ$ is cofibrant, we find a lift $iJ\to A$ as follows
\[
\begin{tikzcd}
& A \arrow[d, twoheadrightarrow, "\eta"]\\
iJ \arrow[d, hook]\arrow[ur]\arrow[r]& iRA.  \\
iJ'\arrow[ur, dashed, "?" swap] & 
\end{tikzcd}
\] 
Since the inclusion $iJ\to iJ'$ is an acyclic cofibration, we find a further lift $iJ'\to A$ in $\pstrat$
\[
\begin{tikzcd}
& A \arrow[d, twoheadrightarrow, "\eta"]\\
iJ \arrow[d, hook]\arrow[ur]\arrow[r]& iRA.  \\
iJ'\arrow[ur, dashed, "?" swap] \arrow[uur, dotted]& 
\end{tikzcd}
\]
By composing with the unit, we obtain the desired lift.
\end{proof}

\begin{prop}
\label{inclusioncreatesequivalences}
The inclusion creates, namely reflects and preserves, weak equivalences.
\end{prop}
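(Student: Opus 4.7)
The plan is to prove the two directions (reflecting and preserving) separately, leveraging the left Quillen functor $R$ together with the structural results already established.

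For the reflecting direction, suppose $f\colon A \to B$ is a map in $\strat$ such that $if$ is a weak equivalence in $\pstrat$. Since $R$ preserves weak equivalences by \cref{Quillenpair}, $R(if)$ is a weak equivalence in $\strat$. Because $i$ is fully faithful, the counit of the adjunction $R \dashv i$ is a natural isomorphism $Ri \cong \id_{\strat}$; under this isomorphism $R(if)$ corresponds to $f$, forcing $f$ to be a weak equivalence in $\strat$.

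For the preserving direction, take a weak equivalence $f\colon A\to B$ in $\strat$ and an $n$-precomplicial set $W$; we want to conclude that $(if)^*\colon \hom_{\pstrat}(iB,W)\to\hom_{\pstrat}(iA,W)$ is a $\Delta[1]_t$-homotopy equivalence. The key move is to replace $W$ by $iRW$ via the unit $\eta_W\colon W\to iRW$, which is an acyclic fibration by \cref{unittrivialfibration} and whose target is an $n$-precomplicial set because $RW$ is an $n$-complicial set by \cref{reflectorrespectsfibrant}. Naturality of $\eta$ yields the commutative square
\[\begin{tikzcd}
\hom_{\pstrat}(iB,W) \arrow[r,"(if)^*"] \arrow[d,"(\eta_W)_*"'] & \hom_{\pstrat}(iA,W) \arrow[d,"(\eta_W)_*"] \\
\hom_{\pstrat}(iB,iRW) \arrow[r,"(if)^*"'] & \hom_{\pstrat}(iA,iRW).
\end{tikzcd}\]
By the enriched adjunction of \cref{enrichedadjunction}, the bottom row corresponds under $i$ to $f^*\colon\hom_{\strat}(B,RW)\to\hom_{\strat}(A,RW)$, which is a $\Delta[1]_t$-homotopy equivalence since $f$ is a weak equivalence in $\strat$ and $RW$ is $n$-complicial. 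The vertical maps, obtained by cotensoring the acyclic fibration $\eta_W$ with the cofibrant objects $iA$ and $iB$ in the cartesian model structure on $\pstrat$, are acyclic fibrations between fibrant objects (the cotensors of the fibrant objects $W$ and $iRW$ by cofibrant ones). Since all four corners are fibrant and weak equivalences between fibrant objects coincide with $\Delta[1]_t$-homotopy equivalences, two-out-of-three then forces the top row to be a $\Delta[1]_t$-homotopy equivalence, as needed.

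The main obstacle is the preserving direction, specifically the identification of weak equivalences between fibrant objects with $\Delta[1]_t$-homotopy equivalences so that two-out-of-three can be applied; this is a standard feature of cartesian Cisinski-style model structures using $\Delta[1]_t$ as a good interval, but it should be invoked carefully in this setting, likely by pointing to the theory developed for the model structure on $\pstrat$ in \cref{modelstructureondiscretepresheaves}.
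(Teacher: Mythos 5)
Your argument is correct, and it partly reorganizes and partly departs from the paper's proof. The paper establishes both directions at once through a single chain of equivalences: $iX\to iY$ is a weak equivalence in $\pstrat$ iff $\hom_{\pstrat}(iY,A)\to\hom_{\pstrat}(iX,A)$ is a $\Delta[1]_t$-homotopy equivalence for every fibrant $A$ in $\pstrat$; iff the same holds with $A$ replaced by $iRA$ (using \cref{unittrivialfibration} and cartesianness of the model structure from \cref{modelstructureondiscretepresheaves} --- this is exactly your naturality square); iff $\hom_{\strat}(Y,RA)\to\hom_{\strat}(X,RA)$ is a $\Delta[1]_t$-homotopy equivalence by \cref{inclusionfullyfaithful}; iff $X\to Y$ is a weak equivalence in $\strat$, the last step using \cref{reflectorrespectsfibrant} together with the fact that every $n$-complicial set is of the form $RA$ for some fibrant $A$ in $\pstrat$. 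Your preservation direction is essentially the forward half of this chain. Your reflection direction is genuinely different and shorter: rather than running the chain backwards (which is where the essential surjectivity of $R$ on fibrant objects enters for the paper), you invoke \cref{Quillenpair} and the isomorphism $Ri\cong\id$ coming from reflectivity; this is clean and non-circular, since \cref{Quillenpair} is proved from the enriched adjunction of \cref{enrichedadjunction} alone. One simplification worth noting: the ``main obstacle'' you flag at the end is not actually an obstacle. You do not need to identify weak equivalences between fibrant objects with $\Delta[1]_t$-homotopy equivalences in order to apply two-out-of-three, because $\Delta[1]_t$-homotopy equivalences are by definition the maps that become invertible in the $\Delta[1]_t$-homotopy category, and invertibility in a category satisfies two-out-of-three on its own; the vertical acyclic fibrations are $\Delta[1]_t$-homotopy equivalences directly (the paper cites Cisinski, Prop.\ 1.3.26, for exactly this), so the top map of your square is a homotopy equivalence without any appeal to fibrancy of the corners.
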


\begin{proof}
Let $X\to Y$ be a map in $\strat$. The map $iX\to iY$ is a weak equivalence  in $\pstrat$ if and only if the map
$$\hom_{\pstrat}(iY,A)\to\hom_{\pstrat}(iX,A)$$
is a $\Delta[1]_t$-homotopy equivalence for any $A$ fibrant in $\pstrat$. Since the model structure is cartesian by \cref{modelstructureondiscretepresheaves}, 
the unit is an acyclic fibration by \cref{unittrivialfibration}, and acyclic fibrations are $\Delta[1]_t$-homotopy equivalences by \cite[Prop. 1.3.26]{cisinski}, this is equivalent to asking for the map
$$\hom_{\pstrat}(iY,iRA)\to\hom_{\pstrat}(iX,iRA)$$
to be a $\Delta[1]_t$-homotopy equivalence in $\psh{t\Delta}$. By \cref{inclusionfullyfaithful}, this is equivalent to asking for the map
$$\hom_{\strat}(Y,RA)\to\hom_{\strat}(X,RA)$$
to be a $\Delta[1]_t$-homotopy equivalence in $\strat$. By \cref{reflectorrespectsfibrant} and since the reflector is essentially surjective (even when restricted to fibrant objects on both sides), 
this is equivalent to saying that $X\to Y$ is a weak equivalence in $\strat$.
\end{proof}

\begin{prop}
\label{Quillenequivalence}
For $n\ge0$, the adjunction
$$R\colon\pstrat\rightleftarrows\strat\colon i$$
is a Quillen equivalence between the model structure for~$n$-precomplicial sets and $n$-complicial sets.
\end{prop}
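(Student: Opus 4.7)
The plan is to combine the technical results already established (the Quillen pair from \cref{Quillenpair}, the acyclicity of the unit from \cref{unittrivialfibration}, and the creation of weak equivalences from \cref{inclusioncreatesequivalences}) via the standard 2-out-of-3 criterion for a Quillen pair to be a Quillen equivalence.

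First, I would recall that the adjunction is already a Quillen pair by \cref{Quillenpair}. To verify that it is a Quillen equivalence, I would use the following criterion: for every cofibrant object $X$ in $\pstrat$ and every fibrant object $Y$ in $\strat$, a morphism $f\colon RX\to Y$ is a weak equivalence in $\strat$ if and only if its adjoint transpose $\widetilde f\colon X\to iY$ is a weak equivalence in $\pstrat$.

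Next, I would factor the transpose as the composite $X\xrightarrow{\eta_X} iRX \xrightarrow{if} iY$, where $\eta_X$ denotes the unit of the adjunction. By \cref{unittrivialfibration}, the unit $\eta_X$ is an acyclic fibration in $\pstrat$ and, in particular, a weak equivalence. By \cref{inclusioncreatesequivalences}, the inclusion $i$ creates weak equivalences, so $if$ is a weak equivalence in $\pstrat$ if and only if $f$ is a weak equivalence in $\strat$. Applying 2-out-of-3 to the factorization $\widetilde f = if \circ \eta_X$, we conclude that $\widetilde f$ is a weak equivalence in $\pstrat$ if and only if $f$ is a weak equivalence in $\strat$, which is exactly what was required.

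Since every ingredient is either already proven in the preceding subsections or a formal consequence of the 2-out-of-3 property, there is no real obstacle in this argument; the whole difficulty was absorbed into the earlier propositions (in particular into showing that the unit is an acyclic fibration, which rests on the explicit pointwise description of the reflector, and into showing that $i$ creates weak equivalences, which used the enriched adjunction of \cref{enrichedadjunction} together with the fact that $R$ preserves fibrant objects from \cref{reflectorrespectsfibrant}).
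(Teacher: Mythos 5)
Your proof is correct and follows essentially the same route as the paper: the paper also reduces the statement to the two facts that the unit is an acyclic fibration (\cref{unittrivialfibration}) and that the inclusion creates weak equivalences (\cref{inclusioncreatesequivalences}), and then simply cites the criterion of \cref{QuillenequivalenceRcreates}, whose proof is exactly the factorization-plus-2-out-of-3 argument you spell out.
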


The proof makes use of the following characterization of Quillen equivalence that applies to our situation.

\begin{prop}[{\cite[Prop.3.5]{FKKR}}]
\label{QuillenequivalenceRcreates}
If in a Quillen pair the right adjoint creates weak equivalences and the unit of any cofibrant object is a weak equivalence, the Quillen pair is a Quillen equivalence.\hfill{$\square$}
\end{prop}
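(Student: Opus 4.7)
The plan is to apply the criterion recorded in \cref{QuillenequivalenceRcreates}, which requires verifying three hypotheses for the adjunction $R\dashv i$: that it is a Quillen pair, that the right adjoint $i$ creates weak equivalences, and that the unit $\eta_X\colon X\to iRX$ is a weak equivalence for every cofibrant $X$ in $\pstrat$. Each of these ingredients has already been assembled in the present subsection, so the proof reduces to invoking them in turn.

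The Quillen pair assertion is precisely \cref{Quillenpair}, which established that $R$ preserves cofibrations (immediate from its levelwise description) and weak equivalences (using the enriched adjunction of \cref{enrichedadjunction} to transfer the defining lifting property across $R\dashv i$). The creation of weak equivalences by the inclusion $i$ is exactly the content of \cref{inclusioncreatesequivalences}, where the argument strings together the Cisinski-style characterization of weak equivalences via maps out to fibrant objects, the full faithfulness of the enriched inclusion from \cref{inclusionfullyfaithful}, and the fact that $R$ is essentially surjective onto fibrant objects thanks to \cref{reflectorrespectsfibrant}.

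For the unit condition, I would first observe that every prestratified simplicial set is cofibrant, since cofibrations in $\pstrat$ are precisely the monomorphisms by \cref{modelstructureondiscretepresheaves} and the initial object $\Delta[-1]$ admits a (unique) monomorphism into any object. Then by \cref{unittrivialfibration}, the unit $\eta_X\colon X\to iRX$ is an acyclic fibration for every $X\in\pstrat$, hence in particular a weak equivalence, so the cofibrant-unit hypothesis holds for every object in the source.

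With all three hypotheses of \cref{QuillenequivalenceRcreates} verified, that criterion upgrades the Quillen pair to a Quillen equivalence, yielding the desired conclusion. There is essentially no obstacle at this stage: the genuine technical content has been concentrated in \cref{Quillenpair,unittrivialfibration,reflectorrespectsfibrant,inclusioncreatesequivalences}, and the present proposition is the formal bookkeeping that combines them through the abstract Quillen-equivalence criterion.
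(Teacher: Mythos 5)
Your proposal does not prove the stated proposition at all: it proves a \emph{different} result, namely the specific Quillen equivalence $R\colon\pstrat\rightleftarrows\strat\colon i$ of \cref{Quillenequivalence}, and it does so by \emph{invoking} \cref{QuillenequivalenceRcreates} as a known criterion. But \cref{QuillenequivalenceRcreates} is precisely the statement you were asked to prove — an abstract assertion about an arbitrary Quillen pair, with no reference to $\pstrat$ or $\strat$. As written, your argument is circular with respect to the task: the one thing it never establishes is why a Quillen pair whose right adjoint creates weak equivalences and whose unit is a weak equivalence on cofibrant objects must be a Quillen equivalence. (The paper itself omits the proof, citing \cite[Prop.~3.5]{FKKR}, but that does not make your substitution of the downstream application for the criterion acceptable.)

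The missing argument is short. Let $F\colon\cM\rightleftarrows\cN\colon G$ be the Quillen pair. One must show that for $X$ cofibrant in $\cM$ and $Y$ fibrant in $\cN$, a map $f\colon FX\to Y$ is a weak equivalence if and only if its adjunct $f^{\flat}\colon X\to GY$ is. The adjunct factors as
\[
X\xrightarrow{\ \eta_X\ } GFX \xrightarrow{\ Gf\ } GY.
\]
Since $X$ is cofibrant, $\eta_X$ is a weak equivalence by hypothesis, so by two-out-of-three $f^{\flat}$ is a weak equivalence if and only if $Gf$ is; and since $G$ creates weak equivalences, $Gf$ is a weak equivalence if and only if $f$ is. This is the entire content of the proposition, and none of it appears in your write-up.
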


\begin{proof}[Proof of \cref{Quillenequivalence}]
The inclusion creates weak equivalences by \cref{inclusioncreatesequivalences} and the unit is a weak equivalence by \cref{unittrivialfibration}.
We conclude by \cref{QuillenequivalenceRcreates}.
\end{proof}

\subsection{The role of saturation}
\label{sectionsaturation}
We collect here for the interested reader a few remarks that should clarify what difference it makes to include the saturation anodyne extension in the list from \cref{anodynemaps}.

Verity \cite{VerityComplicialI} originally focused on a model structure on $\strat$ built in the same way as the one from \cref{modelstructureonstrat} but without including the saturation anodyne extension.
This translates to saying that the fibrant objects do not have the right lifting property with respect to this class of saturation anodyne extensions, and are therefore not (necessarily) \emph{saturated}.

It is a natural question to wonder in what Verity's original model structure, which we will refer to for the time being as the model structure for \emph{non-saturated} $n$-complicial sets\footnote{In Verity's terminology, these would be weak complicial sets that are moreover $n$-trivial.} and the model structure for $n$-complicial sets from \cref{modelstructureonstrat}.

More precisely, we would like to answer the following natural questions.
\begin{enumerate}[ref=(\arabic*)]
    \item\label{Q1} Are the two model structures equal?
    \item\label{Q2} If not, are the homotopy theories represented by these model structures equivalent?
    \item\label{Q3} What differences arise when focusing on stratified simplicial sets coming from the familiar examples of strict $n$-categories?
\end{enumerate}

The answer to Question \ref{Q1} is easily formulated.

\begin{answ}
Let $n>0$. The model structure for non-saturated $n$-complicial sets and the one for $n$-complicial sets are not equal.
\end{answ}

To see this, let us denote by $\mathbb I$ the free isomorphism category (regarded as a discrete $n$-category if $n>1$), and by $N^{RS}(\mathbb I)$ its Roberts--Street nerve, ie, the stratified simplicial set whose underlying simplicial set is the ordinary nerve $N(\mathbb I)$, and whose marked simplices are those corresponding to identities. In particular, in $N^{RS}(\mathbb I)$ everything is marked in dimension higher or equal than two, and only degenerate $1$-simplices are marked.

\begin{prop}
Let $n>0$.
\begin{enumerate}[leftmargin=*, label=(\alph*), ref=(\alph*)]
    \item\label{NonSat} In the model structure for non-saturated $n$-complicial sets, the Roberts--Street nerve $N^{RS}(\mathbb I)$ is fibrant and the unique map $N^{RS}(\mathbb I)\to\Delta[0]$ is not a weak equivalence.
    \item\label{Sat} In the model structure for $n$-complicial sets, the Roberts--Street nerve $N^{RS}(\mathbb I)$ is not fibrant and the unique map $N^{RS}(\mathbb I)\to\Delta[0]$ is a weak equivalence.
\end{enumerate}
\end{prop}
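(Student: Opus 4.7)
\emph{Fibrancy.} Viewing $\mathbb{I}$ as a strict $n$-category with only identity higher cells, its Roberts--Street nerve is a (strict) $n$-trivial complicial set by Verity's theorem \cite{VerityComplicialI}; in particular $N^{RS}(\mathbb{I})$ has the right lifting property with respect to the elementary anodyne extensions of types (1), (2) and (3) from \cref{anodynemaps}, which are the generating acyclic cofibrations of the non-saturated model structure. \emph{Non-equivalence.} The plan is to use the invariant $\pi_0^{\mathrm{mark}}(Z):=Z_0/{\sim}$, where $v\sim v'$ iff some marked $1$-simplex $\Delta[1]_t\to Z$ joins $v$ and $v'$. This is a $\Delta[1]_t$-homotopy invariant, since any homotopy $H\colon\Delta[1]_t\times X\to Y$ restricts at each $v\in X_0$ to a marked $1$-simplex $H(-,v)\colon\Delta[1]_t\to Y$. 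If $\pi\colon N^{RS}(\mathbb{I})\to\Delta[0]$ were a $\Lambda_n$-local equivalence, then for $W:=N^{RS}(\mathbb{I})$ (fibrant by the above) the induced map $W=W^{\Delta[0]}\to W^{N^{RS}(\mathbb{I})}$ would be a $\Delta[1]_t$-homotopy equivalence, hence induce a bijection on $\pi_0^{\mathrm{mark}}$. On the source, $\pi_0^{\mathrm{mark}}(W)=\{[0],[1]\}$, since only degenerate $1$-simplices of $N^{RS}(\mathbb{I})$ are marked. On the target, the vertices of $W^{N^{RS}(\mathbb{I})}$ correspond, via the faithful Roberts--Street nerve, to the four functors $\mathbb{I}\to\mathbb{I}$: the identity, the swap, and the two constants $c_0, c_1$. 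A marked $1$-simplex between two such vertices unpacks to a natural transformation whose components at $0$ and $1$ are marked $1$-simplices of $N^{RS}(\mathbb{I})$, hence identities in $\mathbb{I}$, which forces the source and target functors to coincide. Thus $\pi_0^{\mathrm{mark}}(W^{N^{RS}(\mathbb{I})})$ has four elements, contradicting bijectivity.

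\textbf{Part (b), non-fibrancy.} Consider the $3$-simplex $\sigma\colon\Delta[3]\to N^{RS}(\mathbb{I})$ with vertex sequence $(0,1,0,1)$, i.e., the chain $0\xrightarrow{f}1\xrightarrow{f^{-1}}0\xrightarrow{f}1$. Its edges $[02]$ and $[13]$ are degenerate (endpoints are equal), hence marked in $N^{RS}(\mathbb{I})$; and its $2$-faces together with $\sigma$ itself are marked because all $\geq 2$-cells of $\mathbb{I}$ are identities. Therefore $\sigma$ factors as a stratified map $\bar\sigma\colon\Delta[3]_{eq}\to N^{RS}(\mathbb{I})$. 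Since the edge $[01]$ equals $f$, which is non-degenerate and unmarked in $N^{RS}(\mathbb{I})$, the map $\bar\sigma$ does not extend along the saturation extension $\Delta[3]_{eq}\hookrightarrow\Delta[3]^\sharp$; hence $N^{RS}(\mathbb{I})$ is not fibrant in the saturated model structure.

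\textbf{Part (b), weak equivalence.} Let $\iota\colon\Delta[0]\hookrightarrow N^{RS}(\mathbb{I})$ be the inclusion of vertex $0$; then $\pi\circ\iota=\id_{\Delta[0]}$, so by $2$-out-of-$3$ it suffices to show $\iota$ is a weak equivalence. Pushing out the saturation extension $\Delta[3]_{eq}\hookrightarrow\Delta[3]^\sharp$ along $\bar\sigma$ produces an entire acyclic cofibration $N^{RS}(\mathbb{I})\hookrightarrow N(\mathbb{I})^\sharp$ that marks exactly the two remaining non-degenerate $1$-simplices $f$ and $f^{-1}$ (all higher simplices were already marked), yielding the maximal stratification on the underlying simplicial set. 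Now $\mathbb{I}$ is equivalent to the terminal groupoid, so $N(\mathbb{I})$ is a contractible Kan complex; choose a simplicial deformation retraction $H\colon\Delta[1]\times N(\mathbb{I})\to N(\mathbb{I})$ onto the vertex $0$. Using that $\Delta[1]_t=\Delta[1]^\sharp$ and that the target $N(\mathbb{I})^\sharp$ is fully marked, $H$ is automatically a stratified map $\Delta[1]_t\times N(\mathbb{I})^\sharp\to N(\mathbb{I})^\sharp$, exhibiting $\Delta[0]\hookrightarrow N(\mathbb{I})^\sharp$ as a $\Delta[1]_t$-homotopy equivalence and in particular a weak equivalence. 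Combining this with the acyclic cofibration above via $2$-out-of-$3$ shows that $\iota$, and hence $\pi$, is a weak equivalence. The most delicate step will be the verification of $\pi_0^{\mathrm{mark}}$-invariance and the identification of marked $1$-simplices of the internal hom in Part (a), both requiring a careful look at the stratification on products.
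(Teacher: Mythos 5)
Your proposal is correct, and parts of it coincide with the paper's argument: the non-fibrancy in (b) via the alternating $3$-simplex $\Delta[3]_{eq}\to N^{RS}(\mathbb I)$ whose edge $[01]$ is the unmarked $f$, and the weak equivalence in (b) via the pushout $N^{RS}(\mathbb I)\hookrightarrow N(\mathbb I)^{\sharp}$ of the saturation extension followed by contractibility of $N(\mathbb I)^{\sharp}$, are exactly the paper's steps (the paper deduces $\Delta[0]\to N(\mathbb I)^{\sharp}$ is a weak equivalence from \cref{KanAcyclicCof} rather than writing down the deformation retraction, but your explicit homotopy works since the target is maximally marked, so any simplicial homotopy is automatically stratified). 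Where you genuinely diverge is the non-equivalence claim in (a): the paper simply observes that $N^{RS}(\mathbb I)\to\Delta[0]$ is a fibration from a fibrant object, so if it were a weak equivalence it would be an acyclic fibration and would lift against the monomorphism $\Delta[1]\hookrightarrow\Delta[1]_t$, forcing every $1$-simplex (in particular $f$) to be marked. Your route via the invariant $\pi_0^{\mathrm{mark}}$ applied to $W^{W}$ is longer but sound: the restriction of a marked $1$-simplex of $W^W$ to $\Delta[1]_t\times\{v\}$ lands on a marked simplex of the product (both coordinates are marked, the second being degenerate), hence on a degenerate $1$-simplex of $N^{RS}(\mathbb I)$, and functors $\mathbb I\to\mathbb I$ are determined by their values on objects, so $\pi_0^{\mathrm{mark}}(W^W)$ has four elements against two for $W$; the "delicate steps" you flag at the end are in fact already handled by this product-stratification observation and by the full faithfulness of $N^{RS}$. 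The paper's version of (a) is shorter and avoids computing any internal hom; yours has the mild virtue of exhibiting a concrete homotopy invariant that distinguishes the two homotopy types without invoking the fibration/acyclic-fibration dichotomy.
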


\begin{proof}
For \ref{NonSat}, we know by classical results eg\ \cite{StreetFillers,SteinerComplicialNerves,VerityComplicialAMS}, 
that the stratified simplicial set $N^{RS}(\mathbb I)$ has the right lifting property with respect to all complicial horns, thinness and triviality anodyne extensions from \cref{anodynemaps}, and is therefore a non-saturated $n$-complicial set. Next, we observe that for $N^{RS}(\mathbb I)\to\Delta[0]$ to be a weak equivalence in the first model structure, it would have to be an acyclic fibration and in particular lift against the monomorphism $\Delta[1]\hookrightarrow\Delta[1]_t$. This means that all $1$-simplices would have to be marked, which is not true.

For \ref{Sat}, consider the $3$-simplex $(f^{-1},f,f^{-1})$ of the nerve of $\mathbb I$, $f$ being one of the two non-identity morphisms of $\mathbb I$. 
This $3$-simplex defines a map $\Delta[3]_{eq}\to N^{RS}(\mathbb I)$, which
cannot be lifted along the saturation anodyne extension $\Delta[3]_{eq}\to\Delta[3]^{\sharp}$, so $N^{RS}(\mathbb I)$ is not an $n$-complicial set. Next, we observe that the canonical map $N^{RS}(\mathbb I)\to N(\mathbb I)^{\sharp}$ can be written as a pushout of the saturation anodyne extension  $\Delta[3]_{eq}\to\Delta[3]^{\sharp}$ and is in particular a weak equivalence. Moreover, any inclusion $\Delta[0]\to N(\mathbb I)^{\sharp}$ is by \cref{KanAcyclicCof} a weak equivalence, and therefore so is the unique map $N(\mathbb I)^{\sharp}\to\Delta[0]$.
\end{proof}

The same proposition allows us to answer Question \ref{Q2}, too.

\begin{answ}
There can be no equivalence of homotopy theories between the saturated model structure and the non-saturated model structure that fixes the homotopy type of $\Delta[0]\cong N^{RS}([0])$ and the homotopy type of $N^{RS}(\mathbb I)$. 
\end{answ}

Indeed, we saw that the homotopy types of $\Delta[0]$ and $N^{RS}(\mathbb I)$ are equal in the homotopy category obtained from the model structure for $n$-complicial sets and not in the one obtained from the model structure for non-saturated $n$-complicial sets.

This fact is not a complete answer to Question \ref{Q2}, but it is at least convincing that if the homotopy theories of non-saturated $n$-complicial sets and of $n$-complicial sets are equivalent it would have to be through a quite odd comparison map that does not implement our intuition on strict $n$-categories.

We now focus on Question \ref{Q3}, and discuss how strict $n$-categories react to each of the two model structures. As we already did in the counterexample above, we consider the Roberts--Street nerve $N^{RS}$ as the natural way to regard a strict $n$-category as a stratified simplicial set. For a general $n$-category $\cC$, the Roberts--Street nerve is a stratified simplicial set whose underlying simplicial set is the Street nerve $N^{Street}(\cC)$, and whose marked $k$-simplices are those corresponding to an identity $k$-cell.
Modulo this identification, we can address Question \ref{Q3}.

\begin{answ} 
Let $n>0$.
\begin{enumerate}[leftmargin=*, label=(\alph*), ref=(\alph*)]
    \item The model structure for non-saturated $n$-complicial sets is suited to incorporate the homotopy theory of strict $n$-categories up to isomorphism.
    \item The model structure for $n$-complicial sets is suited to incorporate the homotopy theory of strict $n$-categories up to $n$-categorical equivalence.
\end{enumerate}
\end{answ}

A way to make this idea into a mathematical statement is the following.

\begin{prop}
Let\footnote{This restriction is not needed for \ref{ReflectNonSat}, and even \ref{ReflectSat} conjecturally holds true for all $n\ge0$.} $n=1,2$, and let $F\colon\cC\to\cD$ be a functor of $n$-categories.
\begin{enumerate}[label=(\alph*), ref=(\alph*)]
    \item\label{ReflectNonSat} The $n$-functor $F$ is an isomorphism of $n$-categories if and only if $N^{RS}(F)$ is a weak equivalence in the model structure for non-saturated $n$-complicial sets.
    \item \label{ReflectSat} The $n$-functor $F$ is an $n$-categorical equivalence if and only if $N^{RS}(F)$ is a weak equivalence in the model structure for $n$-complicial sets.
\end{enumerate}
\end{prop}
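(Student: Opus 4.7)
The plan is to treat parts \ref{ReflectNonSat} and \ref{ReflectSat} separately, since they rely on quite different inputs.

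For part \ref{ReflectNonSat}, the forward direction is immediate because $N^{RS}$ is functorial and sends isomorphisms of $n$-categories to isomorphisms of stratified simplicial sets, which are in particular weak equivalences. For the converse (which, as the footnote observes, holds for all $n$), I would use three ingredients. First, by the Street--Verity theorem recalled in the introduction, $N^{RS}(\cC)$ and $N^{RS}(\cD)$ are fibrant in the non-saturated model structure (they admit unique, hence in particular some, lifts against complicial horn, thinness, and triviality extensions), so a weak equivalence between them coincides with a $\Delta[1]_t$-homotopy equivalence. Second, since Verity's theorem shows that $N^{RS}$ is fully faithful on strict $n$-categories, any homotopy inverse of $N^{RS}(F)$ is of the form $N^{RS}(G)$ for some strict $n$-functor $G\colon \cD\to\cC$. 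Third, I would argue that any $\Delta[1]_t$-homotopy $H\colon N^{RS}(\cC)\times \Delta[1]_t \to N^{RS}(\cD)$ between two $n$-functors $F_0$ and $F_1$ is constant in the $\Delta[1]_t$-direction, so $F_0=F_1$: for each object $c\in\cC$, the image of the marked edge $[0,1]_t$ lands in a marked $1$-simplex of $N^{RS}(\cD)$, which by the Roberts--Street stratification is an identity in $\cD$; naturality with respect to $1$-cells of $\cC$ then forces the two functors to agree on objects and $1$-morphisms, and for $n=2$ the analogous observation on marked $2$-simplices forces the $2$-cell components of $H$ to be identities as well. Together these imply $G\circ F=\id$ and $F\circ G=\id$, so $F$ is an isomorphism.

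For part \ref{ReflectSat}, I would appeal to the comparison theorems with the established models of $(\infty,n)$-categories cited in the excerpt. For $n=1$, the model structure for $1$-complicial sets is Quillen equivalent to Lurie's naturally marked model structure \cite{htt}; under this equivalence, the nerve of a functor $F\colon\cC\to\cD$ is a weak equivalence precisely when the underlying functor of quasi-categories is a categorical equivalence, which for $1$-categories coincides with $F$ being an equivalence of categories, i.e., a $1$-categorical equivalence. For $n=2$, the Gagna--Harpaz--Lanari equivalence \cite{GHL} between the model structure for $2$-complicial sets and Lurie's model structure on scaled simplicial sets \cite{LurieGoodwillie} provides the analogous dictionary, matching weak equivalences of $2$-complicial nerves with $2$-categorical equivalences of strict $2$-functors.

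The main obstacle is the third ingredient of part \ref{ReflectNonSat} in the case $n=2$, namely the combinatorial verification that marked $2$-simplices of $N^{RS}(\cD)$ (which are precisely identities on $2$-cells) leave no room for a nontrivial $\Delta[1]_t$-homotopy. I would spell this out by analysing which pairs of simplices in the product $N^{RS}(\cC)\times\Delta[1]_t$ are forced to be marked by the marking on $\Delta[1]_t$, and tracing that their images must be identity $2$-cells, thereby ruling out nontrivial modification-type components. Part \ref{ReflectSat}, by contrast, is essentially a translation through the invoked Quillen equivalences once one checks that the Roberts--Street nerve of a strict $n$-category is sent to the appropriate object in each comparison model, which is a routine verification from the definitions.
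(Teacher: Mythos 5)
Your route is genuinely different from the paper's. The paper proves both parts by transferring model structures from $\pstrat$ to the category $n\cat$ of strict $n$-categories along a nerve functor: for (a) it transfers the trivial model structure (weak equivalences $=$ isomorphisms) along $N^{RS}$ from the non-saturated model structure, and for (b) it transfers the canonical/Lack model structure along a right adjoint $N^{\natural}$ from the model structure of \cref{modelstructureondiscretepresheaves}, then uses a natural weak equivalence $N^{RS}(\cC)\to N^{\natural}(\cC)$. In both cases the conclusion is that the relevant nerve \emph{creates} weak equivalences. Your argument for (a) --- fibrancy of $N^{RS}(\cC)$ in the non-saturated structure, full faithfulness of $N^{RS}$, and rigidity of $\Delta[1]_t$-homotopies between nerves of strict functors --- is a sound and more self-contained strategy, and the combinatorial step you flag (that marked simplices of $N^{RS}(\cD)$ are identity cells, so the prism decomposition of $N^{RS}(\cC)\times\Delta[1]_t$ forces the components and naturality constraints of the homotopy to be identities, and for $n=2$ additionally forces agreement on $2$-cells via the $3$-simplices of the prism) is exactly the verification that must be supplied; as written it is a plan rather than a proof, but I see no obstruction to completing it.

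The genuine gap is in (b). You describe the remaining work as ``a routine verification from the definitions'' that $N^{RS}(\cC)$ ``is sent to the appropriate object in each comparison model.'' It is not routine, because $N^{RS}(\cC)$ is \emph{not} the appropriate object: in the Roberts--Street stratification only identities are marked, so $N^{RS}(\cC)$ is not fibrant in the saturated model structure (the paper's own example with $N^{RS}(\mathbb I)$ makes exactly this point), and under the comparison with naturally marked simplicial sets (resp.\ scaled simplicial sets) it does not correspond on the nose to Lurie's naturally marked nerve (resp.\ the Duskin/Lurie nerve with its scaling). One must first show that the entire inclusion of $N^{RS}(\cC)$ into the nerve with \emph{all equivalences} marked is a weak equivalence in the saturated model structure --- i.e.\ that every equivalence $1$-cell and (for $n=2$) every invertible $2$-cell can be marked by iterated pushouts along the saturation extensions $\Delta[l]\star\eqDelta\to\Delta[l]\star\Delta[3]^{\sharp}$ and thinness extensions. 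This identification is precisely the content of the companion result the paper invokes (\cite[Theorem 5.2]{Nerves2Cat}) and is the substantive input your proposal is missing; without it, the dictionary through the Quillen equivalences of \cite{htt} and \cite{GHL} does not yet apply to $N^{RS}(F)$.
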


\begin{proof}
For \ref{ReflectSat}, we show in \cite[Theorem 4.12]{Nerves2Cat} that the model structure on the category $n\cat$ of strict $n$-categories, namely the canonical model structure on $\cat$ for $n=1$ and the Lack model structure on $2\cat$ for $n=2$, is transferred along a right adjoint from the model structure for $n$-precomplicial sets on $\pstrat$ from \cref{modelstructureondiscretepresheaves}. In particular, the right adjoint $N^{\natural}\colon n\cat\to\pstrat$ creates weak equivalences. Since we constructed in \cite[Theorem 5.2]{Nerves2Cat} a natural weak equivalence $N^{RS}(\cC)\to N^{\natural}(\cC)$ for any $n$-category $\cC$, the Roberts--Street nerve $N^{RS}$ also creates weak equivalences, as desired.

An easier variant of the same argument allows us to show \ref{ReflectNonSat}. More precisely, one can show that the trivial model structure on the category $n\cat$ of strict $n$-categories, namely the one in which weak equivalences are the isomorphisms and all maps are both fibrations and cofibrations, is transferred along the Roberts--Street nerve $N^{RS}$ from the model structure for non-saturated $n$-precomplicial sets on $\pstrat$. In particular, the right adjoint $N^{RS}$ creates weak equivalences, as desired.
\end{proof}

\section{The model structure for $n$-precomplicial spaces}
\label{section2}

In this section, we define the category $\spsh{t\Delta}$ of prestratified simplicial spaces, we construct a model structure by taking the left Bousfield localization of the injective model structure with respect to the elementary anodyne maps from \cref{anodynemaps}, and show that this model category is Quillen equivalent to that on $\pstrat$. In particular, the fibrant objects in $\spsh{t\Delta}$, which we call ``$n$-precomplicial spaces'', can be thought of as a model of $(\infty,n)$-categories.

\subsection{Prestratified simplicial spaces}

We now consider presheaves over $t\Delta$ valued in $\sset$.

\begin{defn}
A \emph{prestratified simplicial space} is a presheaf $X\colon t\Delta^{\op}\to\sset$. We denote by $\spsh{t\Delta}$ the category of prestratified simplicial spaces.
\end{defn}

The following is formal and generalizes the adjunction from \cite[Proposition 4.7]{JT}.

Going from the category $\pstrat$ of discrete presheaves over $t\Delta$ to the category $\spsh{t\Delta}$ of simplicial presheaves over $t\Delta$ means that now a presheaf $W\colon t\Delta^{\op}\to\sset$ encodes \emph{spaces} (as opposed to sets) $W_0$ of objects, $W_k$ of $k$-morphisms and $tW_k$ of $k$-equivalences. In the next section we will impose conditions in order for $W$ to model an $(\infty,n)$-category. We first investigate the categorical structure of $\spsh{t\Delta}$.

\begin{rmk}
\label{pandq}
The map $i_0\colon t\Delta\to t\Delta\times \Delta$, given by $i_0([m]_{(t)})=([m]_{(t)}, [0])$, and the projection $p\colon t\Delta\times \Delta\to t\Delta$ onto the first component induce two functors that form an adjoint pair
$$p^*\colon\psh{t\Delta} \rightleftarrows\spsh{t\Delta}\colon i_0^*.$$
The functor $p^*$ takes a prestratified simplicial set to a constant prestratified simplicial space. The functor $i_0^*$ remembers only the $0$-simplices of every simplicial set, constituting a prestratified simplicial space.

Similarly, the projection $q\colon t\Delta\times \Delta\to\Delta$ onto the second component induces a functor
$$q^*\colon\sset\to\spsh{t\Delta},$$
which takes a simplicial set to a constant prestratified simplicial space.

We refer the reader to \cite[\textsection 2.15]{Ara} for more details.
\end{rmk}

\begin{lem}[{\cite[Definition 11.7.2]{Hirschhorn}}]

\label{mappingobjects}
The category of functors $\spsh{t\Delta}$ is enriched over $\sset$. Given objects $X$ and $Z$ of $\spsh{t\Delta}$, the mapping object in $\sset$ is given componentwise by
$$\Map_{\spsh{t\Delta}}(X,Z)_m:=\Hom_{\spsh{t\Delta}}(X\times q^*\Delta[m], Z)$$
for any $m\geq 0$. The simplicial structure comes from the cosimplicial structure of the object $\Delta[\bullet]$.
\end{lem}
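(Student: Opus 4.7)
The plan is to invoke the general theory of functor categories valued in a cartesian closed category, observing that $\spsh{t\Delta} = \Fun(t\Delta^{\op}, \sset)$ and that $\sset$ itself is cartesian closed.

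First, I would define a tensoring of $\spsh{t\Delta}$ by $\sset$ pointwise by the formula $(X \otimes K)([m]_{(t)}) := X([m]_{(t)}) \times K$, with the structure maps inherited from $X$ and with the naturality in $K$ coming from products in $\sset$. This is manifestly functorial in both variables and satisfies $X \otimes (K \times L) \cong (X \otimes K) \otimes L$ and $X \otimes \Delta[0] \cong X$.

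Next, I would identify this tensoring with the cartesian product against the constant presheaf: since $q^*K$ evaluated at any object $[m]_{(t)} \in t\Delta$ equals $K$, and since products in $\spsh{t\Delta}$ are computed pointwise, there is a natural isomorphism
$$X \otimes K \;\cong\; X \times q^*K.$$
With this identification, the proposed formula for $\Map_{\spsh{t\Delta}}(X,Z)_m$ is simply $\Hom_{\spsh{t\Delta}}(X \otimes \Delta[m], Z)$. The simplicial structure on $\Map_{\spsh{t\Delta}}(X,Z)$ arises from the cosimplicial object $\Delta[\bullet]$ in $\sset$ together with the contravariant functoriality of $\otimes$ in its second slot and of $\Hom$ in its first slot.

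The enrichment axioms, namely existence of associative composition maps
$$\Map_{\spsh{t\Delta}}(Y,Z) \times \Map_{\spsh{t\Delta}}(X,Y) \to \Map_{\spsh{t\Delta}}(X,Z)$$
and unit maps $\Delta[0] \to \Map_{\spsh{t\Delta}}(X,X)$, then follow formally from the cartesian closed structure on $\sset$ together with the associativity $(X \otimes K) \otimes L \cong X \otimes (K \times L)$; this is entirely standard for functor categories into a cartesian closed base, and coincides with the construction given in Hirschhorn's Definition 11.7.2. Since every step is formal, the main point to verify is really only the identification $X \otimes K \cong X \times q^*K$, which reduces to the pointwise computation above; I do not anticipate any genuine obstacle.
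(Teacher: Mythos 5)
Your argument is correct and is essentially the standard construction the paper itself delegates to \cite[Definition 11.7.2]{Hirschhorn}: the paper offers no proof of its own, and your identification $X\otimes K\cong X\times q^*K$ for the pointwise tensoring, together with the diagonal on $\Delta[m]$ supplying composition, is exactly how that enrichment is built. Nothing is missing.
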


\subsection{Precomplicial spaces}

The notion of an $n$-precomplicial space is defined in terms of the class $p^*(\Lambda_n)$ of elementary anodyne maps and of mapping spaces.

\begin{rmk}
By \cref{tDeltasuperReedy}, the category $t\Delta$ is a regular skeletal Reedy category, and it is in particular elegant Reedy by \cref{regularandReedy}.
Thus, by \cite[Proposition 3.15]{elegant}, the Reedy cofibrations in $\spsh{t\Delta}$ turn out to be precisely the monomorphisms, so that the Reedy model structure turns out to be the injective model structure. Here, the category $\sset$ is endowed with the Kan-Quillen model structure.
\end{rmk}

Recall the set $\Lambda_n$ of elementary anodyne extensions from \cref{anodynemaps}.

\begin{defn}
An \emph{$n$-precomplicial space} is a prestratified space that is injectively fibrant and local with respect to all maps in $p^*\Lambda_n$, ie, for any elementary anodyne extension $I\to J$ in $\Lambda_n$ the induced map
$$\Map_{\spsh{t\Delta}}(p^*J,W)\to\Map_{\spsh{t\Delta}}(p^*I,W)$$
is a weak equivalence of simplicial sets.
\end{defn}

According to the intuition above, the fact that a prestratified simplicial space is local with respect to
\begin{enumerate}
    \item the horn inclusion $\Lambda^1[2]\to\Delta[2]_t$ is similar to the Segal condition, which encodes the fact that $1$-simplices can be composed, and a witness for the composite is obtained by filling the horn to a marked $2$-simplex.
    \item the horn inclusion $\Delta[0]\to\Delta[1]_t$ and the saturation anodyne extension $\eqDelta\to\Delta[3]^\sharp$ is equivalent when $n=1$ to being local with respect to the inclusion $\Delta[0]\to N\mathbb I$ of a point into the nerve of the walking isomorphism, and is therefore reminiscent of the completeness condition.
    \item the $n$-triviality anodyne extension $\Delta[l]\to\Delta[l]_t$ for $l>n$ says that in dimensions $l>n$ the space of $l$-equivalences is equivalent to the space of all $l$-simplices, namely all $l$-simplices are equivalences.
\end{enumerate}
It is interesting how the Segal and the completeness conditions are essentially both recorded by a complicial horn anodyne extension.

\subsection{The model structure on $\spsh{t\Delta}$}
We now construct a model structure on $\spsh{t\Delta}$ for $n$-precomplicial spaces as a left Bousfield localization of the vertical injective model structure.

The weak equivalences, which we call \emph{$\Lambda_n$-local equivalences}, are defined in terms of the class of $n$-precomplicial spaces and mapping spaces.

\begin{defn}
A map $g\colon C\to D$ in $\spsh{t\Delta}$ is a \emph{$\Lambda_n$-local weak equivalence} if for every $n$-precomplicial space $X$ the induced map
$$g^*\colon\Map_{\spsh{t\Delta}}(D,X)\to\Map_{\spsh{t\Delta}}(C,X)$$
is a weak equivalence in $\sset$. 
\end{defn}

\begin{thm}
\label{interestingmodelstructure}
For $n\ge0$, the category $\spsh{t\Delta}$ supports a model structure where
\begin{itemize}
\item the cofibrations are precisely the monomorphisms;
\item the fibrant objects are precisely the $n$-precomplicial spaces;
\item the weak equivalences are the vertical $\Lambda_n$-local weak equivalences;
\item the weak equivalences between fibrant objects are the levelwise weak equivalences.
\end{itemize}
\end{thm}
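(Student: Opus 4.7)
The plan is to obtain this model structure as a left Bousfield localization of the injective model structure on $\spsh{t\Delta}$, with respect to the set $p^*\Lambda_n$ of anodyne extensions (viewed as maps between constant simplicial presheaves via the functor $p^*$ of \cref{pandq}).

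First I would invoke the injective model structure on $\spsh{t\Delta}$, which exists because $\sset$ is combinatorial and $t\Delta$ is a small category. As noted in the remark preceding the statement, since $t\Delta$ is an elegant Reedy category (by the same result cited, \cref{tDeltasuperReedy} together with \cref{regularandReedy}), the Reedy and injective model structures coincide, and the cofibrations are precisely the monomorphisms. This model structure is combinatorial and left proper (cofibrations are monomorphisms, so every object is cofibrant), hence ripe for Bousfield localization.

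Next I would apply the standard existence theorem for left Bousfield localizations of combinatorial, left proper, simplicially enriched model categories (e.g.\ Barwick or \cite[Theorem 4.1.1]{Hirschhorn}) at the small set $S := p^*\Lambda_n$. This produces a new model structure on $\spsh{t\Delta}$ in which the cofibrations are unchanged (hence precisely monomorphisms), the weak equivalences are the $S$-local equivalences, and the fibrant objects are precisely those $X$ which are injectively fibrant and $S$-local with respect to the simplicial mapping object $\Map_{\spsh{t\Delta}}(-,X)$ of \cref{mappingobjects}. By definition, an object is $S$-local if and only if for every elementary anodyne extension $I \to J$ in $\Lambda_n$ the induced map $\Map_{\spsh{t\Delta}}(p^*J, X)\to \Map_{\spsh{t\Delta}}(p^*I, X)$ is a weak equivalence of simplicial sets, which is exactly the definition of an $n$-precomplicial space.

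To identify the weak equivalences as the $\Lambda_n$-local equivalences in the sense defined above, I would use the standard description of $S$-local equivalences in a combinatorial left proper simplicial model category, namely that a map $g\colon C\to D$ is an $S$-local equivalence if and only if for every $S$-local fibrant object $X$ the map $g^*\colon \Map_{\spsh{t\Delta}}(D,X)\to \Map_{\spsh{t\Delta}}(C,X)$ is a weak equivalence of simplicial sets. Since the $S$-local fibrant objects are precisely the $n$-precomplicial spaces, this coincides with our definition. The final clause — that weak equivalences between fibrant objects are levelwise weak equivalences — is then a general property of left Bousfield localization: weak equivalences between fibrant objects of the localization agree with weak equivalences in the unlocalized model structure, which here are the levelwise (``vertical'') weak equivalences of simplicial sets in the injective model structure.

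The main obstacle I expect is ensuring the enriched mapping space used to define ``$S$-local'' in the abstract Bousfield localization machinery really coincides with $\Map_{\spsh{t\Delta}}$ of \cref{mappingobjects}; this amounts to checking that the injective model structure is compatible with the simplicial enrichment (i.e.\ is a simplicial model category), which follows from the fact that $\sset$ with the Kan-Quillen model structure is simplicial and enrichment is computed pointwise in a diagram category. Once that compatibility is in place, the localization theorem applies verbatim and the characterisations of fibrant objects and weak equivalences are essentially definitional.
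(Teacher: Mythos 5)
Your argument is correct, and at its core it is the same as the paper's: both realize the model structure as the left Bousfield localization of the injective model structure on $\spsh{t\Delta}$ at the set $p^*\Lambda_n$, and then read off the cofibrations, fibrant objects, weak equivalences, and the behaviour between fibrant objects from the general theory of such localizations. The difference lies in the machinery and in one extra identification the paper makes. The paper works in the Cisinski--Ara language of accessible localizers: it first forms Ara's localizer $W_{fRezk}$, generated by $W_{inj}\cup p^*(\Lambda_n)$ \emph{together with} the cylinder maps $p^*(X\to X\times\Delta[1]_t)$, and then proves in \cref{lemmaWRezk} (using the pushout-product statements of \cref{anodyneproperties}) that these cylinder generators are redundant, so that $W_{fRezk}$ is already generated by $W_{inj}\cup p^*(\Lambda_n)$. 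This is what identifies the localized model structure with Ara's ``model structure of formal Rezk spaces'', an identification that is not needed for the theorem as stated but is what licenses the application of Ara's Theorem 4.11 in the proofs of \cref{QE1} and \cref{secondQuillenequivalence}. Your direct appeal to the Barwick/Hirschhorn localization theorem for left proper combinatorial simplicial model categories proves the present theorem with less overhead (the only points to check, which you correctly flag, are that the injective structure is simplicial with the pointwise enrichment and that all objects are cofibrant, so $\Map$ computes homotopy function complexes); but an analogue of \cref{lemmaWRezk} would still be required downstream.
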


The proof makes use of the language of ``localizers'', and we refer the reader to \cref{Model structure on simplicial presheaves and localizers} for more details.
The key fact 
is that localizers in a presheaf category that are ``accessible'' define precisely the classes of weak equivalences for a cofibrantly generated model structure whose cofibrations are the monomorphisms.

\begin{proof}[Proof of \cref{interestingmodelstructure}]\
As in \cite[\textsection 4.1]{Ara}, let us consider the localizer
$W_{fRezk}$
generated by
$$W_{inj}\cup p^*(\Lambda_n)\cup p^*\{\partial_X^\varepsilon\colon X\to X\times\Delta[1]_t\ |\ X\in\pstrat,\varepsilon=0,1\}.$$
Since $t\Delta$ is a skeletal Reedy category by \cref{tDeltasuperReedy}, we will see from \cref{lemmaWRezk} and \cref{localization} 
that the localizer $W_{fRezk}$ is an accessible localizer. 
The localizer $W_{fRezk}$ is the class of weak equivalences for a model structure on $\spsh{t\Delta}$, referred to as the \emph{model structure of formal Rezk spaces} in \cite[\textsection4.3]{Ara}, whose cofibrations are the monomorphisms.

We claim that the localizer $W_{fRezk}$ is in fact generated by the smaller class $W_{inj}\cup p^*(\Lambda_n)$, and prove this separately as \cref{lemmaWRezk}.
Then, by \cref{localization}, the model structure of formal Rezk spaces is the Bousfield localization of the injective model structure with respect to $\Lambda_n$. In particular, the weak equivalences are precisely the $p^*(\Lambda_n)$-local weak equivalences, and the fibrant objects are precisely the $n$-precomplicial sets, as desired.
\end{proof}

We complete the proof of the theorem by proving the claim.

\begin{lem}
\label{lemmaWRezk}
The localizer $W_{fRezk}$ is generated by the class $W_{inj}\cup p^*(\Lambda_n)$.
\end{lem}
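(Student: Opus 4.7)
The containment $W(W_{inj}\cup p^*(\Lambda_n))\subseteq W_{fRezk}$ is automatic from monotonicity of localizer generation, so the task reduces to showing the reverse inclusion. Concretely, I will show that every endpoint inclusion $p^*\partial^\varepsilon_X\colon p^*X\to p^*(X\times\Delta[1]_t)$, with $X\in\pstrat$ and $\varepsilon\in\{0,1\}$, already lies in the smaller localizer $W:=W(W_{inj}\cup p^*(\Lambda_n))$.

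The plan has two steps. First, I recognise $\partial^\varepsilon_X$ as an anodyne extension inside $\pstrat$. The inclusion $\partial^\varepsilon\colon\Delta[0]\hookrightarrow\Delta[1]_t$ is itself elementary anodyne: in the notation of \cref{preliminarynotation}, for $m=1$ the stratified simplicial set $\Delta^\varepsilon[1]$ coincides with $\Delta[1]_t$ (the unique non-degenerate $1$-simplex contains the required vertices and is thus marked), while its horn $\Lambda^\varepsilon[1]$ reduces to a single vertex $\Delta[0]$, so $\partial^\varepsilon$ is exactly the complicial horn inclusion $\Lambda^\varepsilon[1]\to\Delta^\varepsilon[1]$. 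Applying \cref{anodyneproperties}(1) with $K=\varnothing$ and $L=X$ then identifies the resulting pushout-product with the map $X\to X\times\Delta[1]_t$, exhibiting $\partial^\varepsilon_X$ itself as an anodyne extension in $\pstrat$.

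Second, I will transfer this conclusion across $p^*$. By construction, anodyne extensions in $\pstrat$ are retracts of transfinite compositions of pushouts of maps in $\Lambda_n$, all formed inside $\pstrat$. As a left adjoint, $p^*$ preserves retracts, pushouts and transfinite composites, so the $p^*$-image of any such cell complex is a retract of a transfinite composition of pushouts of maps in $p^*(\Lambda_n)$, now taken inside $\spsh{t\Delta}$. Every map of $p^*(\Lambda_n)$ is a monomorphism and lies in $W$ by construction, and the class $W\cap\text{Mono}$ of acyclic cofibrations for the left Bousfield localization of the injective model structure by $p^*(\Lambda_n)$ is closed under exactly those operations. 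Hence $p^*\partial^\varepsilon_X\in W\cap\text{Mono}\subseteq W$, as required.

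The subtlety most worth flagging is that the cell structure exhibiting $\partial^\varepsilon_X$ as anodyne is built inside $\pstrat$, while its image must be recognised as a cell complex inside $\spsh{t\Delta}$; this transfer is clean only because $p^*$ is a left adjoint commuting with all the relevant colimits and because $p^*(\Lambda_n)$ consists of monomorphisms that already lie in $W$.
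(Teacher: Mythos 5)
Your proof is correct, and it takes a mildly but genuinely different route from the paper's. The paper first invokes Ara's reduction (the proof of \cite[Proposition 4.2]{Ara}) to replace the proper class $p^*\{X\to X\times\Delta[1]_t\}$ in the generating data of $W_{fRezk}$ by the \emph{set} of pushout-products of the endpoint inclusions $\Delta[0]\to\Delta[1]_t$ with the \emph{generating} monomorphisms $\partial\Delta[m]\hookrightarrow\Delta[m]$ and $\Delta[m]\hookrightarrow\Delta[m]_t$; it then applies \cref{anodyneproperties}(1) to see that each of these lies in the saturation of $\Lambda_n$, and pushes that saturation statement through $p^*$. You instead apply \cref{anodyneproperties}(1) directly with the (non-generating) monomorphism $\varnothing\to X$, which immediately exhibits every $\partial^\varepsilon_X\colon X\to X\times\Delta[1]_t$ as a $\Lambda_n$-anodyne extension in $\pstrat$, and then transfer the cell structure through the colimit- and retract-preserving functor $p^*$. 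Both arguments ultimately rest on the same input, \cref{anodyneproperties}(1) (hence on \cref{AnodyneTimesMonoStrat} and \cref{anodyneinStratisanodyneinpStrat}), and both need the localizer $W(W_{inj}\cup p^*(\Lambda_n))$ to absorb retracts of transfinite compositions of pushouts of its monomorphic generators --- which you correctly justify via accessibility and the associated model structure, and which the paper leaves implicit in the phrase ``lies in the saturation''. What your version buys is the avoidance of the external reference to Ara's cellularity reduction; what the paper's version buys is that the whole argument stays at the level of comparing two \emph{sets} of generators, which fits the localizer formalism slightly more literally. Your side remark identifying $\Delta[0]\to\Delta[1]_t$ with the complicial horn inclusion $\Lambda^\varepsilon[1]\to\Delta^\varepsilon[1]$ is correct (the paper makes the same parenthetical observation in the proof of \cref{anodyneproperties}) but is not actually needed once you cite \cref{anodyneproperties}(1).
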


\begin{proof}
By definition, $W_{fRezk}$ is generated by
$$W_{inj}\cup p^*(\Lambda_n)\cup p^*\{X\to X\times\Delta[1]_t\ |\ X\in\pstrat\}.$$
As explained in the proof of \cite[Proposition 4.2]{Ara}, we know that $W_{fRezk}$ is generated by
\begin{align*}
&W_{inj}\cup p^*(\Lambda_n)\cup\quad\quad\\
&\quad\cup p^*\{\partial\Delta[m]\times\Delta[1]_t \aamalg{\partial\Delta[m]\times \Delta[0]}\Delta[m]\times \Delta[0]\to \Delta[m]\times\Delta[1]_t \, |\, m\geq 0\} \\
&\quad\quad\quad\cup p^*\{\Delta[m]\times\Delta[1]_t \aamalg{\Delta[m]\times \Delta[0]}\Delta[m]_t\times \Delta[0]\to\Delta[m]_t\times\Delta[1]_t \, |\, m\geq 1\}.
\end{align*}
As observed in \cref{anodyneproperties}, the pushout-product of a generating monomorphism $K\to L$ and either inclusion $\Delta[0]\to\Delta[1]_t$,
$$K\times\Delta[1]_t \aamalg{K\times \Delta[0]}L\times \Delta[0]\to L\times\Delta[1]_t,$$ lies in the saturation of $\Lambda_n$, in the sense of \cref{saturation}. Given that $p^*$ preserves all colimits as a left adjoint, the pushout product
$$p^*(K\times\Delta[1]_t \aamalg{K\times \Delta[0]}L\times \Delta[0])\to p^*(L\times\Delta[1]_t)$$
lies in the saturation of $p^*(\Lambda_n)$.
Thus, this class generates the same localizer as  $W_{inj}\cup p^*(\Lambda_n)$.
\end{proof}

\begin{digression}
The model structure on $\spsh{t\Delta}$ for $n$-precomplicial spaces can also be realized as a localization of the injective model structure on $\Fun(\Delta^{\op},\psh{t\Delta})\cong \spsh{t\Delta}$ (sometimes referred to as the ``horizontal (injective) model structure''). More details on this description can be found in \cite{Ara}.
\end{digression}

\begin{rmk}
The adjunction
    $$(-)^{\flat}\colon\spsh{\Delta}\rightleftarrows\spsh{t\Delta}\colon U$$
    is a Quillen equivalence
\begin{enumerate}
    \item[(0)] between the model structure for $0$-complicial spaces and the model structure for $\infty$-groupoids.
    \item between the model structure for $1$-complicial spaces and the Rezk model structure for complete Segal spaces.
\end{enumerate}
This pattern cannot be extended to $n>1$ because, unlike $1$-equivalences, $2$-equivalences cannot be detected just by means of the simplicial structure.
\end{rmk}

\begin{rmk}
A prestratified simplicial space $W$ which is an $n$-precomplicial space is morally ``stratified'', in the sense that the structure maps
$$tW_m\to W_m$$
are injective in a homotopical sense. Let us illustrate this for the case $m=2$.

We observe that there is a commutative diagram in $\pstrat$
\[
\begin{tikzcd}[column sep=0.5cm]
\Delta[1]\aamalg{\Delta[0]}\Delta[1]\arrow[d, "\cong"]\arrow[rr, "\simeq"]&&\Delta[2]_t \arrow[d, equal]\\
\Lambda^1[2]\arrow[rr, "\simeq"] \arrow[rd]&& \Delta^1[2]\\
&\Delta[2]\arrow[ur]&,
\end{tikzcd}
\]
where the horizontal maps are acyclic cofibrations.
When applying $p^*$ and taking mapping spaces into $W$, we obtain a commutative diagram in $\sset$ 
\[
\begin{tikzcd}[column sep=-1cm]
\Map(p^*\Delta[1], W) \underset{\Map(p^*\Delta[0], W)}{\times} \Map(p^*\Delta[1], W)&&\Map(p^*\Delta[2]_t, W) \arrow[d, equal]\arrow[ll, "\simeq"]\\
\Map(p^*\Lambda^1[2], W) \arrow[u, "\cong"] && \Map(p^*\Delta^1[2], W)\arrow[ll, "\simeq"]\arrow[ld]\\
&\Map(p^*\Delta[2], W)\arrow[ul]&,
\end{tikzcd}
\]
where the horizontal maps are now acyclic fibrations, and in particular weak equivalences. This can be read as saying that the structure map 
$$tW_2=W^{p^*\Delta[2]_t}\to W^{p^*\Delta[2]}=W_2$$
has a left inverse up to homotopy, as desired.
\end{rmk}

\subsection{Two Quillen equivalences with $\pstrat$}

We now show that the model structures for $n$-precomplicial sets and $n$-precomplicial spaces are Quillen equivalent in two different ways. This is analogous to the more familiar picture from \cite[\textsection 4]{JT}, where Joyal--Tierney exhibit two Quillen equivalences between the model structure for quasi-categories and the model structure for complete Segal spaces.

\begin{thm}
\label{QE1}
For $n\ge0$, the adjunction
$$p^*\colon\pstrat \rightleftarrows\spsh{t\Delta} \colon i_0^*$$
is a Quillen equivalence between the model structures for $n$-precomplicial sets and $n$-precomplicial spaces.
\end{thm}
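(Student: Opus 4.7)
The plan is to realize this Quillen equivalence as the induced equivalence on Bousfield localizations of an underlying ``basic'' Quillen equivalence between model structures that do not involve the set $\Lambda_n$, following Ara's approach in \cite{Ara}.

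Since $t\Delta$ is a regular skeletal Reedy category (via \cref{tDeltasuperReedy}), a specialization of Ara's Reedy-theoretic arguments from \cite[\textsection 2]{Ara} produces a basic Quillen equivalence $p^* \dashv i_0^*$ between the Cisinski model structure on $\pstrat$ whose cofibrations are the monomorphisms and whose weak equivalences form the localizer $W_{inj}$, and the injective model structure on $\spsh{t\Delta}$. The Quillen pair property is immediate: $p^*$ preserves monomorphisms (being given by a levelwise constant functor), and $W_{inj}$ is the class whose image under $p^*$ consists of injective weak equivalences. The derived unit is an identity since $p \circ i_0 = \id_{t\Delta}$ forces $i_0^* p^* = \id$, and the derived counit is a weak equivalence by a Reedy-theoretic analysis at injectively fibrant objects.

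Next, I observe that the two model structures appearing in the statement arise as left Bousfield localizations of these basic ones at compatible sets. By the proof of \cref{modelstructureondiscretepresheaves} (which goes through Cisinski's theory), the $n$-precomplicial model structure on $\pstrat$ is the localization of the basic Cisinski structure at $\Lambda_n$; by \cref{lemmaWRezk}, the $n$-precomplicial space model structure on $\spsh{t\Delta}$ is the localization of the injective structure at $p^*(\Lambda_n)$. A standard transfer principle for Quillen equivalences under compatible left Bousfield localizations---for instance \cite[Theorem 3.3.20]{Hirschhorn}---then yields the desired Quillen equivalence. The Quillen pair property of the localized adjunction is automatic since $p^*(\Lambda_n)$ consists of weak equivalences in the localized target by construction, and $p^*$ preserves monomorphisms.

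The main obstacle is establishing the basic Quillen equivalence; the Quillen pair property is essentially immediate, but showing that the derived counit is a weak equivalence at injectively fibrant objects uses the regular skeletal Reedy structure of $t\Delta$ in an essential way. Once this basic equivalence is in hand, the transfer to the $n$-(pre)complicial setting follows from general principles about compatible Bousfield localizations of Quillen equivalences.
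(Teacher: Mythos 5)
Your strategy has a genuine gap at its foundation: the ``basic'' Quillen equivalence you want to localize does not exist. Before localizing, the adjunction $p^*\dashv i_0^*$ is not a Quillen equivalence between any model structure on $\pstrat$ with cofibrations the monomorphisms and the injective model structure on $\spsh{t\Delta}$. Note first that the phrase ``the Cisinski model structure on $\pstrat$ whose weak equivalences form the localizer $W_{inj}$'' does not parse: $W_{inj}$ lives on $\spsh{t\Delta}$, and its preimage under $p^*$ in $\pstrat$ is just the class of isomorphisms, since a map of discrete simplicial sets is a levelwise weak equivalence exactly when it is a bijection. More to the point, for a general injectively fibrant $W$ the counit $p^*i_0^*W\to W$ is, in level $a$, the inclusion of the constant simplicial set on $(W_a)_0$ into $W_a$, which is almost never a levelwise weak equivalence; so the derived counit criterion fails before localization, no matter how one analyzes the Reedy structure. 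This is the same phenomenon as in Joyal--Tierney: $p^*\dashv i_0^*$ relates quasi-categories to \emph{complete Segal} spaces, not to the unlocalized injective model structure on bisimplicial sets. Ara's \cite[Theorem 4.11]{Ara}, which the paper invokes, is a statement about the localized structures from the start; its unit/counit analysis uses locality with respect to $p^*\{X\to X\times\Delta[1]_t\}$ in an essential way, so the equivalence cannot be produced by transferring an unlocalized equivalence through compatible Bousfield localizations.

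The proposal also misses the one genuine subtlety that the paper's proof has to address when citing \cite[Theorem 4.11]{Ara}: Ara's hypotheses require the interval $\Delta[1]_t$ to be fibrant, which fails here, as $\Delta[1]_t$ is not an $n$-precomplicial set. The paper's proof consists precisely of checking that this hypothesis enters Ara's argument only through the statement that the projection $X\times\Delta[1]_t\to X$ is a weak equivalence for every $X$, which is then deduced from \cref{anodyneproperties}. Any correct write-up needs either this verification or an independent argument that the derived counit at the $p^*(\Lambda_n)$-local injectively fibrant objects is a weak equivalence.
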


\begin{proof}
The proof is an application of a variant of \cite[Theorem 4.11(1)]{Ara}, which states the desired Quillen equivalence, given that $t\Delta$ was shown to be a regular skeletal Reedy category in \cref{tDeltasuperReedy}.

In order to evoke the original formulation of \cite[Theorem 4.11]{Ara} we would need to have that the cylinder object $\Delta[1]_t$ is fibrant, which is not the case. However, a careful analysis of the argument shows that, in order to prove \cite[Theorem 4.11]{Ara}, this assumption is only used to show the preliminary result \cite[Theorem 2.14]{Ara} and to prove \cite[Theorem 4.10]{Ara}. For these to hold in our situation, it suffices to know that the projection $X\times \Delta[1]_t \to X$ is a weak equivalence for every prestratified simplicial set $X$. 
The projection $X\times \Delta[1]_t \to X$ is a retraction of either of the two canonical inclusions $X\to X\times \Delta[1]_t$, which can be seen as pushout-products of the map $\Delta[0]\to\Delta[1]_t$ and the identity of $X$; by \cref{anodyneproperties} these pushout-products are weak equivalences, and therefore so is the projection $X\times \Delta[1]_t \to X$.
\end{proof}

\begin{rmk}
As a special case of \cite[\textsection2.22]{Ara}, the cosimplicial object $\Delta[\bullet]^\sharp$ in $\pstrat$
induces an adjunction 
$$Real\colon \spsh{t\Delta} \rightleftarrows\psh{t\Delta}\colon Sing.$$
\end{rmk}

\begin{thm}
\label{secondQuillenequivalence}
For $n\ge0$, the adjunction
$$Real\colon \spsh{t\Delta} \rightleftarrows\psh{t\Delta}\colon Sing$$
is a Quillen equivalence between the model structures for $n$-precomplicial spaces and $n$-precomplicial sets.
\end{thm}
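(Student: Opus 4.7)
The strategy parallels the proof of \cref{QE1}: one could invoke a variant of \cite[Theorem 4.11(2)]{Ara} directly (adapted, as in the proof of \cref{QE1}, to accommodate the fact that $\Delta[1]_t$ is not fibrant). However, I would rather exploit the already established Quillen equivalence from \cref{QE1} together with a direct computation of the composite $Real \circ p^*$, and then conclude by a two-out-of-three argument.

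First, I would establish the key identification $Real \circ p^* \cong \id_\pstrat$. For any prestratified simplicial set $X$, the simplicial presheaf $p^*X$ is constant in the simplicial direction, so by cartesian closedness of $\pstrat$ (\cref{cartesianclosed}) the defining coend factors as
\[ Real(p^*X)\;=\;\int^{[k]\in\Delta} X\times\Delta[k]^\sharp \;\cong\; X\times\int^{[k]\in\Delta}\Delta[k]^\sharp \;\cong\; X\times\Delta[0] \;\cong\; X, \]
where a direct pointwise computation (colimits in $\pstrat$ are pointwise) shows that $\int^{[k]}\Delta[k]^\sharp \cong \colim_\Delta\Delta[\bullet]^\sharp$ is the terminal prestratified simplicial set. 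Passing to right adjoints gives $i_0^*\circ Sing \cong \id_\pstrat$ as well.

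Next, I would verify that $Real \dashv Sing$ is a Quillen adjunction between the model structures for $n$-precomplicial spaces (\cref{interestingmodelstructure}) and $n$-precomplicial sets (\cref{modelstructureondiscretepresheaves}). Preservation of cofibrations (monomorphisms) follows from the Reedy-cofibrancy of the cosimplicial object $\Delta[\bullet]^\sharp$ in $\pstrat$, by a standard external pushout-product computation. Preservation of acyclic cofibrations then reduces, in view of the explicit description of the localizer in \cref{lemmaWRezk}, to two checks: (a) $Real$ sends each $p^*(I\to J)$ with $I\to J\in\Lambda_n$ to a weak equivalence, which is immediate from the identification above since $Real(p^*(I\to J))\cong I\to J$ lies in $\Lambda_n$; and (b) $Real$ sends the generating Kan--Quillen injective acyclic cofibrations to weak equivalences, which by cartesianness of the target reduces to showing that each maximally marked horn inclusion $(\Lambda^i[k])^\sharp\hookrightarrow\Delta[k]^\sharp$ is an acyclic cofibration in $\pstrat$.

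Finally, the composite Quillen adjunction $\pstrat \xrightarrow{p^*\dashv i_0^*} \spsh{t\Delta} \xrightarrow{Real\dashv Sing} \pstrat$ is naturally isomorphic to the identity by the first step, and in particular a Quillen equivalence. Combined with \cref{QE1}, which ensures $p^*\dashv i_0^*$ is a Quillen equivalence, the two-out-of-three property for Quillen equivalences (applied to the derived functors) forces $Real\dashv Sing$ to also be a Quillen equivalence. The main obstacle is verification (b) above: showing that $(\Lambda^i[k])^\sharp\hookrightarrow\Delta[k]^\sharp$ is an acyclic cofibration in the model structure for $n$-precomplicial sets. This is expected, since both objects should be contractible ``$\infty$-groupoid-like'' prestratified simplicial sets, but making it rigorous requires careful use of the saturation anodyne extensions (to witness equivalences in low dimensions as marked) together with horn fillings.
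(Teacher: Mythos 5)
Your route is sound but genuinely different from the paper's. The paper proves this theorem by invoking the same variant of \cite[Theorem 4.11]{Ara}(2) used for \cref{QE1}, whose hypotheses reduce to two checks: that $t\Delta$ is a regular skeletal Reedy category (\cref{tDeltasuperReedy}) and that $\Delta[\bullet]^\sharp$ is a cosimplicial $W(\Lambda_n)$-resolution (\cref{cosimplicialresolution}). You instead unpack the mechanism behind Ara's theorem: the identification $Real\circ p^*\cong\id$ (which is correct --- the coend manipulation and the fact that $\colim_\Delta\Delta[\bullet]^\sharp$ is the terminal object, checked pointwise, both hold), a direct verification that $Real\dashv Sing$ is a Quillen adjunction, and two-out-of-three for Quillen equivalences against \cref{QE1}. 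The two-out-of-three step is legitimate since derived functors of composable left Quillen functors compose. What your approach buys is transparency: it exhibits the two Quillen equivalences of this section as derived inverses of one another, rather than as two independent applications of a black box. What it costs is the hands-on verification that $Real$ is left Quillen, which the paper delegates entirely to Ara.

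Concerning the step you flag as the ``main obstacle'': the assertion that $(\Lambda^i[k])^\sharp\hookrightarrow\Delta[k]^\sharp$ is an acyclic cofibration in $\pstrat$ is an instance of the paper's \cref{KanAcyclicCof}, which shows that $(-)^\sharp$ sends any acyclic Kan cofibration to a $\Lambda_n$-anodyne extension. So this is not a genuine gap relative to the paper's toolkit; the paper's own route relies on the same lemma, applied to $\Delta[0]\hookrightarrow\Delta[k]$ rather than to horn inclusions, inside the proof of \cref{cosimplicialresolution}. One small correction to your sketch of how that lemma should go: its proof uses the complicial horn extensions and the thinness extensions, not the saturation extensions --- one first fills $\Lambda^j[m]\to\Delta^j[m]$ and then marks the one remaining face via $\Delta^j[m]'\to\Delta^j[m]''$.
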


In order to apply Ara's machinery, we need some preliminary work. Recall from \cref{sharpandflat} that $X^\sharp$ denotes the maximal stratification of a simplicial set $X$.

\begin{prop}
\label{cosimplicialresolution}
The cosimplicial object $\Delta[\bullet]^\sharp$ in $\pstrat$ is a \emph{cosimplicial $W(\Lambda_n)$-resolution} in the sense of \cite[\textsection 2.22]{Ara}, ie,
\begin{enumerate}
    \item the canonical morphism $\Delta[0]^{\sharp}\amalg\Delta[0]^{\sharp}\to\Delta[1]^{\sharp}$ is a 
monomorphism;
    \item for every $k\ge0$ and every $X\in\pstrat$, the canonical projection $X\times\Delta[k]^\sharp\to X$
is a $W(\Lambda_n)$-equivalence, namely a weak equivalence in $\pstrat$.
\end{enumerate}
\end{prop}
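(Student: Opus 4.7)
The plan is to treat parts (1) and (2) separately, with (1) being essentially formal and (2) constituting the real work.

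For (1), recall from \cref{sharpandflat} that the forgetful functor $U\colon\pstrat\to\sset$ has both a left and a right adjoint, so it preserves all colimits, and monomorphisms in $\pstrat$ are detected on underlying simplicial sets. The image under $U$ of the canonical map $\Delta[0]^{\sharp}\amalg\Delta[0]^{\sharp}\to\Delta[1]^{\sharp}$ is the standard inclusion $\Delta[0]\amalg\Delta[0]\hookrightarrow\Delta[1]$, which is evidently a monomorphism; hence the original map is a monomorphism in $\pstrat$.

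For (2), the first step is a standard cartesianness reduction. The model structure on $\pstrat$ from \cref{modelstructureondiscretepresheaves} is cartesian and every object is cofibrant, so if $\Delta[k]^{\sharp}\to\Delta[0]$ is a weak equivalence, its product with $\mathrm{id}_X$, namely the projection $X\times\Delta[k]^{\sharp}\to X$, is too. Since any vertex inclusion $\Delta[0]\hookrightarrow\Delta[k]^{\sharp}$ is a section of this projection, by two-out-of-three it suffices to show that one such vertex inclusion is a weak equivalence. I will aim for the stronger statement that it is a $\Lambda_n$-anodyne extension.

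The cases $k=0$ and $k=1$ are easy: $k=0$ is the identity, and for $k=1$ the inclusion $\Delta[0]\hookrightarrow\Delta[1]^{\sharp}=\Delta[1]_t$ coincides with the complicial outer horn extension $\Lambda^0[1]\hookrightarrow\Delta^0[1]$ of \cref{anodynemaps}(1). For general $k\ge 2$, the plan is to exhibit $\Delta[0]\hookrightarrow\Delta[k]^{\sharp}$ as a finite composition of pushouts of elementary anodyne extensions from $\Lambda_n$. A natural route is to factor it through $E[k]^{\sharp}$, the maximal stratification of the nerve of the indiscrete groupoid on $k+1$ vertices. The inclusion $\Delta[0]\hookrightarrow E[k]^{\sharp}$ can be built by inductively adjoining and marking the non-degenerate cells of $E[k]$: edges via pushouts of $\Delta[0]\hookrightarrow\Delta[1]_t$, and higher cells via a combination of triviality extensions in dimensions $>n$ and saturation extensions in dimensions $\le n$, exploiting the fact that, by construction of the indiscrete groupoid, every non-degenerate cell of $E[k]$ represents an equivalence. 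The inclusion $\Delta[k]^{\sharp}\hookrightarrow E[k]^{\sharp}$ is then handled by an analogous but more delicate filtration attaching the missing non-degenerate simplices of $E[k]$.

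The main obstacle is the combinatorial bookkeeping of this filtration for $k\ge 2$: at each stage one must check that the newly attached cell, together with its already-marked faces, fits into a pushout diagram for some elementary anodyne extension in $\Lambda_n$. The triviality extensions dispose of dimensions $>n$ essentially by inspection, whereas the saturation extensions $\Delta[l]\star\Delta[3]_{eq}\to\Delta[l]\star\Delta[3]^{\sharp}$ are the delicate ingredient in dimensions $\le n$, since they must be applied in configurations whose $\Delta[3]_{eq}$-part matches the existing markings on a subcomplex of $E[k]^{\sharp}$. Once the filtration is organized (taking inspiration, if needed, from the analogous filtrations in \cite{VerityComplicialI,RiehlVerityBook}), stability of $\Lambda_n$-anodyne extensions under pushouts and transfinite composition finishes the proof.
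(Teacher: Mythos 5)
The formal reductions in your argument are exactly the paper's: part (1) is immediate, and for part (2) you correctly use cartesianness of the model structure from \cref{modelstructureondiscretepresheaves} together with two-out-of-three to reduce to showing that the vertex inclusion $\Delta[0]\hookrightarrow\Delta[k]^{\sharp}$ is a weak equivalence. The problem is that the remaining step --- the actual content of the proposition for $k\ge 2$ --- is left as an unexecuted plan, and the plan as sketched does not work as stated. Triviality extensions $\Delta[l]\to\Delta[l]_t$ and saturation extensions $\Delta[l]\star\Delta[3]_{eq}\to\Delta[l]\star\Delta[3]^{\sharp}$ are \emph{entire} inclusions: they only add markings to simplices that are already present, so they cannot be used to ``adjoin'' the non-degenerate cells of $E[k]$. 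Likewise, a pushout of $\Delta[0]\hookrightarrow\Delta[1]_t$ attaches an edge with one \emph{new} endpoint; it cannot attach an edge between two existing vertices, which is what your filtration of $E[k]$ requires from the very first stage. Any cell-attachment must go through the complicial horn extensions, and the bookkeeping for the (infinitely many) non-degenerate simplices of $E[k]$, plus the second inclusion $\Delta[k]^{\sharp}\hookrightarrow E[k]^{\sharp}$, is a substantial argument you have not supplied.

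The paper avoids all of this via \cref{KanAcyclicCof}: if $X\to Y$ is an acyclic Kan cofibration of simplicial sets, then $X^{\sharp}\to Y^{\sharp}$ is a $\Lambda_n$-anodyne extension. Its proof reduces to a single pushout of a horn inclusion $\Lambda^j[m]\to\Delta[m]$ and factors $X^{\sharp}\to Y^{\sharp}$ as a pushout of the complicial horn extension $\Lambda^j[m]\to\Delta^j[m]$ followed by a pushout of the thinness extension $\Delta^j[m]'\to\Delta^j[m]''$ (which marks the one remaining face). Applying this to the acyclic Kan cofibration $\Delta[0]\to\Delta[k]$ finishes part (2) using only horn and thinness extensions, with no detour through $E[k]$ and no use of saturation or triviality maps. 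If you want to salvage your approach, you should either prove the analogue of \cref{KanAcyclicCof} directly, or carry out the $E[k]$ filtration in full with the correct generating maps; as written, the key step is missing.
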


The proof of the proposition makes use of the following lemma.

\begin{lem}\label{KanAcyclicCof} 
For $n\ge0$, if a map of simplicial sets $X\to Y$ is an acyclic Kan cofibration, then $X^{\sharp} \to Y^{\sharp}$ is a $\Lambda_n$-anodyne extension in $\pstrat$.
\end{lem}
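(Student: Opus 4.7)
The natural strategy is to reduce the problem to the generating acyclic cofibrations of the Kan--Quillen model structure on $\sset$, namely the horn inclusions $\Lambda^k[m] \hookrightarrow \Delta[m]$ for $m \ge 1$ and $0 \le k \le m$. To this end, I would introduce the class $\mathcal{C}$ of monomorphisms $f$ in $\sset$ such that $f^\sharp$ is $\Lambda_n$-anodyne in $\pstrat$, and argue that $\mathcal{C}$ is saturated. The key point is that $(-)^\sharp$ commutes with pushouts along monomorphisms: for any pushout $B \cup_A C$ in $\sset$ with $A \hookrightarrow B$ a monomorphism, one has $(B \cup_A C)^\sharp \cong B^\sharp \cup_{A^\sharp} C^\sharp$ in $\pstrat$, by a direct componentwise computation using that all positive-dimensional simplices are marked on both sides. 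Since $\Lambda_n$-anodyne extensions are themselves closed under pushouts, transfinite compositions and retracts, $\mathcal{C}$ inherits these closure properties from $(-)^\sharp$; thus it suffices to show that each horn inclusion $\Lambda^k[m] \hookrightarrow \Delta[m]$ lies in $\mathcal{C}$.

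For $m = 1$ this is immediate, since $(\Lambda^k[1])^\sharp \hookrightarrow (\Delta[1])^\sharp$ coincides with the elementary complicial horn extension $\Lambda^k[1] = \Delta[0] \hookrightarrow \Delta^k[1] = \Delta[1]_t$. For $m \ge 2$, the plan is a two-step factorization through the pushout
$$P := (\Lambda^k[m])^\sharp \cup_{\Lambda^k[m]} \Delta^k[m]$$
in $\pstrat$, where the left leg is the entire inclusion of the complicial horn into its maximally marked variant and the right leg is the elementary complicial horn extension of \cref{anodynemaps}(1). The induced map $(\Lambda^k[m])^\sharp \to P$ is a pushout of an elementary anodyne extension and is therefore itself $\Lambda_n$-anodyne. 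A componentwise bookkeeping then reveals that the underlying simplicial set of $P$ is $\Delta[m]$ and that every non-degenerate positive-dimensional simplex is marked in $P$ except the $k$-th face: every other simplex either lies in $\Lambda^k[m]$ (hence is marked in $(\Lambda^k[m])^\sharp$) or equals the top simplex (hence is marked in $\Delta^k[m]$). In particular the $(k\pm1)$-faces are already marked in $P$, so there is an entire inclusion $\Delta^k[m]' \hookrightarrow P$, and the pushout of $P$ along the elementary thinness extension $\Delta^k[m]' \hookrightarrow \Delta^k[m]''$ of \cref{anodynemaps}(2) supplies the marking of the $k$-th face, producing $(\Delta[m])^\sharp$. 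The composite $(\Lambda^k[m])^\sharp \to P \to (\Delta[m])^\sharp$ is thus $\Lambda_n$-anodyne.

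The main obstacle is the combinatorial identification of the markings of $P$, which requires a dimension-by-dimension check and minor separate bookkeeping for the outer horn cases $k = 0$ and $k = m$ where the index $k\pm1$ may fall outside $[m]$. Notably, the argument uses only the elementary anodyne extensions of types (1) and (2) from \cref{anodynemaps}, so it is uniform in $n$ and does not invoke the saturation or triviality extensions; the dependence on $n$ enters only through the definition of $\Lambda_n$-anodyne as a closure.
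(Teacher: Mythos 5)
Your proof is correct and follows essentially the same route as the paper's: reduce via saturation to (pushouts of) horn inclusions, then factor $(\Lambda^k[m])^\sharp \to \Delta[m]^\sharp$ as a pushout of the complicial horn extension $\Lambda^k[m]\to\Delta^k[m]$ followed by a pushout of the thinness extension $\Delta^k[m]'\to\Delta^k[m]''$ that marks the remaining $k$-th face. The only organizational difference is that you reduce all the way to the generating horn inclusions themselves (using that $(-)^\sharp$ preserves the relevant colimits), whereas the paper stops at ``$X\to Y$ is a pushout of a horn inclusion'' and runs the same two-step factorization with the general $X^\sharp$ in place of $(\Lambda^k[m])^\sharp$; both variants rest on the same facts.
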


\begin{proof}
We first recall that any acyclic Kan cofibration is a retract of a transfinite composition of pushouts of horn inclusions, and that the maximal stratification $(-)^{\sharp}$ commutes with colimits. Thus, if $X\to Y$ is a retract of some 
$X_0\to \colim X_i$ with $X_i \to X_{i+1}$ being a pushout of a horn inclusion, then $X^{\sharp} \to Y^{\sharp}$ is a retract of $X_0^{\sharp}\to \colim (X_i^{\sharp})$. Without loss of generality, we can therefore assume that $X\to Y$ is a pushout of a horn inclusion of the form
\[
\begin{tikzcd}
\Lambda^j[m] \arrow[d, hook]\arrow[r, "f"]& X \arrow[d]\\
\Delta[m]\arrow[r]& Y,
\end{tikzcd}
\]
and show that $X^{\sharp} \to Y^{\sharp}$ is a $\Lambda_n$-anodyne extension in $\pstrat$.

If $P$ denotes the pushout in $\pstrat$ of $f$ along a horn anodyne extension as displayed
\[
\begin{tikzcd}
\Lambda^j[m] \arrow[d, hook]\arrow[r, "f"]& X^{\sharp} \arrow[d]\\
\Delta^j[m]\arrow[r, "f'" swap]& P,
\end{tikzcd}
\]
then $X^\sharp \to Y^\sharp$ factors as
$$X^\sharp\to P\to Y^\sharp.$$
While the first map is a $\Lambda_n$-anodyne extension in $\pstrat$ by definition, we now argue that also the second map is one.

A direct verification shows that the map $P\to Y^\sharp$ is an entire inclusion, and that there is exactly one simplex in $P$ that is marked in $Y$ and not in $P$, namely the $j$-th face of the $m$-simplex $f'\colon \Delta^j[m]\to P$. In particular, the inclusion $P\to Y$ fits into a pushout square
\[
\begin{tikzcd}
\Delta^j[m]' \arrow[d, hook]\arrow[r, "f'"]& P \arrow[d]\\
\Delta^j[m]''\arrow[r, "f'" swap]& Y^{\sharp},
\end{tikzcd}
\]
and is in particular a $\Lambda_n$-anodyne extension in $\pstrat$.
\end{proof}

\begin{proof}[Proof of \cref{cosimplicialresolution}]
We check Conditions (1)-(2) of being a $W(\Lambda_n)$-resolution.
\begin{enumerate}
    \item The fact that Condition (1) holds is clear.
    \item The projection
$$X\times\Delta[k]^\sharp\to X\cong  X \times \Delta[0]^{\sharp},$$
is a weak equivalence if and only if the cofibration
$$X \cong X \times \Delta[0]^{\sharp} \xrightarrow{\id _X\times {[0]}} X\times\Delta[k]^{\sharp}$$
 which is a right inverse, is one.
Given that the model category $\pstrat$ is cartesian from \cref{modelstructureondiscretepresheaves}, it is enough to know that the cofibration $\Delta[0]^{\sharp}\to \Delta[k]^{\sharp}$ is a weak equivalence, and this follows from \cref{KanAcyclicCof}. This concludes the proof of Condition (2).\qedhere
\end{enumerate}
\end{proof}

We can now prove the second Quillen equivalence.

\begin{proof}[Proof of \cref{secondQuillenequivalence}]
This is an application of the same variant of \cite[Theorem 4.11]{Ara}(2) that was discussed in the proof of \cref{QE1}.
For this, we only need to know that $t\Delta$ is a regular skeletal Reedy category, which was proven in \cref{tDeltasuperReedy}, and that $\Delta[\bullet]^\sharp$ is a $W(\Lambda_n)$-resolution, which was proven in \cref{cosimplicialresolution}.
\end{proof}

\appendix

\section{Technical tools on model structures}
\addtocontents{toc}{\protect\setcounter{tocdepth}{1}}

Let $A$ be a small category, with $t\Delta$ as a motivating example. We recall the Cisinski model structure from \cite[\textsection 1.3]{cisinski} on $\psh{A}$, and the Bousfield localization of the injective model structure on $\spsh{A}$ in terms of localizers, as in \cite{Ara}.

We denote by $A[a]$ the presheaf represented by an object $a$, and by $*$ the presheaf constant at a singleton, which is terminal in $\psh{A}$.

\subsection{The Cisinski model structure on discrete presheaves}

\label{cisinskiappendix}

Consider the category $\psh{A}$ of discrete presheaves over $A$.

The following is a special case of \cite[Proposition I.6.1]{MacLaneMoerdijk} (and its proof).

\begin{prop}
\label{presheafcartesianclosed} 
The category $\psh{A}$ is cartesian closed, with internal hom between a presheaf $Y$ and a presheaf $Z$ given componentwise by
$$(\ihom{Y}{Z})_{a}:=\Hom_{\psh{A}}(A[a]\times Y,Z).$$
\end{prop}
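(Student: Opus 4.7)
The plan is to verify directly that the stated formula defines the right adjoint to $(-)\times Y\colon\psh{A}\to\psh{A}$, by constructing the adjunction bijection
$$\Hom_{\psh{A}}(X\times Y,Z)\cong\Hom_{\psh{A}}(X,\ihom{Y}{Z})$$
natural in $X$, $Y$, $Z$, where $\ihom{Y}{Z}$ has the prescribed components.

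First I would verify that $a\mapsto\Hom_{\psh{A}}(A[a]\times Y,Z)$ assembles into an actual presheaf on $A$. Contravariant functoriality is induced by precomposition: a morphism $f\colon a'\to a$ of $A$ yields via Yoneda a map $A[a']\to A[a]$, hence $A[a']\times Y\to A[a]\times Y$, and precomposition defines the structure map $(\ihom{Y}{Z})_{a}\to(\ihom{Y}{Z})_{a'}$. The functoriality axioms are inherited from those of the Yoneda embedding $A\to\psh{A}$.

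Next, I would construct the adjunction bijection explicitly. Given $\phi\colon X\times Y\to Z$, define $\tilde\phi\colon X\to\ihom{Y}{Z}$ by sending $x\in X_a$ to the composite $A[a]\times Y\xrightarrow{\bar x\times\id_Y}X\times Y\xrightarrow{\phi}Z$, where $\bar x\colon A[a]\to X$ is the morphism of presheaves corresponding to $x$ under the Yoneda lemma. Conversely, given $\psi\colon X\to\ihom{Y}{Z}$, define $\check\psi\colon X\times Y\to Z$ pointwise: on $(x,y)\in X_a\times Y_a$, evaluate the element $\psi_a(x)\in\Hom_{\psh{A}}(A[a]\times Y,Z)=(\ihom{Y}{Z})_{a}$ at the pair $(\id_a,y)\in A[a]_a\times Y_a=(A[a]\times Y)_a$. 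The fact that these two assignments are mutually inverse and natural in all three variables is a direct verification using Yoneda; the key point is that an element of $X_a$ is the same as a map $A[a]\to X$, and composition with $\bar x$ corresponds to evaluation at $\id_a$.

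There is no serious obstacle here: the entire statement is a routine instance of the general principle that presheaf categories are cartesian closed because products preserve colimits and every presheaf is a colimit of representables. The only mildly delicate point is notational bookkeeping around the Yoneda identifications, which the construction above makes explicit.
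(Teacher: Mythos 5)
Your proof is correct and is precisely the standard Yoneda-based argument that the paper itself defers to: the paper gives no proof of this proposition, instead citing \cite[Proposition I.6.1]{MacLaneMoerdijk}, whose proof is exactly your construction of the adjunction bijection via evaluation at $(\id_a,y)$ and whiskering with $\bar x\times\id_Y$. Nothing is missing beyond the routine naturality checks you correctly flag as direct verifications.
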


Let $\mathbb I$ be an object of $\psh{A}$ endowed with two distinguished inclusions
 $\partial_\varepsilon\colon * \to \mathbb{I}$ for $\varepsilon=0,1$ so 
that the induced map $\partial_0\sqcup \partial_1 \colon * \sqcup * \to \mathbb{I}$
is a monomorphism. The motivating example is the object $\Delta[1]_t$ of $\psh{t\Delta}$, endowed with the two canonical inclusions $\Delta[0]\to \Delta[1]_t$. Then, as mentioned in \cite[Exemple 1.3.8]{cisinski}, the functor $\mathbb I\times-$ is a \emph{cylinder} in the sense of \cite[D\'efinition 1.3.1]{cisinski}.

This guarantees that one can define the usual notion of homotopy and homotopy equivalence.

\begin{defn}[{\cite[D\'efinition 1.3.3]{cisinski}}]
\label{I-homotopic}
Two morphisms $u_{\varepsilon}\colon X\to Y$ for $\varepsilon=0,1$ in $\psh{A}$ are called
\begin{enumerate}
    \item \emph{elementarily $\mathbb I$-homotopic} if there exists an \emph{$\mathbb I$-homotopy} between them, ie, a morphism $H\colon X\times \mathbb{I} \to Y$ so that $H\circ \partial_\varepsilon=u_\varepsilon$ for $\varepsilon=0,1$.
    \item \emph{$\mathbb I$-homotopic} if there exists a zig-zag of $\mathbb I$-homotopies between them.
    \end{enumerate}
\end{defn}

The $\mathbb I$-homotopy relation is the equivalence relation generated by the elementary $\mathbb I$-homotopy relation. 
Given that the $\mathbb I$-homotopy relation is suitably compatible with composition, 
it makes sense to define the $\mathbb{I}$-homotopy category $\Ho_{\mathbb I}(\psh{A})$ of $\psh{A}$, as the quotient of $\psh{A}$ by the $\mathbb I$-equivalence relation.

\begin{defn}[{\cite[\textsection 1.3.4]{cisinski}}]
\label{homotopyequivalence}
A morphism $u\colon X \to Y$ in $\psh{A}$ is an $\mathbb I$-homotopy equivalence if it becomes an isomorphism in the $\mathbb I$-homotopy category.
\end{defn}

The following terminology is equivalent to that from \cite[D\'efinition 1.3.21]{cisinski}. The interested reader can see the proof of the equivalences of our terminology with Cisinski's in \cref{lemmalocalobjects} and \cref{localvscisinskiequivalences}.

\begin{defn}\label{defLocal}
Let $\Lambda$ be a set of morphisms of $\psh{A}$.
\begin{itemize}
\item An object $X$ of $\psh{A}$ is \emph{$\Lambda$-local} if for any $\alpha\colon J \to J'$ in $\Lambda$, the induced map on internal homs
\[
\alpha^*\colon\ihom{J'}{X}\to\ihom{J}{X}
\]
is an acyclic fibration, ie, it has the right lifting property with respect to all monomorphisms.
\item A map $f\colon Y\to Y'$ in $\psh{A}$ is a \emph{$\Lambda$-local weak equivalence} if for every $\Lambda$-local object $X$ the map
\[
f^*\colon\ihom{Y'}{X}\to\ihom{Y}{X}
\]
is an $\mathbb I$-homotopy equivalence.
\end{itemize}
\end{defn}

Cisinski gives conditions 
on the cylinder object $\mathbb I$ and on the set $\Lambda$ for $\psh{A}$ to support a model structure. These conditions are given in terms of ``anodyne extensions''.

\begin{defn}
\label{saturation}
Let $\Lambda$ be a set of morphisms in $\psh{A}$.
The \emph{saturation} of $\Lambda$, or the \emph{class of $\Lambda$-anodyne extensions}, is the class of morphisms that have the left lifting property with respect to all morphisms having right lifting property with respect to $\Lambda$.

The small object argument
shows that a morphism is a \emph{$\Lambda$-anodyne extension} if and only if it can be written as a retract of transfinite compositions of pushouts of
morphisms of $\Lambda$.
\end{defn}

\begin{defn}
\label{generateelementaryanodyne}
\label{anodynegenerate}
Let $\Lambda$ be a set of morphisms in $\psh{A}$. The set $\Lambda$ is a \emph{set of elementary anodyne extensions}, and we say that it \emph{generates a class of anodyne extensions}, relative to $\mathbb I$, if the following conditions are met. 
\begin{enumerate}
    \item The pushout-product of a monomorphism $K\to L$ and either inclusion $\partial_\varepsilon\colon*\to\mathbb{I}$,
$$(K\times\mathbb{I})\aamalg{K\times *}(L\times *)\to L\times\mathbb I,$$
     is a $\Lambda$-anodyne extension.
    \item The pushout-product of the inclusion 
   $\partial_0\sqcup \partial_1 \colon * \sqcup * \to \mathbb{I}$ and an elementary $\Lambda$-anodyne extension $I\to J$,
$$(I\times\mathbb I) \aamalg{I\times (*\amalg*)}\left(J\times (*\amalg*)\right)\to J\times\mathbb I,$$
   is a $\Lambda$-anodyne extension.
\end{enumerate}

\end{defn}

The following theorem is a special case of \cite[Th\'eor\`eme 1.3.22]{cisinski}.

\begin{thm}
\label{cisinskimodelstructure}
If the set $\Lambda$ is a set of elementary anodyne extensions relative to $\mathbb{I}$,
then the category $\psh{A}$ supports a cofibrantly generated model structure where
\begin{itemize}
\item the cofibrations are precisely the monomorphisms.
\item the fibrant objects are precisely the $\Lambda$-local objects.
\item the weak equivalences are precisely the $\Lambda$-local maps.
\end{itemize}
We call this a \emph{Cisinski model structure} on $\psh{A}$.
\end{thm}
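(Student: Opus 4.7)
The plan is to invoke Cisinski's \cite[Th\'eor\`eme 1.3.22]{cisinski} directly, after unpacking that our hypotheses correspond to his notion of a set of elementary anodyne extensions relative to a cylinder, and after identifying our notions of $\Lambda$-local object and $\Lambda$-local weak equivalence with those in Cisinski's framework.

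First I would observe that the functor $\mathbb I\times-$ is a functorial cylinder on $\psh{A}$ in the sense of \cite[D\'efinition 1.3.1]{cisinski}: the presheaf category $\psh{A}$ is cartesian closed by \cref{presheafcartesianclosed}, the map $\partial_0\sqcup\partial_1\colon *\amalg *\to\mathbb I$ is a monomorphism by assumption, and the two maps $\partial_\varepsilon\colon*\to\mathbb I$ together with the canonical projection $\mathbb I\to *$ provide the structure of a cylinder. Next, I would check that conditions (1) and (2) in \cref{anodynegenerate} match exactly Cisinski's requirements in \cite[D\'efinition 1.3.10]{cisinski} for $\Lambda$ to generate a class of anodyne extensions with respect to the cylinder $\mathbb I\times-$, so that the hypotheses of \cite[Th\'eor\`eme 1.3.22]{cisinski} are satisfied. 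The output of that theorem is a cofibrantly generated model structure on $\psh{A}$ whose cofibrations are the monomorphisms, whose fibrant objects are those satisfying Cisinski's lifting condition, and whose weak equivalences are those which induce $\mathbb I$-homotopy equivalences on internal homs into Cisinski-fibrant objects.

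What remains is a bookkeeping step: to show that Cisinski's fibrant objects and weak equivalences coincide with our $\Lambda$-local objects and $\Lambda$-local weak equivalences from \cref{defLocal}. This is the content of the two lemmas invoked in the statement of \cref{defLocal}, namely \cref{lemmalocalobjects} and \cref{localvscisinskiequivalences}. I would prove them via the standard pushout-product manipulation: for the local-object comparison, observe that the map $\alpha^*\colon\ihom{J'}{X}\to\ihom{J}{X}$ has the right lifting property against a monomorphism $K\hookrightarrow L$ if and only if $X$ has the right lifting property against the pushout-product $(K\times J')\amalg_{K\times J}(L\times J)\to L\times J'$, which by the first condition in \cref{anodynegenerate} is a $\Lambda$-anodyne extension, so both conditions agree with having the right lifting property against all $\Lambda$-anodyne extensions; a similar pushout-product argument handles the identification of weak equivalences.

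The only step requiring some care is the lemma identifying our notions of local object and weak equivalence with Cisinski's, since the rest is a literal citation. In particular, one must be mindful that Cisinski's definition of weak equivalence is framed in terms of the $\mathbb I$-homotopy category, so the translation goes through the fact that an acyclic fibration of presheaves over $A$ is automatically an $\mathbb I$-homotopy equivalence, a fact already used in the paper via \cite[Prop.~1.3.26]{cisinski}. With that bookkeeping in place, the three bullet points in the statement are precisely what \cite[Th\'eor\`eme 1.3.22]{cisinski} yields.
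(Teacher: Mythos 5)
Your proposal matches the paper's treatment: the theorem is stated there as a direct special case of \cite[Th\'eor\`eme 1.3.22]{cisinski}, with the only work being the identification of the paper's $\Lambda$-local objects and $\Lambda$-local weak equivalences with Cisinski's notions, which is exactly the content of \cref{lemmalocalobjects} and \cref{localvscisinskiequivalences} and is proved there by the same pushout-product/adjunction bookkeeping you describe. No gaps.
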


We prove the following characterization of local objects.

\begin{prop}
\label{lemmalocalobjects}
Let $\Lambda$ be a class of elementary anodyne extensions relative to $\mathbb{I}$. Suppose moreover that the pushout-product of any $\Lambda$-anodyne extension $I\to J$ and any monomorphism $K\to L$,
$$(I\times L)\aamalg{I\times K}(J\times K)\to J\times L,$$
is an $\mathbb I$-anodyne extension.
For a presheaf $X$ on $A$, the following are equivalent.
\begin{enumerate}
\item The object $X$ has the right lifting property with respect to $\Lambda$.
\item\label{RLPAn} The object $X$ has the right lifting property with respect to all $\Lambda$-anodyne extensions.
\item \label{LocalStrong} The object $X$ is $\Lambda$-local, ie for any $\alpha\colon J \to J'$ in $\Lambda$, the induced map on internal homs
\[
\alpha^*\colon\ihom{J'}{X} \to \ihom{J}{X}
\]
is an acyclic fibration.
    \item The object $X^Y$ is $\Lambda$-local for any $Y\in\psh{A}$.
    \label{fibrancyexponential}
\end{enumerate}
\end{prop}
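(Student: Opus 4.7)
The plan is to prove the cycle $(1)\Leftrightarrow(2)\Leftrightarrow(3)\Leftrightarrow(4)$, with all nontrivial implications coming from a combination of the standard hom-product adjunction in $\psh{A}$ and the hypothesis on pushout-products with monomorphisms.

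First I would show $(1)\Leftrightarrow(2)$ by a routine saturation argument: the class of morphisms with respect to which $X\to*$ has the right lifting property is closed under pushouts, transfinite compositions, and retracts, hence is saturated, so it contains $\Lambda$ if and only if it contains its saturation, which by \cref{saturation} is the class of $\Lambda$-anodyne extensions.

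Next I would prove $(2)\Leftrightarrow(3)$ via the adjunction $(-)\times J\dashv (-)^J$ from \cref{presheafcartesianclosed}. For the interesting direction $(2)\Rightarrow(3)$, let $\alpha\colon J\to J'$ lie in $\Lambda$ and $K\hookrightarrow L$ be a monomorphism. A lifting problem of $\alpha^*$ against $K\hookrightarrow L$ is adjoint to a lifting problem of $X\to*$ against the pushout-product
\[
(J'\times K)\aamalg{J\times K}(J\times L)\to J'\times L.
\]
Since $\alpha$ is $\Lambda$-anodyne and $K\hookrightarrow L$ is a monomorphism, the assumption of the proposition tells us this pushout-product is itself $\Lambda$-anodyne, so (2) supplies the lift. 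The converse $(3)\Rightarrow(1)$ is obtained by taking the monomorphism $\varnothing\hookrightarrow *$, which unpacks to exactly the statement that $X$ has the right lifting property with respect to $\alpha$.

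Finally, for $(3)\Leftrightarrow(4)$, the implication $(4)\Rightarrow(3)$ is obtained by setting $Y=*$ and using $X^*\cong X$. For $(3)\Rightarrow(4)$, I would fix $Y\in\psh{A}$ and $\alpha\colon J\to J'$ in $\Lambda$, and verify that the induced map $(X^Y)^{J'}\to(X^Y)^J$ is an acyclic fibration. Using the canonical isomorphism $(X^Y)^Z\cong X^{Y\times Z}$, this becomes the map $X^{Y\times J'}\to X^{Y\times J}$, and a lifting problem against a monomorphism $K\hookrightarrow L$ is adjoint, via the same calculation as above, to a lifting problem for $X\to *$ against the pushout-product of $K\times Y\hookrightarrow L\times Y$ with $\alpha$; this is once more $\Lambda$-anodyne by hypothesis, and the lift exists by $(2)$. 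The only step that is not purely formal is the repeated appeal to the pushout-product hypothesis, but since it is taken as an assumption here, no genuine obstacle arises; the content of the proposition is really the packaging of the adjunction with that closure property.
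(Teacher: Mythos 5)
Your proposal is correct and follows essentially the same strategy as the paper's proof: the same saturation argument for $(1)\Leftrightarrow(2)$ and the same adjunction/pushout-product manipulations for the remaining implications. The only (cosmetic) difference is in $(3)\Rightarrow(4)$, where you verify the acyclic-fibration condition for $X^Y$ directly via $(X^Y)^Z\cong X^{Y\times Z}$ and the pushout-product hypothesis applied to the monomorphism $K\times Y\hookrightarrow L\times Y$, whereas the paper first shows that $X^Y$ has the right lifting property with respect to all $\Lambda$-anodyne extensions (using that $\varnothing\to Y$ is a monomorphism) and then invokes the already-established equivalence $(2)\Leftrightarrow(3)$ for $X^Y$; both are valid.
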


\begin{proof}
The equivalence of (1) and (2) is a standard consequence of Quillen's small object argument. We now show that (2) is equivalent to (3), and that (3) is equivalent to (4).

In order to show that (2) implies (3), let $K\hookrightarrow L$ be a monomorphism in $\psh{A}$, and consider the lifting problem
\[
\begin{tikzcd}
K \arrow[r]\arrow[d, hook] & \ihom{J'}{X} \arrow[d, "\alpha^*"]\\
L\arrow[r] \arrow[ur, dashed, "?"] & \ihom{J}{X}
\end{tikzcd}
\]
By adjointness, the lifting problem is equivalent to the following one
\[
\begin{tikzcd}
K\times J' \aamalg{K\times J} L\times J \arrow[r]\arrow[d, hook] & X. \\
L\times J'  \arrow[ur, dashed, "?" swap] &
\end{tikzcd}
\]
Given that the vertical arrow is a $\Lambda$-anodyne extension by assumption, a lift exists.

In order to show that (3) implies (2), let $J\to J'$ be a $\Lambda$-anodyne extension, and consider the lifting problem
\[
\begin{tikzcd}
J \arrow[r]\arrow[d, hook] & X. \\
J'  \arrow[ur, dashed, "?" swap] &
\end{tikzcd}
\]
By adjointness, the lifting problem is equivalent to the following one
\[
\begin{tikzcd}
  &  \ihom{J'}{X} \arrow[d]\\
*  \arrow[ur, dashed, "?" ]\arrow[r] & \ihom{J}{X}.
\end{tikzcd}
\]
Given that $*$ is cofibrant and the vertical arrow is an acyclic fibration by assumption, a lift exists.

In order to show that (3) implies (4), and therefore (3) is equivalent to (4), let $J\to J'$ be a $\Lambda$-anodyne extension, and consider the lifting problem
\[
\begin{tikzcd}
J \arrow[r]\arrow[d, hook] &  \ihom{Y}{X}. \\
J'  \arrow[ur, dashed, "?" swap] &
\end{tikzcd}
\]
By adjointness, the lifting problem is equivalent to the following one
\[
\begin{tikzcd}
 &  \ihom{J'}{X} \arrow[d] \\
Y \arrow[ur, dashed, "?"]\arrow[r] &\ihom{J}{X}.
\end{tikzcd}
\]
Given that $Y$ (as well as any other object) is cofibrant and the vertical arrow is an acyclic fibration by assumption, a lift exists.
\end{proof}

We prove the following characterization of local objects.

\begin{prop} \label{localvscisinskiequivalences}
Let $\Lambda$ be a class of elementary anodyne extensions relative to $\mathbb{I}$. Suppose moreover that the pushout-product of any $\mathbb I$-anodyne extension $I\to J$ and any monomorphism $K\to L$,
$$(I\times L)\aamalg{I\times K}(J\times K)\to J\times L,$$
is a $\Lambda$-anodyne extension.
For a map $f\colon Y\to Y'$ in $\psh{A}$ the following are equivalent:
\begin{enumerate}
\item the map $f$ is a $\Lambda$-local weak equivalence, ie, for every $\Lambda$-local object $X$ the map 
\[
f^*\colon\ihom{Y'}{X}\to\ihom{Y}{X}
\]
is an $\mathbb I$-homotopy equivalence.
\item for every $\Lambda$-local object $X$ the map
\[
f^*\colon\ihom{Y'}{X}\to\ihom{Y}{X}
\]
induces an isomorphism on $\mathbb{I}$-homotopy classes.
\end{enumerate}
\end{prop}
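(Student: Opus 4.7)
The plan is to express both conditions as the statement that $f^*$ becomes an isomorphism in the $\mathbb I$-homotopy category $\Ho_{\mathbb I}(\psh A)$, and to bridge them via a Yoneda argument in that category, relying crucially on the closure property from \cref{lemmalocalobjects}(4): under the standing pushout--product hypothesis, the internal hom $X^T$ is $\Lambda$-local whenever $X$ is, for any presheaf $T$.

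The implication (1)$\Rightarrow$(2) is immediate. An $\mathbb I$-homotopy equivalence is by definition an isomorphism in $\Ho_{\mathbb I}(\psh A)$, and therefore induces a bijection on hom sets $[\ast,-]_{\mathbb I}$ out of the terminal object, which is precisely a bijection on $\mathbb I$-homotopy classes. For the converse (2)$\Rightarrow$(1), I fix a $\Lambda$-local $X$. By Yoneda in $\Ho_{\mathbb I}(\psh A)$, showing that $f^*\colon X^{Y'}\to X^Y$ is an $\mathbb I$-homotopy equivalence amounts to checking that
\[
[T,X^{Y'}]_{\mathbb I}\longrightarrow [T,X^Y]_{\mathbb I}
\]
is a bijection for every $T\in \psh A$. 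The tensor--hom adjunction then identifies this map with $[Y',X^T]_{\mathbb I}\to [Y,X^T]_{\mathbb I}$; since $X^T$ is itself $\Lambda$-local by \cref{lemmalocalobjects}(4), applying hypothesis (2) to the local object $X^T$ closes the argument.

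The main technical point requiring care is the claim that the cartesian closed adjunction descends to $\mathbb I$-homotopy classes. Concretely, one must verify that an $\mathbb I$-homotopy $T\times\mathbb I\to X^{Y'}$ transposes to an $\mathbb I$-homotopy $Y'\times\mathbb I\to X^T$, both arising from a common map $T\times Y'\times\mathbb I\to X$, and that this transposition respects the equivalence relation generated by elementary $\mathbb I$-homotopies. This should be a formal consequence of the symmetry and associativity of the cartesian product in $\psh A$ together with the fact that $\mathbb I\times(-)$ is a cylinder in the sense of \cite[D\'efinition 1.3.1]{cisinski}; the verification is routine and parallels the corresponding enrichment argument already implicit in \cref{lemmalocalobjects}.
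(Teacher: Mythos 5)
Your proposal is correct and follows essentially the same route as the paper: Yoneda in the $\mathbb I$-homotopy category, the descent of the tensor--hom adjunction to $\mathbb I$-homotopy classes (the paper's \cref{homotopyhomadjunction}), and the closure of $\Lambda$-local objects under internal homs from \cref{lemmalocalobjects}\ref{fibrancyexponential}. The only cosmetic difference is that you spell out the easy implication (1)$\Rightarrow$(2) and explicitly flag the verification that the adjunction passes to homotopy classes, both of which the paper leaves implicit.
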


\begin{rmk}
\label{homotopyhomadjunction}
We observe that the natural isomorphism
$$\Hom_{\psh{A}}(X \times Y, Z)\cong\Hom_{\psh{A}}(X, \ihom{Y}{Z}),$$
that witnesses
the adjunction between the cartesian product and the internal hom on $\psh{A}$, induces a natural bijection
$$\Hom_{\Ho_{\mathbb I}(\psh{A})}(X \times Y, Z)\cong\Hom_{\Ho_{\mathbb I}(\psh{A})}(X, \ihom{Y}{Z}),$$
at the level of homsets of the~$\mathbb{I}$-homotopy category $\Ho_{\mathbb I}(\psh{A})$ of $\psh{A}$.
\end{rmk}

\begin{proof}[Proof of \cref{localvscisinskiequivalences}]
We show that (2) implies (1). By the Yoneda Lemma, $f^*\colon\ihom{Y'}{X}\to\ihom{Y}{X}$
is an $\mathbb I$-homotopy equivalence if and only if for any $Z\in\psh{A}$ it induces a natural bijection 
$$\Hom_{\Ho_{\mathbb I}(\psh{A})}(Z, \ihom{Y}{X})\cong\Hom_{\Ho_{\mathbb I}(\psh{A})}(Z, \ihom{Y'}{X}).$$
Using \cref{homotopyhomadjunction} once the bijection becomes
$$\Hom_{\Ho_{\mathbb I}(\psh{A})}(Z\times Y,X)\cong\Hom_{\Ho_{\mathbb I}(\psh{A})}(Z\times Y',X),$$
and using it again yields
$$\Hom_{\Ho_{\mathbb I}(\psh{A})}(Y, \ihom{Z}{X})\cong\Hom_{\Ho_{\mathbb I}(\psh{A})}(Y', \ihom{Z}{X}).$$
By \cref{lemmalocalobjects}, any $X^Z$ is a $\Lambda$-local object, so it is enough to require the bijection 
$$\Hom_{\Ho_{\mathbb I}(\psh{A})}(Y, X)\cong\Hom_{\Ho_{\mathbb I}(\psh{A})}(Y', X),$$
which says precisely that $f$ induces an isomorphism when passing to $\mathbb I$-homotopy equivalence classes.
\end{proof}

\begin{rmk}
As a variant of \cite[Lemme 1.3.32]{cisinski}, it is easy to show that a map between local objects is a weak equivalence if and only if it is an $\mathbb I$-homotopy.
\end{rmk}

\subsection{Model structure on simplicial presheaves and localizers}

\label{Model structure on simplicial presheaves and localizers}

Let $A$ be a Reedy category (see eg~\cite[\textsection 15.1]{Hirschhorn}), ie, a category endowed with two subcategories $A_+$ and $A_-$ each containing all objects of $A$ and a degree function $\degr\colon\Ob(A) \to \mathbb Z_{\ge 0}$ such that
\begin{itemize}[leftmargin=*]
    \item every non-identity morphism in $A_+$ raises the degree,
    \item every non-identity morphism in $A_-$ lowers the degree, and
    \item every morphism in $A$ factors uniquely as a map in $A_-$ followed by a map in $A_+$.
\end{itemize}
We will furthermore assume that $A$ is a
``regular skeletal Reedy category'' in the sense of \cite[\textsection1.4]{Ara} or, equivalently, a ``cat\'{e}gorie squelettique r\'{e}guli\`{e}re'' in the sense of \cite[D\'efinitions 8.1.1, 8.1.36, 8.2.2]{cisinski}. 
Examples are the usual simplex category $\Delta$, and the category $t\Delta$, as will be proven in \cref{AppendixReedy}.

\begin{defn}[{\cite[\textsection1.4]{Ara}}]
\label{skeletalregular}
A Reedy category $A$ is \emph{regular skeletal} if the following conditions hold. 
\begin{enumerate}
\item Every morphism of $A_-$ admits a section.
\item Two parallel morphisms of $A_-$ are equal if and only if they admit the same set of sections.
\item Every morphism of $A_+$ is a monomorphism.
\end{enumerate}
\end{defn}

\begin{rmk}\label{regularandReedy}
A Reedy category that meets Conditions (1) and (2) is said to be ``skeletal'', also known as ``EZ-Reedy category'' 
in \cite[Definition 4.1]{elegant}. In particular, any regular skeletal Reedy category is an EZ-Reedy category, and by \cite[Proposition 4.2]{elegant} any EZ-Reedy category is an elegant Reedy category (in the sense of \cite[Definition 3.5]{elegant}).
\end{rmk}

Cofibrantly generated model structures on $\psh{A}$ with cofibrations given by the class of monomorphisms can be described in terms of their class of weak equivalences.

\begin{defn}[{\cite[\textsection2.1]{Ara}}] 
An \emph{$A$-localizer} $W$ is a class of morphisms in $\psh{A}$ such that:
\begin{enumerate}
    \item the class $W$ satisfies the $2$-out-of-$3$ property;
    \item the class $W$ contains all acyclic fibrations, ie, all maps that have the right lifting properties with respect to monomorphisms;
    \item the class of acyclic cofibrations is stable under pushouts and transfinite composition.
\end{enumerate}
For a class $C$ of maps in $\psh{A}$, there exists a smallest $A$-localizer $W(C)$ containing $C$, which we call \emph{generated} by $C$. An $A$-localizer is called \emph{accessible} if it is generated by a set.
\end{defn}

It is proven in \cite[Th\'eor\`eme 1.4.3]{cisinski}, and recalled in \cite[Theorem 2.2]{Ara}, that accessible localizers characterize the classes of weak equivalences for certain model structures on $\psh{A}$.

\begin{thm}
\label{localizersandmodelstructures}
Let $W$ be a class of morphisms in $\psh{A}$. Then $W$ is an accessible $A$-localizer if and only if it is the class of weak equivalences of a cofibrantly generated model structure on $\psh{A}$ in which the cofibrations are precisely the monomorphisms.
\end{thm}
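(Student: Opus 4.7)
The two directions of this characterization have very different depths.

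For the direction that a cofibrantly generated model structure on $\psh{A}$ with monomorphisms as cofibrations gives rise to an accessible localizer, the argument is direct. The $2$-out-of-$3$ property for $W$ is part of the model category axioms. The acyclic fibrations in such a model structure coincide with the maps having the right lifting property with respect to monomorphisms, so these lie in $W$ tautologically. The class of acyclic cofibrations in any model category is closed under pushouts and transfinite composition. Finally, cofibrant generation ensures that $W$ is generated as a localizer by the set of generating acyclic cofibrations, yielding accessibility.

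For the converse, given an accessible $A$-localizer $W$ generated by a set $C$, the plan is to apply Jeff Smith's recognition theorem for combinatorial model categories. I would take the generating cofibrations to be the set $I$ of boundary inclusions $\partial A[a]\hookrightarrow A[a]$ for $a\in A$, which generates the monomorphisms as a saturated class thanks to the regular skeletal Reedy structure on $A$, and declare the weak equivalences to be $W$. The fibrations and acyclic fibrations are then forced by lifting. The crux is to exhibit a set $J$ of acyclic cofibrations whose saturation equals the intersection of $W$ with the monomorphisms. One produces such a $J$ by enlarging $C$ via pushout-products against the maps of $I$, and then invoking the accessibility of $W$, which implies that $W$ is closed under filtered colimits, via a cardinality argument to reduce to a small set.

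The main obstacle is verifying Smith's hypotheses, in particular that the maps having the right lifting property with respect to $J$ that also lie in $W$ coincide with the acyclic fibrations, namely those with the right lifting property with respect to all monomorphisms. One inclusion, that acyclic fibrations lie in $W$, is precisely one of the localizer axioms. The reverse inclusion, that a map lying in $W$ and having the right lifting property with respect to $J$ must in fact lift against every monomorphism, is obtained by the standard retract argument: factoring such a map through the small object argument applied to $I$ produces an acyclic cofibration followed by an acyclic fibration, and the closure of $W$ under the localizer operations forces the first factor to be a retract of a map in the saturation of $J$, which then makes the original map a retract of an acyclic fibration. Once Smith's hypotheses are verified, the theorem yields the desired combinatorial model structure, completing the characterization.
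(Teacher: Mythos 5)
The paper does not actually prove this statement: it is recalled verbatim from Cisinski (Th\'eor\`eme 1.4.3 of \cite{cisinski}, restated as Theorem 2.2 of \cite{Ara}), so the only honest comparison is with Cisinski's proof. Your first direction is essentially correct: $2$-out-of-$3$ is a model category axiom, the acyclic fibrations are exactly the maps with the right lifting property against monomorphisms, acyclic cofibrations are closed under pushout and transfinite composition, and a factorization plus $2$-out-of-$3$ argument (together with the fact that localizers are closed under retracts) shows $W$ is generated by the set of generating acyclic cofibrations.

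The converse, however, has a genuine gap at exactly the point where all the difficulty of the theorem is concentrated. To invoke Smith's recognition theorem you must verify that $W$ is an \emph{accessible class of morphisms} (closed under filtered colimits in the arrow category, or at least satisfying a solution-set condition at $I$); this is what powers the ``cardinality argument to reduce to a small set'' that you gesture at. But in this context ``accessible localizer'' means only that $W$ is the \emph{smallest} localizer containing some set $C$, i.e.\ an intersection of a priori unknown classes. Nothing in that definition gives you closure under filtered colimits or a solution set; the word ``accessible'' is doing double duty in your sketch, and establishing that $W(C)$ really is an accessible class is essentially equivalent to the theorem itself. Cisinski's route avoids this circularity: from the generating set $C$ he builds a concrete Cisinski model structure (choose a cylinder, form the class of anodyne extensions generated by $C$ together with the required pushout-products, and apply the analogue of \cref{cisinskimodelstructure}), whose weak equivalences form \emph{some} localizer containing $C$; the remaining work --- absent from your sketch --- is to show these weak equivalences lie in \emph{every} localizer containing $C$, so that they coincide with $W(C)$. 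You would need either to reproduce that argument or to supply an independent proof that $W(C)$ is an accessible class before Smith's theorem can be applied.
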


The following remark describes the injective model structure in terms of localizers.

\begin{rmk}
\label{Ara3.7} When $A$ is a regular skeletal Reedy category (eg $A=t\Delta$), by \cite[Theorem 3.7]{Ara} the class
    \[W_{inj}:=\{f\colon X\to Y\ |\ f_a\colon X_a\xrightarrow{\simeq} Y_a\text{ for all }a\in A\}\]
of vertical levelwise weak equivalences (also considered in \cite[\textsection2.16,\textsection3.6]{Ara}) is an accessible $(A\times\Delta)$-localizer. The corresponding model structure is the injective model structure on $\spsh{A}$ where $\sset$ is endowed with the Kan-Quillen model structure, given that the cofibrations are precisely the monomorphisms.
\end{rmk}

We recall the terminology and the construction of localizations of the injective model structure.

\begin{defn}\label{definitionlocal}
Let $\Lambda$ be a set of maps of $\spsh{A}$.
\begin{itemize}
\item An object $X$ is \emph{$\Lambda$-local} if it is injectively fibrant in $\spsh{A}$
and for every map $f\colon I\to J$ in $\Lambda$, the induced map
$$f^*\colon\Map(J,X)\to\Map(I,X)$$
is a weak equivalence in $\sset$.
\item A map $g\colon C\to D$ in $\spsh{A}$ is a \emph{$\Lambda$-local weak equivalence} if for every $\Lambda$-local object $X$, the induced map
$$g^*\colon\Map(D,X)\to\Map(C,X)$$
is a weak equivalence in $\sset$.
\end{itemize}
\end{defn}

We can also describe Bousfield localizations in terms of localizers.

\begin{rmk}
\label{localization}
Let $A$ be a regular skeletal Reedy category (eg $A=t\Delta$), $W$ an accessible $A\times\Delta$-localizer and $\Lambda$ a set of maps of $\spsh{A}$. Since the localizer generated by $W$ and $\Lambda$ is accessible, 
by \cite[Proposition A.11]{Ara}
the corresponding model structure is the Bousfield localization \cite[Theorem 4.46]{barwick2010} of the model structure corresponding to $W$ with respect to the set $\Lambda$.
In particular,
\begin{itemize}
\item the cofibrations are precisely the monomorphisms, and in particular all objects are cofibrant;
\item the fibrant objects are precisely the $\Lambda$-local objects;
\item the weak equivalences are precisely the $\Lambda$-local weak equivalences;
\item the weak equivalences between fibrant objects are precisely the levelwise weak equivalences.
\end{itemize}
\end{rmk}

\section{Technical results on anodyne extensions}
\label{TechnicalResultsAnodyne}
The aim of this section is to show that the pushout-product of an elementary anodyne extension $I\to J$ and a generating monomorphism $K\to L$,
$$p\colon(I\times L)\aamalg{I\times K}(J\times K)\to J\times L,$$
is an anodyne extension, in both $\strat$ and $\psh{t\Delta}$.

\begin{defn}
\label{anodyneinstrat}
A $\Lambda_n$-anodyne extension in $\strat$, respectively $\pstrat$, is a map in $\strat$, respectively $\pstrat$, that can be written as a retract of a transfinite composition of pushouts in $\strat$, respectively $\pstrat$, of the elementary $\Lambda_n$-elementary anodyne extensions from \cref{anodynemaps}.
\end{defn}

Given that pushouts in $\strat$ and $\pstrat$ are in general different (cf~\cref{counterexamplepushout}), the meaning of the source of $p$ and the question of whether $p$ is a $\Lambda_n$-anodyne extension a priori depend on the ambient category. We show that, in fact, they do not.

\begin{prop}
\label{PPinstratandpstrat}
If $I\to J$ and $K\to L$ are two monomorphisms in $\strat$,
the pushout in $\pstrat$ of 
\[
\begin{tikzcd}
I\times L &I\times K \arrow[l, hook'] \arrow[r, hook]& J\times K,
\end{tikzcd}
\]
is a stratified simplicial set. In particular, it is also a pushout in $\strat$.
\end{prop}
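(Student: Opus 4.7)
The plan is to embed the pushout $P$ of $I\times L\leftarrow I\times K\to J\times K$ in $\pstrat$ into the stratified simplicial set $J\times L$, which will force $P$ itself to be stratified. The key observation is that, although a pushout in $\pstrat$ of stratified simplicial sets can in general fail to be stratified (as in \cref{counterexamplepushout}), this particular pushout is bounded above by the stratified ambient object $J\times L$.

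First I would note that $J\times L$ is stratified: products in $\pstrat$ are computed componentwise, so $\thin{(J\times L)}{m}=\thin{J}{m}\times\thin{L}{m}$, and injectivity of $\varphi_J$ and $\varphi_L$ gives injectivity of $\varphi_J\times\varphi_L$. The inclusions $I\hookrightarrow J$ and $K\hookrightarrow L$ produce compatible maps $I\times L\hookrightarrow J\times L$ and $J\times K\hookrightarrow J\times L$ which agree on $I\times K$, and hence factor through a canonical map $P\to J\times L$. I would then check levelwise that this map is a monomorphism using the elementary set-theoretic fact that if $B,C\subseteq D$ satisfy $B\cap C=A$, then the pushout $B\cup_A C$ injects into $D$ as $B\cup C$. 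Applied with $A=I_m\times K_m$, $B=I_m\times L_m$, $C=J_m\times K_m$, $D=J_m\times L_m$, and using the identity $(I_m\times L_m)\cap(J_m\times K_m)=I_m\times K_m$ inside $J_m\times L_m$, this yields $P_m\hookrightarrow J_m\times L_m$. The verbatim argument applied to marked simplices (using that monomorphisms in $\strat$ are componentwise injective, so $\thin{I}{m}\hookrightarrow\thin{J}{m}$ and $\thin{K}{m}\hookrightarrow\thin{L}{m}$) yields $\thin{P}{m}\hookrightarrow\thin{J}{m}\times\thin{L}{m}$.

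Finally I would assemble the commutative square
\[
\begin{tikzcd}
\thin{P}{m}\arrow[r,hook]\arrow[d,"\varphi"]&\thin{J}{m}\times\thin{L}{m}\arrow[d,hook,"\varphi\times\varphi"]\\
P_m\arrow[r,hook]&J_m\times L_m
\end{tikzcd}
\]
and conclude that $\varphi\colon\thin{P}{m}\to P_m$ is injective: the composition along the top and right edges is injective, and the bottom arrow is injective, which forces the left arrow to be so. This is exactly the stratification condition on $P$. For the last clause, since $\strat\subseteq\pstrat$ is reflective, any pushout in $\pstrat$ that already lies in $\strat$ is automatically the pushout in $\strat$ as well (the reflector $R$ leaves it unchanged). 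I do not anticipate a serious obstacle: the main insight, and the only really substantive step, is spotting the containment $P\subseteq J\times L$, after which everything reduces to a routine set-theoretic manipulation applied twice.
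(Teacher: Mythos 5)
Your proof is correct and follows essentially the same route as the paper's: both arguments identify the canonical map $P\to J\times L$ as the pushout-product of the two monomorphisms, verify levelwise in $\set$ that it is injective, and conclude that $P$, being a sub-prestratified simplicial set of the stratified object $J\times L$, is itself stratified. The paper is merely more terse, citing the standard fact that pushout-products of monomorphisms of presheaves are monomorphisms rather than spelling out the set-theoretic verification as you do.
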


\begin{proof}
The pushout-product of the two monomorphisms $I\to J$ and $K\to L$,
$$p\colon(I\times L)\aamalg{I\times K}(J\times K)\to J\times L,$$
is always a monomorphism in $\psh{A}$. Indeed, this is true in $\set$, and product constructions, pushout constructions, and the property of being a monomorphism are all checked levelwise.
This means that the prestratified simplicial set in the left-hand side is included in the stratified simplicial set $J\times L$, and it is therefore one, too.
\end{proof}

\begin{thm}
\label{anodyneinStratisanodyneinpStrat}
For $n\ge0$, any $\Lambda_n$-anodyne extension in $\strat$ is also a $\Lambda_n$-anodyne extension in $\pstrat$.
\end{thm}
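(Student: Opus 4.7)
Proof proposal: The plan is to show that the subclass $\mathcal{A} \subseteq \Mor(\strat)$ consisting of maps in $\strat$ that are $\Lambda_n$-anodyne extensions in $\pstrat$ contains the elementary anodyne maps of $\Lambda_n$ and is closed in $\strat$ under retracts, transfinite compositions, and pushouts along arbitrary maps. Since by \cref{anodyneinstrat} the $\Lambda_n$-anodyne extensions in $\strat$ form the smallest such class, the desired inclusion then follows.

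Containment of $\Lambda_n$ is tautological. Closure under retracts is automatic because $\strat$ is a full subcategory of $\pstrat$. Closure under transfinite compositions in $\strat$ follows from part (2) of \cref{rmkretracts}: filtered colimits in $\strat$ and $\pstrat$ coincide, so a transfinite composition in $\strat$ of maps in $\mathcal{A}$ is at the same time a transfinite composition in $\pstrat$ of $\Lambda_n$-anodyne extensions.

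The heart of the argument is closure under pushouts in $\strat$, and it suffices to treat pushouts of elementary anodyne maps $\alpha\colon I \to J$ in $\Lambda_n$. Given a pushout square in $\strat$
\[
\begin{tikzcd}
I \arrow[r, "f"]\arrow[d, "\alpha", hook] & X\arrow[d, "\beta"]\\
J \arrow[r]& Y,
\end{tikzcd}
\]
with $X$ stratified, let $\tilde Y$ denote the pushout of the same span computed in $\pstrat$, so that $Y = R(\tilde Y)$ by \cref{categoricalproperties}. The map $X \to \tilde Y$ in $\pstrat$ is a pushout of $\alpha$, hence $\Lambda_n$-anodyne in $\pstrat$; when $\tilde Y$ is already stratified the canonical comparison $\tilde Y \to Y$ is an isomorphism and $\beta$ itself is $\Lambda_n$-anodyne in $\pstrat$. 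For $\alpha$ a complicial horn extension of type (1) of \cref{anodynemaps}, this is always the case by \cref{RegularPushout}. For entire inclusions of types (2), (3), and (4) of \cref{anodynemaps}, the object $\tilde Y$ may fail to be stratified, precisely when $f$ sends some non-degenerate simplex that is marked for the first time in $J$ to an already-marked simplex of $X$, producing a multi-label that is identified by $R$.

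To handle this, one replaces the span by an enlarged one $\bar\alpha\colon \bar I \to \bar J$: take $\bar I$ to be $I$ with additional markings on those non-degenerate simplices whose $f$-image is already marked in $X$, and $\bar J$ to be $J$ with the same additional markings. Then $f$ factors as a marking-preserving map $\bar f\colon \bar I \to X$, the map $\bar\alpha$ is an entire inclusion of stratified simplicial sets, and a direct verification shows that the pushout of $\bar\alpha$ along $\bar f$ computed in $\pstrat$ introduces no multi-labels and coincides with $Y$. Thus $\beta$ is exhibited as a pushout of $\bar\alpha$ in $\pstrat$, and it suffices to show that $\bar\alpha$ itself is $\Lambda_n$-anodyne in $\pstrat$. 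For types (2) and (3) of \cref{anodynemaps}, which add a single marking, $\bar\alpha$ is either the identity or equal to $\alpha$. The main obstacle is the type (4) saturation extension, where $\bar\alpha$ adds only a subset of the markings added by $\alpha$; here one decomposes $\bar\alpha$ into a finite composition of pushouts of elementary anodyne maps via a careful combinatorial analysis, using triviality extensions of type (3) to mark individual simplices in dimensions above $n$ and exploiting the join structure with $\eqDelta$ to handle the remaining low-dimensional markings.
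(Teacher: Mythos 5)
Your overall reduction --- to pushouts of single elementary anodyne maps, using that retracts and filtered colimits agree in $\strat$ and $\pstrat$ --- matches the paper's, and your treatment of types (1)--(3) is sound (type (1) via \cref{RegularPushout}, types (2)--(3) because only one simplex acquires a new marking). The gap is in the saturation case (4), and it is twofold. First, the claim that the pushout of $\bar\alpha$ along $\bar f$ in $\pstrat$ ``introduces no multi-labels'' is false: two distinct simplices newly marked in $\bar J$ can have the same \emph{unmarked} image in $X$. Take $l=-1$ and let $\eqDelta\to N^{RS}(\mathbb I)$ be the $3$-simplex $(g^{-1},g,g^{-1})$, $g$ a non-identity morphism of the free isomorphism $\mathbb I$: the edges $[01]$, $[23]$ and $[03]$ all map to the unmarked edge $g^{-1}$, so none of them is marked in $\bar I$ (hence $\bar\alpha=\alpha$), yet the $\pstrat$-pushout gives $g^{-1}$ three labels and is not stratified. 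This is exactly the pushout the paper exploits in its discussion of the role of saturation, where $N^{RS}(\mathbb I)\to N(\mathbb I)^{\sharp}$ is a pushout of the saturation extension \emph{in $\strat$} but not in $\pstrat$. Second, even where your enlarged pushout is stratified, the decomposition of $\bar\alpha$ into elementary anodyne maps is only gestured at; this is the actual content of the hard case, and the sketch is not convincing as stated (triviality extensions only mark simplices in dimension $>n$, while the problematic new markings include $1$-simplices such as $[12]$).

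The paper sidesteps all of this with \cref{retractlemma}: for \emph{any} monomorphism $f\colon A\to B$ in $\pstrat$ with $A$ stratified, the map $RA\to RB$ is a retract of $f$; the proof explicitly constructs a section $RB\to B$ by choosing preimages of labels compatibly with the markings coming from $A$ and from degeneracies. Since the $\strat$-pushout of an elementary anodyne map is $R$ applied to the $\pstrat$-pushout, and anodyne extensions are closed under retracts, this settles all four types uniformly, with no case analysis and no need for the pushout in $\pstrat$ to be stratified. To repair your argument you would need either to prove such a retract statement or to genuinely carry out (and first correct) the decomposition you sketch; as written, the saturation case is not established.
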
 

We can easily prove the theorem by means of the following lemma, which will be proven afterwards.
\begin{lem}
\label{retractlemma}
Let $A$ be a stratified simplicial set and $f\colon A \to B$ a monomorphism in $\pstrat$. Then $Rf\colon A\cong RA\to RB$ is a retract of $f$.
\end{lem}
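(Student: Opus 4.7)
The task is to produce a retract diagram
\[
\begin{tikzcd}
RA \arrow[r, "u"] \arrow[d, "Rf" left] & A \arrow[r, "v"] \arrow[d, "f"] & RA \arrow[d, "Rf"] \\
RB \arrow[r, "i_B"] & B \arrow[r, "r_B"] & RB
\end{tikzcd}
\]
with $vu=\id$ and $r_B i_B=\id$. Because $A$ is stratified the unit $\eta_A\colon A\to iRA$ is an isomorphism, so I would take $u:=\eta_A^{-1}$, $v:=\eta_A$, and set $r_B:=\eta_B\colon B\to iRB$; the right square then commutes by naturality of $\eta$ at $f$, and the entire task reduces to producing a splitting $i_B\colon iRB\to B$ which is a section of $\eta_B$ and for which the left square $i_B\circ Rf=f\circ\eta_A^{-1}$ holds.

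Since $\eta_B$ is the identity on underlying simplicial sets, $i_B$ is forced to be the identity there, and the remaining data is, for each $n\ge 1$, a set-theoretic section of the surjection $\varphi_B\colon tB_n\twoheadrightarrow t(RB)_n=\im\varphi_B$. Two requirements pin such a section down on two specific subsets of $t(RB)_n$: presheaf compatibility of $i_B$ with the degeneracy-marking maps $\zeta^{i*}$ forces $i_B(s^i(y))=\zeta^{i*}_B(y)$ on the subset of degenerate markings, while commutativity of the left square forces $i_B(f(\varphi_A(a)))= tf(a)$ on the subset of markings coming from $A$.

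The heart of the argument will be verifying that these two prescriptions are well-defined and agree on their overlap. Well-definedness of the degeneracy prescription is precisely the well-definedness of the counit $(UB)^\flat\to B$ of the adjunction $(-)^\flat\dashv U$, and so is automatic. Well-definedness of the $A$-prescription uses that $f$ is a monomorphism in $\pstrat$ (hence $f_n$ is injective) and that $A$ is stratified (hence $\varphi_A$ is injective), which together make $a\mapsto f(\varphi_A(a))$ injective. For the overlap, if $f(\varphi_A(a))=s^i(y)$, then applying $d^i$ and then $s^i$ and using injectivity of $f$ shows that $\varphi_A(a)$ is itself degenerate in $A$; stratification of $A$ then forces $a$ to be the unique degeneracy marking $\zeta^{i*}_A(x)$ over it, with $x$ the unique preimage of $y$ under $f$, after which naturality of $f$ with respect to $\zeta^{i*}$ identifies the two prescribed values. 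On the remaining elements of $t(RB)_n$ I would simply choose any section of $\varphi_B$.

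It then remains to check that $i_B$ is a morphism in $\pstrat$: naturality with respect to cofaces and codegeneracies is automatic since $i_B$ is the identity on underlying simplicial sets, and naturality with respect to the additional generators $\varphi$ and $\zeta^i$ of $t\Delta$ is exactly the section condition and the degeneracy prescription. Both retract identities then hold by construction. The main obstacle is the overlap compatibility, and it is precisely there that both hypotheses, ``$A$ is stratified'' and ``$f$ is a monomorphism'', are essentially used.
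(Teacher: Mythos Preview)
Your proposal is correct and follows essentially the same construction as the paper: both build a section $RB\to B$ of the unit that is the identity on underlying simplicial sets, with the component at $[m]_t$ pinned down on markings coming from $A$ and on degenerate markings, and chosen arbitrarily elsewhere. The only cosmetic difference is that you dispatch the well-definedness of the degeneracy prescription via the counit of $(-)^\flat\dashv U$ while the paper computes it directly from the relation $s^i\zeta^{j+1}=s^j\zeta^i$, and conversely you are more explicit than the paper about the overlap with the $A$-prescription, which the paper dismisses as ``clear, since $A$ is stratified''.
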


\begin{proof}[Proof of \cref{anodyneinStratisanodyneinpStrat}]
Any anodyne extension $A \to B$ in $\strat$ is a retract of a transfinite composition of pushouts of elementary anodyne maps in $\strat$. Given that filtered colimits and retracts in $\strat$ and $\pstrat$ coincide, as mentioned in \cref{rmkretracts}, it suffices to show that the pushout in $\strat$ of an elementary $\Lambda_n$-anodyne extension is an anodyne extension in $\pstrat$.

Given any elementary $\Lambda_n$-anodyne extension $I\to J$, and any morphism $I\to X$ in $\strat$, the pushout $X\to P$ in $\psh{t\Delta}$ of $I\to J$ along $I\to X$ is a $\Lambda_n$-anodyne extension (and in particular a monomorphism) in $\psh{t\Delta}$. By \cref{retractlemma}, the map $X\cong RX\to RP$ is a retract of $X\to P$, so in particular it is a $\Lambda_n$-anodyne extension in $\psh{t\Delta}$. We conclude observing that this map is precisely the pushout of $I\to J$ along $I\to X$ in $\strat$.
\end{proof}

We now prove the lemma.

\begin{proof}[Proof of \cref{retractlemma}]
We construct a map $j\colon RB\to B$ in $\psh{t\Delta}$ as follows. First, we define $j$ on the underlying simplicial set.
\begin{itemize}
    \item[(0)] For any $m\ge0$, the component
    $$j_m\colon (RB)_m\cong B_m\to B_m$$
    is the identity.
    \end{itemize}
    
    We are left to construct a map $t(RB)_m \to tB_m$ for every $m\geq 1$. Recall that $t(RB)_m$ was defined as $\im(\varphi\colon tB_m \to B_m)$, so that we are essentially looking for a section of a surjective function. However, in order to obtain a $t\Delta$-set we need to ensure some compatibilities; that is, the (unique!) markings in $A$ are mapped identically and that the distinguished markings of degenerate simplices are mapped to such again. This leads to the following three steps.
    \begin{itemize}
    \item[(1)] For any $m\ge1$, the restriction of the $[m]_t$-component to the image of $Rf$,
    $$tj_m\colon \left(t(RB)_m\cap\im(Rf)\right)\to tB_m$$ is given by $tj_m(t(Rf)_m(a)):=tf_m(a)$ for any $a\in tA_m$. 
    \end{itemize}
    We argue that this assignment is well-defined, in that if $t(Rf)_m(a)=t(Rf)_m(a')$ for $a, a'\in tA_m$, then $a=a'$. 

    Suppose then that $t(Rf)_m(a)=t(Rf)_m(a')$. Since $f$ and $Rf$ agree on the underlying simplicial sets, we obtain the equalities 
            \begin{align*}
            f_m(\varphi(a))&=(Rf)_m(\varphi(a))=\varphi(t(Rf)_m(a))=\varphi(t(Rf)_m(a'))\\
            & = (Rf)_m(\varphi(a'))=f_m(\varphi(a'))
        \end{align*}
   Since $f$ is a monomorphism by assumption, we furthermore obtain that $a$ and $a'$ share the same underlying simplex $\varphi(a)=\varphi(a')$. Finally, since $A$ is a stratified simplicial sets, $\varphi$ is injective and $a=a'$, as claimed.
   
   Next, we define $j$ on all the marked simplices which are distinguished markings of a degenerate simplex.
    \begin{itemize}
        \item[(2)] For any $m\ge1$, the restriction of the $[m]_t$-component to the image of $\zeta_k$ for $0\leq k \leq m-1$,
    $$tj_m\colon \left(t(RB)_m\cap\im(\zeta_k)\right)\to tB_m$$ is given by $j(\zeta_k(b)):=\zeta_k(b)\in t(RB)_m$ for any $b\in B_{m-1}=(RB)_{m-1}$ and some $0\leq k \leq m-1$.
    \end{itemize}
    We need to check that simplices that are degenerate in several ways lead to the same element, and that this is compatible with the assignment on the preimage of $A$. The latter is clear, since $A$ is stratified. We now address the case of multiple degeneration, by showing that for any element of $t(RB)_m$ which can be written in the form $\zeta_k(b)$, different representatives give the same element in $t(RB)_m$. Suppose then that $\zeta_kb=\zeta_lb'$ for some $b,b' \in B_{m-1}=RB_{m-1}$. Then in particular $s_kb=s_lb'$. Assume without loss of generality that $k<l$ (the case $k=l$ may be excluded since $s_k$ is injective). Then
\[b=d_ks_kb=d_ks_lb'=s_{l-1}d_kb'\in(RB)_{m-1}.\]
From the relation $s^k\zeta^{(l-1)+1}=s^{l-1}\zeta^{k}$ for $k\leq l-1$, we obtain that
\[
\zeta_kb=\zeta_ks_{l-1}(d_kb')=\zeta_ls_kd_kb'\in\thin{B}{m}.
\]
Since $s_lb'=s_kb=s_ks_{l-1}d_kb'=s_ls_kd_kb'$ and $s_l$ is injective, we conclude $b'=s_kd_kb'$ and in particular
\[\zeta_kb=\zeta_ls_kd_kb'=\zeta_lb'\in\thin{B}{m},\]
as claimed.

Finally, we look at all the remaining marked simplices, which are just mapped to any preimage.
\begin{itemize}
        \item[(3)] For any $m\ge1$, the restriction of the $[m]_t$-component to the complement of the image of $Rf$ and $\zeta_k$'s,
    $$tj_m\colon t(RB)_m\setminus\left(\im(Rf)\cup\bigcup_k\im(\zeta_k)\right)\to tB_m$$ is given by $j(\tilde b):=b\in \thin{B}{m}$ for any $b\in\thin{B}{m}$ such that $\varphi(b)=\tilde b$ in  $\thin{RB}{m}=\im(\thin{B}{m} \xrightarrow{\varphi} B_m)$.
\end{itemize}
We conclude observing that the components of $j$ were designed to assemble to a well-defined map of prestratified simplicial sets, it is identity on underlying simplicial sets and is compatible with markings and degenerate markings, and moreover the diagram
\[
\begin{tikzcd}
A \arrow[r, equal]\arrow[d, "Rf" swap] & A\arrow[d, "f"]\\
RB \arrow[r, "j" swap]& B.
\end{tikzcd}
\]
commutes, as desired.
\end{proof}

Thanks to \cref{PPinstratandpstrat,anodyneinStratisanodyneinpStrat}, showing that the pushout-product of a generating monomorphism and an elementary anodyne extension is anodyne in $\pstrat$ boils down to showing that the same map is an anodyne extension in $\strat$.

\subsection{Pushout-products of anodyne extensions and monomorphisms in $\strat$}

In this subsection, we show that the pushout-product
of certain $\Lambda_n$-anodyne extensions $I\to J$ and a generating monomorphism $K\hookrightarrow L$ is a $\Lambda_n$-anodyne extension in $\strat$. In each of the cases that we treat, the map 
$$(I\times L)\aamalg{I\times K}(J\times K)\to J\times L$$
is an entire inclusion. This means that it is an identity on the underlying simplicial set, and we only need to show that we can mark all the simplices that are marked in $J\times L$ by means of a pushout with a suitable elementary $\Lambda_n$-anodyne extension.

\label{Pushout-product of anodyne extensions and monomorphisms}
\begin{lem}
\label{PPsaturationboundary}
Let $n\ge0$. Given $m\geq 1$ and $l\geq -1$, the pushout-product of the saturation anodyne map $\Delta[l]\star\eqDelta\hookrightarrow \Delta[l]\star\Delta[3]^\sharp$ with the boundary inclusion $\partial\Delta[m]\hookrightarrow\Delta[m]$,
\[
(\Delta[l]\star\eqDelta)\times \Delta[m] \quad \stackunder[0.2cm]{\amalg}{\scriptstyle{(\Delta[l]\star\eqDelta)\times \partial\Delta[m]}} \quad (\Delta[l]\star\Delta[3]^{\sharp})\times \partial\Delta[m]  \hookrightarrow(\Delta[l]\star\Delta[3]^{\sharp})\times \Delta[m],
\]
is a $\Lambda_n$-anodyne extension in $\strat$.
\end{lem}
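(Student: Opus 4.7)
The plan is to exploit the fact that the saturation anodyne extension $\Delta[l]\star\eqDelta\hookrightarrow\Delta[l]\star\Delta[3]^\sharp$ is an entire inclusion, so the pushout-product is also entire (entire inclusions being stable under pushout-product with any monomorphism, as one checks directly on underlying simplicial sets). The task therefore reduces to installing certain missing markings one at a time via pushouts of saturation anodyne extensions.

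I would first identify the precise set of simplices marked in the target $(\Delta[l]\star\Delta[3]^\sharp)\times\Delta[m]$ but not in the source pushout. A $k$-simplex of the product has the form $(\alpha\star\beta,\gamma)$ with $\alpha$ a simplex of $\Delta[l]$ (possibly empty), $\beta$ a simplex of $\Delta[3]$ (possibly empty), and $\gamma$ a simplex of $\Delta[m]$, satisfying $\dim\alpha+\dim\beta+1=\dim\gamma$. Being marked in the target requires $\alpha$ degenerate, $\beta$ of positive dimension, or $\gamma$ degenerate. Being marked in the source requires additionally that either the contribution from $\beta$ already be present in $\eqDelta$ (so $\beta\in\{[02],[13]\}$ or $\dim\beta\geq 2$) or that $\gamma$ factor through $\partial\Delta[m]$. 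A direct case analysis then shows that the missing markings are precisely the simplices $(\alpha\star\beta,\mathrm{id}_{[m]})$ where $\beta\in\{[01],[12],[23],[03]\}$ and $\alpha$ is a non-degenerate $(m-2)$-simplex of $\Delta[l]$ (interpreting $\alpha=\varnothing$ when $m=1$); in particular, when $m>l+2$ nothing needs to be added and the map is already an isomorphism.

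To install these markings, I would proceed by induction on $\alpha$ (say by decreasing dimension), simultaneously installing all four missing markings associated to a given $\alpha$ by a single pushout of the two-sided saturation anodyne extension
\[
\Delta[\dim\alpha]\star\eqDelta\star\Delta[r]\hookrightarrow\Delta[\dim\alpha]\star\Delta[3]^\sharp\star\Delta[r],
\]
which lies in the saturation of $\Lambda_n$ by the remark following \cref{anodynemaps}. The classifying map sends the left factor onto the copy of $\alpha$ inside $\Delta[l]$, sends the middle factor onto the copy of $\Delta[3]$ in the join, and uses the right $\Delta[r]$ factor to parametrize the additional ``diagonal'' direction produced by $\mathrm{id}_{[m]}$; the pushouts in $\strat$ and in $\pstrat$ coincide by \cref{RegularPushout} and \cref{PPinstratandpstrat}, so we may compute in whichever is more convenient.

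The main obstacle will be the combinatorial bookkeeping: verifying that each successive pushout adds precisely the intended family of markings and nothing more, and that the classifying map is compatible with the markings already installed in previous steps. This requires a careful choice of the parameter $r$ and of the map into $(\Delta[l]\star\Delta[3]^\sharp)\times\Delta[m]$ for each $\alpha$, together with a direct identification of the marked simplices in each intermediate stratified simplicial set; once this filtration is set up correctly, the conclusion follows from the closure of anodyne extensions under pushout and transfinite composition.
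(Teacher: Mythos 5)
Your reduction to an entire inclusion and the plan of installing the missing markings by pushouts is the right general shape, and it matches the paper's strategy. But your identification of which simplices are missing is wrong, and the error propagates through the rest of the argument. The stratification on a product in $\pstrat$ is computed levelwise, so a simplex $(\alpha\star\beta,\gamma)$ of $(\Delta[l]\star\Delta[3]^{\sharp})\times\Delta[m]$ is marked if and only if \emph{both} components are marked; in particular its $\Delta[m]$-component $\gamma$ must be degenerate. You have instead treated the condition as a disjunction (``$\alpha$ degenerate, $\beta$ of positive dimension, or $\gamma$ degenerate''), which leads you to conclude that the missing simplices are those with $\gamma=\id_{[m]}$. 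Those simplices are not marked in the target at all (since $\id_{[m]}$ is non-degenerate in $\Delta[m]$), so there is nothing to install there. The genuinely missing simplices are the $(\alpha_1\star\alpha_2,\beta)$ with $\alpha_1$ injective, $\alpha_2\in\{[01],[12],[23],[03]\}$, and $\beta\colon[k]\to[m]$ a \emph{degenerate surjection} with $\beta(k)=m$; these live in dimensions $m<k<l+3$ and are entirely outside the family your filtration produces. So as written the construction terminates without having marked what needs to be marked.

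Two further points. First, the paper handles the correct family in two stages, distinguishing the cases $\beta(k-1)=m$ and $\beta(k-1)=m-1$: the first is dealt with by pushouts of the one-sided elementary saturation extensions $\Delta[k-2]\star\Delta[3]_{eq}\to\Delta[k-2]\star\Delta[3]^{\sharp}$, and the second by pushouts of complicial thinness extensions $\Delta^{k}[k+1]'\to\Delta^{k}[k+1]''$; a single type of cell does not suffice. Second, your appeal to the two-sided maps $\Delta[a]\star\Delta[3]_{eq}\star\Delta[r]\to\Delta[a]\star\Delta[3]^{\sharp}\star\Delta[r]$ being in the saturation of $\Lambda_n$ leans on a remark whose justification is deferred elsewhere and is stated in terms of lifting properties of fibrant objects in the presence of the other anodyne classes; using it inside the proof of this pushout-product lemma risks circularity and is in any case unnecessary, since only the elementary generators of \cref{anodynemaps} are needed.
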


The strategy for the proof was shared with us by Riehl in a personal communication.

\begin{proof}
We first observe that the putative anodyne extension is 
an entire inclusion with underlying simplicial set $(\Delta[l]\star\Delta[3])\times\Delta[m]\cong\Delta[l+4]\times\Delta[m]$, and we now analyze the differences in the stratifications.

For $k\geq l+3$, we observe that every $k$-simplex is marked in $\Delta[l]\star\eqDelta$. 
Indeed, a $k$-simplex of $\Delta[l]\star\eqDelta$ can be written in the form
$$\alpha_1\star\alpha_2\colon\Delta[k_1]\star\Delta[k_2]\cong\Delta[k_1+1+k_2]\xrightarrow{}\Delta[l]\star\Delta[3]$$
with $k_1, k_2\geq -1$ and $k_1+k_2=k-1$. Now if $k_2\geq 2$, then the resulting $k$-simplex is marked since $\eqDelta$ is $1$-trivial. If $k_2\leq 1$, we conclude $k_1\geq k-2 \geq l+1$, so that the $k_1$-simplex $\alpha_1$ of $\Delta[l]$ is necessarily degenerate and thus marked. This shows that every $k$-simplex for $k\geq l+3$ is marked in $\Delta[l]\star\eqDelta$.
Combined with the fact that $\Delta[m]$ is $m$-trivial, we obtain that every $k$-simplex of the left-hand side (and thus of the right-hand side) is marked for $k\geq \max\{l+3,m+1\}$. 
For $k\leq m$, every marked $k$-simplex of $\Delta[m]$ is degenerate, and therefore lies in $\partial \Delta[m]$, so that the marked simplices in dimensions $k\leq m$ coincide on both sides. This means the simplices that are marked in the right-hand side and not in the left-hand side are concentrated in dimensions $m < k < l+3$, whenever this set is non-empty. 

Any $k$-simplex of $\Delta[l+4]\times \Delta[m]$ can be written in the form 
$$(\alpha_1\star\alpha_2, \beta)\colon\Delta[k_1]\star\Delta[k_2]\cong \Delta[k] \xrightarrow{}(\Delta[l]\star\Delta[3])\times\Delta[m]$$
with $k_1, k_2\geq -1$ and $k_1+k_2=k-1$.
For such a simplex being marked on the right-hand side but not on the left-hand side, it is necessary that $k_2=1$ and that $\alpha_2$ represents one amongst $[01], [12], [23], [03]$. Furthermore, $\beta$ needs to be surjective, since otherwise $\beta$ would be contained in $\partial \Delta[m]$. 
Finally, we observe that if $k_1>l$, then also $\alpha_1$ needs to be degenerate and so $\alpha_1\star\alpha_2$ is already marked in $\Delta[l]\star\eqDelta$. Thus if $(\alpha_1\star\alpha_2, \beta)$ is marked only in the right-hand side, we may assume that $k_1=k-2\leq l$ and $\alpha_1$ is injective. 
Since $\beta$ is surjective, we also know that $\beta(k)=m$.

We start by marking all simplices $(\alpha_1\star\alpha_2, \beta)$ for fixed $\alpha_1$ and $\beta$ and for which $\beta(k-1)=m$, by taking a suitable pushout along an elementary anodyne extension. To this end, consider the map of simplicial sets
$$(\alpha_1\star\id, \beta\circ s^{k}\circ s^{k+1})\colon\Delta[k-2]\star\Delta[3]\cong\Delta[k+2] \to (\Delta[l]\star\Delta[3])\times \Delta[m].$$
In order to upgrade it to a map of stratified simplicial sets
\begin{align*}
&\Delta[k-2]\star\eqDelta \to (\Delta[l]\star\eqDelta)\times \Delta[m] \quad \stackunder[0.2cm]{\amalg}{\scriptstyle{(\Delta[l]\star\eqDelta)\times \partial\Delta[m] }} \quad (\Delta[l]\star\Delta[3]^{\sharp})\times \partial\Delta[m],
\end{align*}
we observe that a simplex
$$\gamma_1\star\gamma_2\colon\Delta[s_1]\star\Delta[s_2] \xrightarrow{} \Delta[k-2]\star\Delta[3]$$
with $s_1,s_2\geq -1$ is marked in $\Delta[k-2]\star\eqDelta$ if and only if at least one amongst $\gamma_1$ and $\gamma_2$ is degenerate, or if $\gamma_2=[02]$ or $\gamma_2=[13]$, or $s_2=2,3$.
\begin{itemize}
    \item In the first case, one of the components $\gamma_i$ is degenerate, and therefore so is its composite with either of $\alpha_1\star\id$ and $\beta\circ s^{k}\circ s^{k+1}$. 
    \item In the second case, the simplex
    $(\alpha_1\star\id)\circ(\gamma_1\star\gamma_2)=(\alpha_1\circ\gamma_1)\star\gamma_2$ is marked in $\Delta[l]\star\eqDelta$ by definition, and the simplex $\beta\circ s^{k}\circ s^{k+1}(\gamma_1\star\gamma_2)$ is degenerate, as a consequence of the expressions
\begin{align*}
&\beta\circ s^{k}\circ s^{k+1}\circ(\gamma_1\star[02])(s_1+1)=\beta(k-1)=m\\
&\beta\circ s^{k}\circ s^{k+1}\circ(\gamma_1\star[02])(s_1+3)=\beta(k)=m\\
&\beta\circ s^{k}\circ s^{k+1}\circ(\gamma_1\star[13])(s_1+2)=\beta(k)=m\\
&\beta\circ s^{k}\circ s^{k+1}\circ(\gamma_1\star[13])(s_1+4)=\beta(k)=m.
\end{align*}
    \item In the third case, the simplex $(\alpha_1\circ\gamma_1)\star \gamma_2$ is marked in $\Delta[l]\star\eqDelta$ by definition of $\eqDelta$, and the simplex $\beta\circ s^{k}\circ s^{k+1} \circ (\gamma_1\star \gamma_2)$ is degenerate. 
 \end{itemize}
These considerations guarantee that the map of simplicial sets defines a map of stratified simplicial sets
\begin{align*}
&(\alpha_1\star\id, \beta\circ s^{k}\circ s^{k+1})\colon\\
&\Delta[k-2]\star\eqDelta \to (\Delta[l]\star\eqDelta)\times \Delta[m] \quad \stackunder[0.2cm]{\amalg}{\scriptstyle{(\Delta[l]\star\eqDelta)\times \partial\Delta[m] }} \quad (\Delta[l]\star\Delta[3]^{\sharp})\times \partial\Delta[m].
\end{align*}

Taking the pushout of this map along the elementary saturation anodyne map
$$\Delta[k-2]\star\eqDelta\to \Delta[k-2]\star\Delta[3]^{\sharp}$$
would mark, in particular, the $k$-simplex 
\[
(\alpha_1\star\id, \beta\circ s^{k}\circ s^{k+1}) \circ(\id\star\alpha_2)=(\alpha_1\star\alpha_2, \beta)\colon\Delta[k]\to\Delta[l+4]\times \Delta[m]
\]
for $\alpha_2=[01], [12], [23], [03]$.
Taking the pushout along the sum (over $\alpha_1$ and $\beta$ such that $\beta(k-1)=m$) of all maps constructed this way, we obtain a new stratified set $P$, which is an entire sub-stratified simplicial set of $\Delta[l]\star\Delta[3]^{\sharp}\times \Delta[m]$.

By the previous analysis, the only $k$-simplices marked in $\Delta[l]\star\Delta[3]^{\sharp}\times \Delta[m]$ and not in $P$ are those of the form $$(\alpha_1\star\alpha_2, \beta)\colon\Delta[k]\to(\Delta[l]\star\Delta[3])\times \Delta[m]$$
with $\alpha_2=[01], [12], [23], [03]$, $\alpha_1$ injective, $\beta$ surjective and degenerate, and $\beta(k-1)=m-1$. 

We proceed to marking all the missing simplices, by taking a suitable pushout along an elementary complicial thinness anodyne extension. For this, consider the simplicial map
\begin{align*}
   ((\alpha_1\star\alpha_2) \circ s^{k-1}, \beta \circ s^{k})\colon \Delta[k+1] \to \Delta[l+4]\times \Delta[m].
\end{align*}
In order to upgrade it to a map of stratified  simplicial sets $\Delta^{k}[k+1]'\to P$, we record the following.
\begin{itemize}
    \item If an $s$-simplex $\gamma\colon \Delta[s] \to \Delta[k+1]$ contains $\{k-1,k,k+1\}$ in its image, the simplices $(\alpha_1\star\alpha_2 )\circ s^{k-1}\circ\gamma$ and $(\beta \circ s^{k})\circ\gamma$ are both degenerate.
    \item The image of the $(k+1)$-st face $d^{k+1}\colon\Delta[k]\to\Delta[k+1]$ can be computed by means of the expressions 
\begin{align*}
    (\alpha_1\star\alpha_2) \circ s^{k-1} \circ d^{k+1} &= (\alpha_1\star\alpha_2) \circ d^k\circ s^{k-1}, \\
    \beta \circ s^{k} \circ d^{k+1} &=\beta.
\end{align*}
In particular, the first component $(\alpha_1\star\alpha_2) \circ s^{k-1}\circ d^{k+1}$ is degenerate by construction and the second component $\beta \circ s^{k} \circ d^{k+1}$ is degenerate by assumption.
    \item The image of the $(k-1)$-st face $d^{k-1}\colon\Delta[k]\to\Delta[k+1]$ can be computed by means of the expressions 
\begin{align*}
    (\alpha_1\star\alpha_2) \circ s^{k-1} \circ d^{k-1}= \alpha_1\star\alpha_2,\\
    \beta \circ s^{k} \circ d^{k-1}=\beta \circ d^{k-1} \circ s^{k-1}.
\end{align*}
Given that the second coordinate $\beta \circ s^{k}\circ d^{k-1}$ is degenerate and fulfils
\[
\beta \circ s^{k}\circ d^{k-1}=\beta \circ d^{k-1}\circ s^{k-1}(k-1)=\beta(k)=m, 
\]
the simplex $(\alpha_1\star\alpha_2, \beta \circ d^{k-1} \circ s^{k-1})$ is one of the simplices marked in $P$ by construction of $P$.
\end{itemize}
These considerations guarantee that the map of simplicial sets defines a map of stratified simplicial sets
$$\Delta^{k}[k+1]'\to P.$$

Taking the pushout of this map along the complicial thinness extension $$\Delta^{k-1}[k+1]'\to \Delta^{k-1}[k+1]''$$
would mark precisely the $k$-th face, which is $(\alpha_1\star\alpha_2, \beta)$. 
Taking the pushout along the sum (over $\alpha_1$, $\beta$ such that $\beta(k-1)=m-1$) of all maps constructed this way, marks all the missing simplices, obtaining precisely $\Delta[l]\star\Delta[3]^{\sharp}\times \Delta[m]$ from $P$, as desired.
\end{proof}

\begin{lem}
\label{PPsaturationmarking}
Let $n\ge0$. Given $n\geq 1$ and $l\geq -1$, the pushout-product
of the saturation anodyne map $\Delta[l]\star\eqDelta\hookrightarrow \Delta[l]\star\Delta[3]^\sharp$ with the generating monomorphism $\Delta[m]\hookrightarrow\Delta[m]_t$,
\[
(\Delta[l]\star\eqDelta)\times \Delta[m]_t \quad \stackunder[0.2cm]{\amalg}{\scriptstyle{(\Delta[l]\star\eqDelta)\times \Delta[m] }} \quad (\Delta[l]\star\Delta[3]^{\sharp})\times\Delta[m]  \hookrightarrow(\Delta[l]\star\Delta[3]^{\sharp})\times \Delta[m]_t
\]
is a $\Lambda_n$-anodyne extension in $\strat$.
\end{lem}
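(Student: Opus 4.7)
The putative inclusion is entire, with underlying simplicial set $\Delta[l+4]\times\Delta[m]$, so the argument reduces to identifying which simplices are marked on the right-hand side but not in the pushout, and marking each of them by pushing out along an elementary $\Lambda_n$-anodyne extension. Under the convention that $(\sigma,\tau)$ is marked in a product iff both components are, a $k$-simplex $(\alpha_1\star\alpha_2,\beta)$ lies in this difference precisely when $\alpha_1\star\alpha_2$ is marked in $\Delta[l]\star\Delta[3]^{\sharp}$ but not in $\Delta[l]\star\eqDelta$ and, simultaneously, $\beta$ is marked in $\Delta[m]_t$ but not in $\Delta[m]$. The second condition forces $k=m$ and $\beta=\id_{[m]}$; the analysis of the first, repeating the argument from the proof of \cref{PPsaturationboundary}, forces $k_1=m-2$, $k_2=1$, $\alpha_1\colon[m-2]\hookrightarrow[l]$ injective, and $\alpha_2\in\{[01],[12],[23],[03]\}$.

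The remaining work is the analogue of the final complicial thinness step in the proof of \cref{PPsaturationboundary}, and the situation here is strictly simpler: no preliminary saturation-type pushout is required since the missing simplices all sit in dimension exactly $m$, so one complicial thinness pushout per admissible pair $(\alpha_1,\alpha_2)$ will suffice. For each such pair, consider the $(m+1)$-simplex
\[\Phi=\bigl((\alpha_1\star\alpha_2)\circ s^{m-1},\ s^m\bigr)\colon\Delta[m+1]\to(\Delta[l]\star\Delta[3])\times\Delta[m].\]
Using the cosimplicial identities $s^{m-1}\circ d^{m-1}=s^{m-1}\circ d^m=s^m\circ d^m=s^m\circ d^{m+1}=\id_{[m]}$, while $s^m\circ d^{m-1}$ and $s^{m-1}\circ d^{m+1}$ are degenerate, one computes that $\Phi$ sends the $m$-th face of $\Delta[m+1]$ to $(\alpha_1\star\alpha_2,\id_{[m]})$, the $(m-1)$-st face to $(\alpha_1\star\alpha_2,\text{degenerate})$, and the $(m+1)$-st face to $(\text{degenerate},\id_{[m]})$.

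A direct case check then shows that $\Phi$ upgrades to a morphism of stratified simplicial sets $\Phi\colon\Delta^m[m+1]'\to P$, where $P$ denotes the pushout in the statement: any simplex of $\Delta[m+1]$ containing all three vertices $\{m-1,m,m+1\}$ is sent to a pair whose coordinates are both degenerate, since $s^{m-1}$ produces a repeat on positions mapping to $\{m-1,m\}$ in the first coordinate and $s^m$ produces a repeat on positions mapping to $\{m,m+1\}$ in the second; the $(m-1)$-st face lands in the subobject $(\Delta[l]\star\Delta[3]^{\sharp})\times\Delta[m]$ of $P$; and the $(m+1)$-st face lands in the subobject $(\Delta[l]\star\eqDelta)\times\Delta[m]_t$ of $P$. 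Taking the pushout of this morphism along the elementary complicial thinness anodyne extension $\Delta^m[m+1]'\to\Delta^m[m+1]''$ then adds precisely the marking of $(\alpha_1\star\alpha_2,\id_{[m]})$, and forming the coproduct of these pushouts over all admissible pairs $(\alpha_1,\alpha_2)$ marks every missing simplex and yields the target. The only real obstacle is the stratification-compatibility case analysis for $\Phi$, but it follows the pattern established in the proof of \cref{PPsaturationboundary}.
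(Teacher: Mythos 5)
Your proof is correct and follows essentially the same route as the paper's: the same identification of the missing marked simplices as those of the form $(\alpha_1\star\alpha_2,\id_{[m]})$ with $\alpha_1$ injective and $\alpha_2$ a suitable edge of $\Delta[3]$, the same auxiliary $(m+1)$-simplex $\bigl((\alpha_1\star\alpha_2)\circ s^{m-1},\,s^m\bigr)$ upgraded to a map $\Delta^m[m+1]'\to P$, and the same final pushout along the complicial thinness extension $\Delta^m[m+1]'\to\Delta^m[m+1]''$. (A minor point in your favor: your list of edges $[01],[12],[23],[03]$ is the correct one, whereas the paper's proof writes $[13]$ in this spot, which is a slip since $[13]$ is already marked in $\eqDelta$.)
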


\begin{proof}
We first observe that the putative anodyne extension is an entire inclusion with underlying simplicial set $\Delta[l+4]\times\Delta[m]$, and we now analyze the differences in the stratifications. If a $k$-simplex
$$(\sigma_1, \sigma_2)\colon \Delta[k] \to \Delta[l+4] \times \Delta[m]$$
is marked in the right-hand side, either $\sigma_2$ is an identity (and in particular $k=m$) or $\sigma_2$ is degenerate. In the latter case, the simplex is already marked in $\Delta[l]\star\Delta[3]^{\sharp}\times\Delta[m]$ and thus in the left-hand side, so without loss of generality we can assume that the $k$-simplex is of the form
$$(\sigma_1, \id) \colon \Delta[k] \to \Delta[l+4] \times \Delta[m].$$
We also know that $\sigma_1$ can be written in the form 
\[
\sigma_1=\alpha_1\star\alpha_2\colon \Delta[k_1]\star \Delta[k_2]=\Delta[k_1+1+k_2]=\Delta[m] \to \Delta[l]\star\Delta[3]
\]
for some $k_1,k_2\geq -1$ and some $\alpha_1, \alpha_2$. Since $\sigma_1$ is marked in $\Delta[l]\star\Delta[3]^{\sharp}$, either $\alpha_1$ is marked (and therefore degenerate) in $\Delta[l]$ or $\alpha_2$ is marked in $\Delta[3]^{\sharp}$ (possibly both). This implies that $\sigma_1$ is also marked in $\Delta[l]\star\eqDelta$ and $(\sigma_1, \id)$ is already marked in the left-hand side, unless $\alpha_1$ is injective, $k_2=1$ and $\alpha_2=[01], [12], [23], [13]$.

We will mark the missing simplices $(\alpha_1\star\alpha_2, \id)$ by taking a suitable pushout along a complicial thinness anodyne extension. To this end,
consider the map of simplicial sets 
\[
((\alpha_1\star\alpha_2) \circ s^{m-1}, s^m)\colon\Delta[m+1] \to \Delta[l]\star\Delta[3]\times\Delta[m].
\]
In order to upgrade it to a map of stratified simplicial sets
\[
\Delta^m[m+1]'\to \Delta[l]\star\eqDelta\times \Delta[m]_t \aamalg{\Delta[l]\star\eqDelta\times \Delta[m] }\Delta[l]\star\Delta[3]^{\sharp}\times\Delta[m],
\]
we record the following.
\begin{itemize}
\item If an $s$-simplex $\gamma\colon \Delta[s] \to \Delta[m+1]$ contains $\{m-1,m,m+1\}$ in its image, the simplices $(\alpha_1\star\alpha_2) \circ s^{m-1}\circ\gamma$ and $s^m\circ\gamma$ are both degenerate.

\item The image of the $(m-1)$-st face $d^{m-1}\colon\Delta[m]\to\Delta[m+1]$ can be computed by means of the expressions 
\[
((\alpha_1\star\alpha_2) \circ s^{m-1}, s^m) \circ d^{m-1}=(\alpha_1\star\alpha_2, s^m\circ d^{m-1}) .
\]
Given that the second component is degenerate and the first component is marked in $\Delta[l]\star\Delta[3]^{\sharp}$, the resulting simplex is marked in $\Delta[l]\star\Delta[3]^{\sharp}\times\Delta[m]$.
   \item The image of the $(m+1)$-st face $d^{m+1}\colon\Delta[m]\to\Delta[m+1]$ can be computed by means of the expressions 
\[
(\alpha_1\star\alpha_2 \circ s^{m-1}, s^m) \circ d^{m+1}=((\alpha_1\star\alpha_2)\circ s^{m-1} \circ d^{m+1}, \id) 
\]
Given that the first component is degenerate and the second is an identity, this simplex is marked in $\Delta[l]\star\eqDelta\times \Delta[m]_t$.
\end{itemize}
These considerations guarantee that the map of simplicial sets defines a map of stratified simplicial sets 
\[
\Delta^m[m+1]'\to \Delta[l]\star\eqDelta\times \Delta[m]_t \aamalg{\Delta[l]\star\eqDelta\times \Delta[m] }\Delta[l]\star\Delta[3]^{\sharp}\times\Delta[m].
\]
Taking the pushout of this map along the complicial thinness extension $$\Delta^m[m+1]' \to \Delta^m[m+1]''$$
would mark the simplex
$(\alpha_1\star\alpha_2, \id)$.
Taking the pushout of the sum (over $\alpha_1$ and $\alpha_2$) of the maps constructed in this way along the complicial thinness extension $\Delta^m[m+1]' \to \Delta^m[m+1]''$ marks all simplices $(\alpha_1\star\alpha_2, \id)$, as desired.
\end{proof}

\begin{lem}
\label{PPtrivialityboundary}
Let $n\ge0$. Given $m \geq 0$ and $l> n$, the pushout-product of the triviality anodyne map $\Delta[l] \hookrightarrow \Delta[l]_t$ with the boundary inclusion $\partial\Delta[m]\hookrightarrow\Delta[m]$,
\[
\Delta[l]\times \Delta[m] \aamalg{\Delta[l]\times \partial\Delta[m] }\Delta[l]_t\times \partial\Delta[m]  \hookrightarrow \Delta[l]_t\times \Delta[m],
\]
is a $\Lambda_n$-anodyne extension in $\strat$.
\end{lem}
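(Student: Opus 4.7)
The map in question is an entire inclusion, so I will focus on identifying precisely which simplices are marked in the codomain but not in the pushout-product, and then mark them using triviality anodyne extensions.

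\textbf{Step 1: Analyze the stratifications.} Since products in $\pstrat$ are computed pointwise, a $k$-simplex $(\sigma_1,\sigma_2)\colon\Delta[k]\to\Delta[l]\times\Delta[m]$ is marked in $\Delta[l]_t\times\Delta[m]$ if and only if $\sigma_1$ is marked in $\Delta[l]_t$ (i.e., either $\sigma_1$ is degenerate, or $k=l$ and $\sigma_1=\id_{[l]}$) and $\sigma_2$ is degenerate in $\Delta[m]$. On the other hand, a simplex is marked in the pushout $P:=\Delta[l]\times\Delta[m]\amalg_{\Delta[l]\times\partial\Delta[m]}\Delta[l]_t\times\partial\Delta[m]$ precisely when either both $\sigma_1$ and $\sigma_2$ are degenerate, or $\sigma_1$ is marked in $\Delta[l]_t$ and $\sigma_2$ factors through $\partial\Delta[m]$ (that is, is non-surjective).

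\textbf{Step 2: Identify the missing marked simplices.} A direct comparison shows that the simplices marked in $\Delta[l]_t\times\Delta[m]$ but not in $P$ are exactly those of the form $(\id_{[l]},\sigma_2)$ with $k=l$ and $\sigma_2\colon[l]\twoheadrightarrow[m]$ surjective and degenerate. For $l\le m$ no such $\sigma_2$ exists (if $l=m$, the unique surjection is the identity, which is not degenerate), and the putative inclusion is already an isomorphism; hence we may assume $l>m$, so in particular $\sigma_2$ is automatically degenerate.

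\textbf{Step 3: Mark the missing simplices via a single pushout.} For each surjection $\sigma_2\colon[l]\twoheadrightarrow[m]$, the pair $(\id_{[l]},\sigma_2)$ defines a non-degenerate $l$-simplex of $\Delta[l]\times\Delta[m]$, hence a map of stratified simplicial sets $(\id,\sigma_2)\colon\Delta[l]\to P$ (with the minimal stratification on $\Delta[l]$). Taking the coproduct over all such $\sigma_2$, we form the pushout in $\strat$
\[
P\amalg_{\coprod_{\sigma_2}\Delta[l]}\Bigl(\coprod_{\sigma_2}\Delta[l]_t\Bigr),
\]
which is a pushout of a coproduct of the elementary triviality anodyne extensions $\Delta[l]\to\Delta[l]_t$ (and hence a $\Lambda_n$-anodyne extension in $\strat$, since $l>n$). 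Because the image of the top non-degenerate simplex of each $\Delta[l]_t$ is the previously unmarked simplex $(\id,\sigma_2)$ of $P$, and all other markings added by the pushout are of already-marked (degenerate) simplices, the resulting stratified simplicial set is precisely $\Delta[l]_t\times\Delta[m]$.

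\textbf{Main obstacle.} The only subtlety is verifying that the coproduct pushout in $\strat$ performs exactly the desired markings and does not collapse simplices via the reflector $R$. This is handled by \cref{RegularPushout}: since each map $\Delta[l]\to\Delta[l]_t$ is a regular inclusion and each map $\Delta[l]\to P$ reaches a distinct non-marked simplex, the pushout is computed in $\pstrat$ and the reflector plays no role beyond identifying the two labels of each newly marked simplex with the corresponding top simplex of $\Delta[l]_t$, giving the stratified simplicial set $\Delta[l]_t\times\Delta[m]$ as claimed.
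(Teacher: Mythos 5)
Your proof is correct and follows essentially the same route as the paper's: observe the map is entire, identify that the only simplices marked in $\Delta[l]_t\times \Delta[m]$ but not in the pushout-product are the $l$-simplices $(\id_{[l]},\sigma_2)$ with $\sigma_2$ surjective (hence degenerate), and mark them by a pushout of a sum of triviality extensions $\Delta[l]\to\Delta[l]_t$. One small correction: in your final paragraph you call $\Delta[l]\to\Delta[l]_t$ a regular inclusion and invoke \cref{RegularPushout}, but this map is entire rather than regular (the non-degenerate top $l$-simplex is marked in the target but not in the source, so the pullback condition fails); the fact you need --- that the pushout in $\pstrat$ is already stratified, so no collapsing occurs --- instead follows directly because each newly added label maps to a distinct, previously unmarked simplex $(\id,\sigma_2)$, so the structure maps $t(-)_l\to(-)_l$ remain injective.
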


\begin{proof}
We first observe that the putative anodyne extension is an entire inclusion with underlying simplicial set $\Delta[l]\times\Delta[m]$, and we now analyze the differences in the stratifications.

If a $k$-simplex
$$(\sigma_1, \sigma_2)\colon \Delta[k] \to \Delta[l] \times \Delta[m]$$
is marked in the right-hand side, then $\sigma_1$ needs to be either degenerate or an identity, and $\sigma_2$ needs to be degenerate. If $\sigma_1$ is degenerate, then $(\sigma_1, \sigma_2)$ is already marked in $\Delta[l]\times \Delta[m]$ and thus in the left-hand side. So without loss of generality we can assume that $\sigma_1=\id$, and in particular $k=l$. This means that the only simplices that are marked in the right-hand side and not in the left-hand side must be of dimension $l$.
These simplices can be marked by taking a pushout of a sum of the triviality elementary anodyne extension $\Delta[l] \to \Delta[l]_t$.
\end{proof}

\begin{lem}
\label{PPtrivialitymarking}
Let $n\ge0$. Given $m\geq 1$ and $l>n$, the pushout-product
of the triviality anodyne map $\Delta[l]\hookrightarrow\Delta[l]_t$ with the generating monomorphism $\Delta[m]\hookrightarrow\Delta[m]_t$,
\[
\Delta[l]\times \Delta[m]_t \aamalg{\Delta[l]\times \Delta[m] }\Delta[l]_t\times\Delta[m]  \hookrightarrow \Delta[l]_t\times \Delta[m]_t
\]
is a $\Lambda_n$-anodyne extension in $\strat$.
\end{lem}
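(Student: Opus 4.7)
The plan is to show that the pushout-product is an entire inclusion missing at most one marking, and then to realize that missing marking via a single pushout of an elementary triviality anodyne extension. First I would verify that the pushout-product is an entire monomorphism, so its underlying simplicial set on both sides is $\Delta[l]\times\Delta[m]$, and the argument reduces to a dimensionwise comparison of markings. For a $k$-simplex $(\sigma_1,\sigma_2)\colon[k]\to[l]\times[m]$, being marked in the target means that $\sigma_1$ is degenerate or equal to $\id_{[l]}$ and $\sigma_2$ is degenerate or equal to $\id_{[m]}$; being marked in the source means the same, with the extra requirement that at least one of the two factors is actually degenerate.

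Next I would identify the missing markings. If $(\sigma_1,\sigma_2)$ is marked in the target but not in the source, then neither $\sigma_1$ nor $\sigma_2$ can be degenerate, which forces $\sigma_1=\id_{[l]}$ and $\sigma_2=\id_{[m]}$, and in particular $k=l=m$. Hence when $l\neq m$ the pushout-product is already an isomorphism and thus trivially a $\Lambda_n$-anodyne extension, while when $l=m$ the unique missing marked simplex is the diagonal $(\id_{[l]},\id_{[l]})$ in dimension $l$.

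In the case $l=m$, I would take the pushout in $\strat$ of the triviality anodyne extension $\Delta[l]\hookrightarrow\Delta[l]_t$ along the diagonal map $(\id,\id)\colon\Delta[l]\to \Delta[l]\times\Delta[l]_t\aamalg{\Delta[l]\times\Delta[l]}\Delta[l]_t\times\Delta[l]$, which is well defined on stratifications since degenerate simplices of $\Delta[l]$ are sent to pairs of degenerate simplices. A level-by-level inspection, using that $\Delta[l]\hookrightarrow\Delta[l]_t$ is an isomorphism at every level except $[l]_t$, where it adds only the top identity, shows that the pointwise pushout in $\pstrat$ is already stratified and coincides with the target $\Delta[l]_t\times\Delta[l]_t$. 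Since $l>n$, the map $\Delta[l]\hookrightarrow\Delta[l]_t$ is an elementary triviality $\Lambda_n$-anodyne extension, and so the pushout-product arises as a single pushout of an elementary $\Lambda_n$-anodyne extension, establishing the claim. The main (mild) obstacle is to check that the $\pstrat$-pushout remains stratified so that it really equals the $\strat$-pushout; this reduces to the observation that $(\id_{[l]},\id_{[l]})$ is not already identified with any previously marked simplex of the source, which is immediate from the first paragraph since neither factor of this pair is degenerate.
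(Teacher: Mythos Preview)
Your proposal is correct and follows essentially the same approach as the paper: both observe that the map is an entire inclusion, identify the only possibly missing marking as the diagonal $(\id_{[l]},\id_{[l]})$ in the case $l=m$, and then mark it via a pushout of the triviality extension $\Delta[l]\hookrightarrow\Delta[l]_t$. Your treatment is slightly more detailed than the paper's (you explicitly separate the cases $l\neq m$ and $l=m$ and verify that the $\pstrat$-pushout stays stratified), but the underlying argument is the same.
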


\begin{proof}

We first observe that the putative anodyne extension is an entire inclusion with underlying simplicial set $\Delta[l]\times\Delta[m]$, and we now analyze the differences in the stratifications.

If a $k$-simplex
$$(\sigma_1, \sigma_2)\colon \Delta[k] \to \Delta[l] \times \Delta[m]$$
is marked in the right-hand side, then $\sigma_1$ and $\sigma_2$ need to be either degenerate or identities. If at least one of the two is degenerate, then $(\sigma_1, \sigma_2)$ is already marked in the left-hand side. Thus, without loss of generality we can assume that $k=l=n$ and $\sigma_1=\sigma_2=\id$.
This means that the only simplices that are marked in the right-hand side and not in the left-hand side must be of dimension $l$.
These simplices can be marked by taking a pushout of a sum of the triviality elementary anodyne extension $\Delta[l] \to \Delta[l]_t$.
\end{proof}

\section{The Reedy structure on $t\Delta$}
\label{AppendixReedy}

We aim to endow $t\Delta$ with a Reedy structure. 
We refer the reader to \cite[\textsection 15.1]{Hirschhorn} for the Reedy structure $(\Delta_+,\Delta_-)$ on $\Delta$, with respect to the subcategory $\Delta_+$ consisting of all injective maps and the subcategory $\Delta_-$ consisting of all surjective maps.

\begin{notn}
\label{reedystructure}
Let $\degr\colon\Ob(t\Delta) \to \mathbb Z_{\ge 0}$ be the degree function defined by
\begin{align*}
&\degr([0])=0,\\
&\degr([k])=2k-1 \mbox{ for } k\geq 1,\\
&\degr([k]_t)=2k\mbox{ for } k\geq 1.
\end{align*}
The resulting (pre)order on $t\Delta$ can be pictured as
$$[0]<[1]<[1]_t<\dots<[k]<[k]_t<\dots.$$
We denote
\begin{itemize}
\item by $t\Delta_+$ the subcategory of $t\Delta$ generated by $\Delta_+$
and by the comarking maps $\varphi \colon [k]\to[k]_t$ for all $k\geq 1$, and
\item by $t\Delta_-$ the subcategory of $t\Delta$ generated by $\Delta_-$
and by the maps $\zeta^i \colon [k]_t\to[k-1]$ for all $k\geq 1$ and all $0\leq i \leq k-1$.
\end{itemize}
These subcategories are well-defined since $\Delta$ is a full subcategory of $t\Delta$ (cf~\cref{DeltaintDelta}).
\end{notn}

\begin{prop}\label{tDeltaReedy}
The category $t\Delta$ endowed with the structure from \cref{reedystructure} is a Reedy category.
\end{prop}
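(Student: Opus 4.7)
The plan is to verify the three defining properties of a Reedy category in turn: every non-identity morphism of $t\Delta_+$ strictly raises the degree, every non-identity morphism of $t\Delta_-$ strictly lowers it, and every morphism of $t\Delta$ admits a unique factorization as a map of $t\Delta_-$ followed by a map of $t\Delta_+$. The degree condition is verified directly on generators: the cofaces $d^i$ raise the degree by $2$, the map $\varphi\colon [k]\to [k]_t$ raises it by $1$, the codegeneracies $s^i$ lower it by $2$, and each $\zeta^i\colon [k]_t\to [k-1]$ lowers it by $3$ (or by $2$ when $k=1$). Since each generator changes the degree strictly in the required direction, so does every non-identity composition.

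The main part of the argument is the existence and uniqueness of the factorization, which I would handle via the following structural observation. Among the four families of generators of $t\Delta$, the only one with target of the form $[j]_t$ is $\varphi$, and the only one with source of the form $[k]_t$ is $\zeta^i$. Consequently, any non-identity morphism of $t\Delta$ ending at $[n]_t$ must end (outermost) with $\varphi$, and any non-identity morphism starting at $[m]_t$ must begin (innermost) with some $\zeta^i$; moreover, the relation $\zeta^i\varphi = s^i$ lets me reduce any morphism between objects of $\Delta$ to a morphism in $\Delta$. This gives a uniform description of every morphism as $\varphi^\varepsilon \circ g \circ (\zeta^i)^\delta$ with $g$ a morphism of $\Delta$ and $\varepsilon,\delta\in\{0,1\}$ depending on the source and target. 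Existence of the Reedy factorization then follows by applying the standard factorization $g = d\circ s$ in $\Delta$ to the central $\Delta$-piece and bundling $\varphi$ with $d$ (so $\varphi\circ d \in t\Delta_+$) and $\zeta^i$ with $s$ (so $s\circ\zeta^i \in t\Delta_-$).

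For uniqueness, the intermediate object in any factorization $f = f_+ \circ f_-$ is forced to be of the form $[k]$, since no generator of $t\Delta_-$ has codomain of the form $[\cdot]_t$. The main obstacle I anticipate is the uniqueness of the $\zeta^i$-factor on the source side: the second family of extra relations $s^k\zeta^{l+1} = s^l\zeta^k$ prevents the individual pair $(\bar s, i)$ in a representation $\bar s\circ\zeta^i$ from being recoverable from the morphism. The resolution is not to try to recover this pair, but to argue directly that the full composite $\bar s\circ\zeta^i\colon [m]_t \to [k]$ is determined by $f$. For this I would appeal to the full-subcategory embedding $t\Delta\hookrightarrow\strat$ recorded in \cref{DeltaintDelta}: two morphisms in $t\Delta$ agree precisely when they agree as maps of stratified simplicial sets $\Delta[m]_t\to\Delta[k]$, and since $\Delta[k]$ is stratified, such a map is determined by its underlying simplicial map $[m]\to[k]$, obtained by precomposing with $\varphi\colon\Delta[m]\to\Delta[m]_t$. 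This underlying map is precisely $\bar s\circ s^i$, which in turn is determined by $f$ and the standard Reedy uniqueness in $\Delta$ applied to $f\circ\varphi$. A symmetric argument (using that $\varphi$ becomes a monomorphism via the same embedding) shows that the $\varphi$-factor at the target is unique, concluding the proof.
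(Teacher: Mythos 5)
Your overall architecture is the same as the paper's: strip a possible $\varphi$ off the target end and a possible $\zeta^i$ off the source end, reduce the middle to the Reedy factorization in $\Delta$, observe that the intermediate object of any factorization must lie in $\Delta$, and reduce uniqueness to the two facts that $\varphi$ is a monomorphism (for the target side) and an epimorphism (for the source side) in $t\Delta$. The existence half and the target-side uniqueness are fine, as are the degree checks (modulo the trivial slips for $d^i\colon[0]\to[1]$ and $s^i\colon[1]\to[0]$, which still change degree in the right direction).

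The gap is in the source-side uniqueness, which is the only genuinely delicate point of the whole proposition. You need to show that if $\alpha\circ\zeta^i$ and $\beta\circ\zeta^j\colon[m]_t\to[k]$ satisfy $\alpha\circ s^i=\beta\circ s^j$ then they are equal, i.e.\ that precomposition with $\varphi$ is injective on $\Hom_{t\Delta}([m]_t,[k])$ — in other words, that $\varphi$ is an epimorphism. Your justification is that ``$\Delta[k]$ is stratified, so a map out of $\Delta[m]_t$ is determined by its underlying simplicial map.'' But for the representable presheaf $\Delta[k]$, being stratified at level $m$ \emph{means by definition} that $\varphi^*\colon\Hom_{t\Delta}([m]_t,[k])\to\Hom_{t\Delta}([m],[k])$ is injective; this is exactly the statement you are trying to prove, so the appeal to \cref{DeltaintDelta} is circular (that remark is itself only justified by the epimorphism property). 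The paper closes this gap with a separate combinatorial lemma (\cref{varphiEpi}): starting from $\alpha s^i=\beta s^j$ one precomposes with suitable cofaces to relate $\alpha$ and $\beta$, and then invokes the extra relation $s^i\zeta^{j+1}=s^j\zeta^i$ to conclude $\alpha\zeta^i=\beta\zeta^j$. This use of the second family of extra relations is unavoidable: in Street's category $h\Delta$, where that relation is absent, $\varphi$ fails to be an epimorphism and distinct $\zeta$-factorizations of the same underlying map really are distinct. So your proof needs either that explicit computation or an independent, non-Yoneda proof that the representables $\Delta[k]$ are stratified — which amounts to the same computation.
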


We need a preliminary lemma.

\begin{lem}
\label{varphiEpi}
For every $k\ge1$, the map $\varphi\colon[k]\to[k]_t$ is a monomorphism and an epimorphism in $t\Delta$. 
\end{lem}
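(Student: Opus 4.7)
The plan is to pass through the Yoneda embedding $t\Delta \hookrightarrow \psh{t\Delta}$, which is fully faithful, sends $[k]\mapsto\Delta[k]$ and $[k]_t\mapsto\Delta[k]_t$, and reflects and preserves monomorphisms (while, for epimorphisms, being epi in $t\Delta$ translates precisely to injectivity of precomposition on all Hom-sets into representables).

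For the monomorphism claim, the Yoneda embedding sends $\varphi\colon [k]\to[k]_t$ to the entire inclusion $\Delta[k]\hookrightarrow\Delta[k]_t$. By \cref{DeltaintDelta}, this map is the identity on the underlying simplicial set, and on the ``marked'' component in dimension $k$ it is the inclusion of the set of degenerate $k$-simplices into the set consisting of those plus the top non-degenerate $k$-simplex. Hence it is a monomorphism in $\psh{t\Delta}$ levelwise, and so $\varphi$ is a monomorphism in $t\Delta$.

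For the epimorphism claim, suppose $f,g\colon [k]_t\to[m]_{(t)}$ satisfy $f\varphi=g\varphi$; I need to show $f=g$. By the Yoneda identification, $\Hom_{t\Delta}([k]_t, [m]_{(t)})$ is the set of $k$-simplices of the representable $\Delta[m]_{(t)}$ indexed by $[k]_t$, that is, the set $\thin{\Delta[m]_{(t)}}{k}$ of marked $k$-simplices; similarly $\Hom_{t\Delta}([k], [m]_{(t)})=\Delta[m]_{(t)}([k])$, and the map induced by precomposition with $\varphi$ is exactly the structure map $\thin{\Delta[m]_{(t)}}{k}\to\Delta[m]_{(t)}([k])$. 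By \cref{DeltaintDelta}, $\Delta[m]_{(t)}$ is a stratified simplicial set, so this structure map is injective by definition, yielding $f=g$.

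Both directions therefore reduce to the observation recorded in \cref{DeltaintDelta} that each representable presheaf on $t\Delta$ belongs to $\strat$; the main (and only) conceptual step is recognizing that ``$\varphi$ is epi in $t\Delta$'' unpacks via Yoneda to precisely the stratification injectivity condition, which is the content of being in $\strat$ rather than merely in $\pstrat$.
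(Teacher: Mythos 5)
Your argument is circular. By the Yoneda lemma, the statement ``$\varphi\colon[k]\to[k]_t$ is an epimorphism for every $k$'' is \emph{literally equivalent} to the statement ``every representable presheaf on $t\Delta$ is a stratified simplicial set'': the structure map $\thin{\Delta[m]_{(t)}}{k}\to(\Delta[m]_{(t)})_k$ \emph{is} the precomposition map $\Hom_{t\Delta}([k]_t,[m]_{(t)})\to\Hom_{t\Delta}([k],[m]_{(t)})$, as you yourself observe in your closing paragraph. So invoking \cref{DeltaintDelta} to conclude injectivity of that map is invoking the conclusion to prove itself. The issue is that in this paper $t\Delta$ is \emph{defined} by generators and relations (\cref{tDelta}), and the explicit descriptions of the representables in \cref{notationrepresentable} and \cref{DeltaintDelta} are stated without proof; their rigorous justification requires exactly the kind of combinatorial analysis of the presentation that the appendix (and this lemma in particular) is carrying out. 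The same objection applies, somewhat less severely, to your monomorphism argument: the claim that $\Delta[k]\to\Delta[k]_t$ is entire with the stated marked simplices is again an unproved description of hom-sets in the presented category.

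What is actually needed is a direct argument from the presentation. For the monomorphism, one observes that no relation in $t\Delta$ rewrites a composite of the form $\varphi\circ\alpha$ into one not of that form, so postcomposition with $\varphi$ is injective. For the epimorphism, one uses that $\zeta^i$ are the only generators out of $[k]_t$, so any two maps $\alpha,\beta\colon[k]_t\to[m]$ factor as $\alpha'\zeta^i$ and $\beta'\zeta^j$ with $\alpha',\beta'\in\Delta$; the hypothesis $\alpha\varphi=\beta\varphi$ then reads $\alpha's^i=\beta's^j$ via the relation $\zeta^i\varphi=s^i$, and a case analysis using the simplicial identities together with the extra relation $s^i\zeta^{j+1}=s^j\zeta^i$ yields $\alpha'\zeta^i=\beta'\zeta^j$. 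That computation is the real content of the lemma, and your proposal omits it entirely.
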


\begin{proof}[Proof of \cref{varphiEpi}]
To see that $\varphi\colon[k]\to[k]_t$ is a monomorphism, it is enough to observe that there is no relation in $t\Delta$ of the form $\varphi\circ\alpha=\alpha'$.

In order to show that $\varphi\colon[k]\to[k]_t$ is an epimorphism, let two maps $\alpha, \beta \colon [k]_t\to [m]_{(t)}$ be given so that $\alpha\varphi=\beta\varphi$. Since we already know $\varphi$ is a monomorphism in $t\Delta$, we can assume that the target is actually $[m]$. Since only generating morphisms in $t\Delta$ starting in $[k]_t$ are the $\zeta^i$'s and since $\Delta$ is a full subcategory, we can find $\alpha', \beta'\in \Delta$ as well as $0\leq i,j\leq k-1$ so that $\alpha'\zeta^i=\alpha$ and $\beta=\beta'\zeta^j$. Thus, $\alpha\varphi=\beta\varphi$ implies $\alpha' s^i= \beta' s^j$. 
 Assume without loss of generality that $i\leq j$. 
If $i=j$, then precompose with $d^i$ to arrive at $\alpha'=\beta'$, and we are done in this case.

If $i<j$, then the relation $\alpha' s^i = \beta' s^j$
 implies by precomposing with $d^i$ that $\alpha'=\beta' s^j d^i =\beta' d^i s^{j-1}$. Precomposing with $d^j$ yields
$\beta'= \alpha' s^i d^j=\alpha' d^{j-1} s^i$ for $i< j-1$. Now we employ the relation $s^{i}\zeta^j=s^{j-1}\zeta^i$ to arrive at 
\[
\alpha'\zeta^i=\beta' d^i s^{j-1}\zeta^i=\beta' d^i s^i \zeta^j=\alpha' d^{j-1} s^i d^i s^i\zeta^j=\alpha' d^{j-1} s^i\zeta^j=\beta' \zeta^j,
\]
as desired. 
For $i=j-1$, we get $\alpha'=\beta'$ by precomposing with $d^{j}=d^{i+1}$ additionally to the already obtained $\alpha'=\beta'd^is^i$. Thus, we obtain, employing $s^{i}\zeta^{i+1}=s^{i}\zeta^i$:
\[
\alpha'\zeta^i=\alpha'd^is^i\zeta^i=\alpha'd^is^{i}\zeta^{i+1}=\beta'\zeta^j,
\]
which completes the proof. 
\end{proof}

We can now prove the proposition.

\begin{proof}[Proof of \cref{tDeltaReedy}]
As a preliminary remark, we observe that in $t\Delta_{-}$, there are no generating maps (and thus no non-identity maps) whose target is in $t\Delta\setminus\Delta$. Similarly, there are no non-identity maps in $t\Delta_{+}$ whose source is in $t\Delta\setminus\Delta$. 
Since $\varphi$ is the only generator with target in $[n]_t$, any map in $t\Delta_+$ either is in $\Delta_+$ or there exists a decomposition
\[
\begin{tikzcd}
{[m]} \arrow[r, tail, "\beta"] & {[n]}\arrow[r, tail, "\varphi"]& {[n]_t}
\end{tikzcd}
\]
with $\beta \in \Delta_+$.
Since $\varphi$ is a monomorphism by \cref{varphiEpi}, this decomposition is unique. 
Similarly, each map in $t\Delta_{-}$ is either in $\Delta_{-}$ or can be (non-uniquely!) decomposed as 
\[
\begin{tikzcd}
{[k]_t} \arrow[r, two heads, "\zeta_i"] & {[k-1]}\arrow[r, two heads, "\beta"]& {[n]}
\end{tikzcd}
\]
with $\beta \in \Delta_-$.

We need to show that each map in $t\Delta$ can be uniquely decomposed into a map in $t\Delta_{-}$, followed by a map in $t\Delta_{+}$. For this, we distinguish several cases. In every case, we first provide a factorization and then prove its uniqueness.
\begin{enumerate}
    \item A map $\alpha\colon[m]\to[n]$ between elements of $\Delta$ factors as
$$[m]\twoheadrightarrow[\im(\alpha)]\rightarrowtail[n],$$
where $\im(\alpha)+1$ is the cardinality of the image of $\alpha$.
Since $\Delta$ is Reedy, this factorization is unique in $\Delta$, and a different factorization in $t\Delta$ would need to be of the form
\[
[m] \twoheadrightarrow [k]_t \rightarrowtail [n],
\]
which is impossible as there are no maps of this form in $t\Delta_{-}$. 
    \item A map of the form $\alpha\colon[m]\to[n]_t$ factors as
$$[m]\twoheadrightarrow[k]\rightarrowtail[n]\rightarrowtail[n]_t,\quad$$
where the first two maps form the canonical factorization in $\Delta$ as in the first case.
Given any other factorization 
\[
[m]\twoheadrightarrow[k']_{(t)}\rightarrowtail[n]_t,
\]
we know by the preliminary remark that the latter map needs to be of the form $[k'] \rightarrowtail [n] \rightarrowtail [n]_t$, with the first map $[k'] \rightarrowtail [n]$ being in $\Delta_+$. This reduces the uniqueness of the factorization again to the analogous result in $\Delta$.
    \item A map of the form $\alpha\colon[m]_t\to[n]$ factors as
\[
\begin{tikzcd}
{[m]_t}\arrow[r, two heads, "\zeta^i"] &{[m-1]} \arrow[r, two heads, "\beta"] & {[k]} \arrow[r, tail] & {[n]},
\end{tikzcd}
\]
where the last two maps come from the factorization in $\Delta$. The choice of $i$ might not be unique, but we claim that the composite of $\zeta^i$ and $\beta$ is unique. Given any other factorization
\[
\begin{tikzcd}
{[m]_t}\arrow[r, two heads]  & {[l]_{(t)}} \arrow[r, tail] & {[n]},
\end{tikzcd}
\]
 we can conclude that the intermediate object has to be in $\Delta$ by the preliminary remark. 
 Now we can precompose both factorizations with $\varphi\colon[m]\to[m]_t$. The resulting factorizations
 \[
\begin{tikzcd}
{[m]}\arrow[r, two heads]  & {[l]} \arrow[r, tail] & {[n]}
\end{tikzcd}\text{ and }
\begin{tikzcd}
{[m]}\arrow[r, two heads]  & {[k]} \arrow[r, tail] & {[n]}
\end{tikzcd}
\]
 have to coincide since $\Delta$ is a Reedy category. In particular, the maps
  \[
\begin{tikzcd}
{[l]} \arrow[r, tail] & {[n]}
\end{tikzcd}\text{ and }
\begin{tikzcd}
{[k]} \arrow[r, tail] & {[n]}
\end{tikzcd}
\]
 coincide, and the maps
  \[
\begin{tikzcd}
{[m]}\arrow[r, two heads]  & {[l]}
\end{tikzcd}\text{ and }
\begin{tikzcd}
{[m]}\arrow[r, two heads]  & {[k]}
\end{tikzcd}
\]
 coincide.
 Moreover, by \cref{varphiEpi} we know that $\varphi\colon[m]\to[m]_t$ is an epimorphism and therefore the maps
   \[
\begin{tikzcd}
{[m]_t}\arrow[r, two heads]  & {[l]}
\end{tikzcd}\text{ and }
\begin{tikzcd}
{[m]_t}\arrow[r, two heads]  & {[k]}
\end{tikzcd}
\]
coincide.
    \item A non-identity map $\alpha\colon[m]_t\to [n]_t$ factors as
\[
\begin{tikzcd}
{[m]_t}\arrow[r, two heads, "\zeta^i"] &{[m-1]} \arrow[r, two heads, "\beta"] & {[k]} \arrow[r, tail] & {[n]}\arrow[r, tail, "\varphi"] & {[n]_t},
\end{tikzcd}
\]
where the two middle maps come from the factorization in $\Delta$.
Observe once again that since there is no relation of the form $\varphi\alpha=\alpha'$, any two different factorizations would need to agree before the last application of $\varphi$, which reduces this case to the previous one. 
\qedhere
\end{enumerate}
\end{proof}

Recall from \cref{skeletalregular} the notion of a regular skeletal Reedy category.

\begin{prop}
\label{tDeltasuperReedy}
The category $t\Delta$ is a regular skeletal Reedy category.
\end{prop}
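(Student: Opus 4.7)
The plan is to verify the three conditions of \cref{skeletalregular} for the Reedy structure of \cref{reedystructure}; the Reedy structure itself has already been established in \cref{tDeltaReedy}. Throughout, I would rely on the two facts about $\varphi$ established in \cref{varphiEpi} (that it is both a monomorphism and an epimorphism), and on two structural observations already made in the proof of \cref{tDeltaReedy}: every non-identity morphism of $t\Delta_-$ has target in $\Delta$, and every morphism of $t\Delta_+$ with source outside $\Delta$ is an identity (so every non-identity morphism of $t\Delta_+$ either lies in $\Delta_+$ or is of the form $\varphi \circ \beta$ with $\beta \in \Delta_+$).

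First I would dispose of Condition (3), that every morphism of $t\Delta_+$ is a monomorphism. Maps in $\Delta_+$ are monomorphisms in $\Delta$; to promote this to $t\Delta$, any cancellation test $\beta \alpha = \beta \alpha'$ with source $[k]_t$ can be reduced to the corresponding test with source $[k]$ by precomposition with $\varphi$, which is an epimorphism by \cref{varphiEpi}. For a map of the form $\varphi \circ \beta$, both factors are monomorphisms (again by \cref{varphiEpi}), so the composite is one as well.

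Next, for Condition (1), I would exhibit explicit sections. The key case is the generator $\zeta^i\colon [k]_t \to [k-1]$, which admits the section $\varphi \circ d^i\colon [k-1] \to [k]_t$ in $t\Delta_+$, as the relation $\zeta^i \varphi = s^i$ combined with $s^i d^i = \id$ shows. Sections for morphisms in $\Delta_-$ exist classically, so any factorization $f = f' \circ \zeta^i$ with $f' \in \Delta_-$ having section $s\in \Delta_+$ yields $\varphi \circ d^i \circ s$ as a section of $f$ in $t\Delta_+$.

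The main obstacle will be Condition (2): two parallel morphisms of $t\Delta_-$ agree if and only if they share the same set of sections. If the source is $[m]$, every section in $t\Delta_+$ is forced to lie in $\Delta_+$, and the statement reduces cleanly to the regular skeletal property of $\Delta$. The delicate case is that of source $[m]_t$: here two maps $f = f' \circ \zeta^i$ and $g = g' \circ \zeta^j$ have the feature that their sections in $t\Delta_+$ are precisely those of the form $\varphi \circ s'$ with $s'\in \Delta_+$ a section of $f' \circ s^i$ (respectively of $g' \circ s^j$). Equality of the section sets would translate, via the regular skeletal property of $\Delta$, into $f' \circ s^i = g' \circ s^j$, equivalently $f \circ \varphi = g \circ \varphi$; the epimorphism property of $\varphi$ from \cref{varphiEpi} would then yield $f = g$. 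The care required here is in the bookkeeping that identifies the sections of $f$ with those of $f' \circ s^i$, and this is where I would spend most of the effort.
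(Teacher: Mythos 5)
Your proposal is correct and follows essentially the same route as the paper's proof: sections of $\zeta^i$ given by $\varphi\circ d^i$, reduction of the section-set condition for maps out of $[m]_t$ to the corresponding condition in $\Delta$ by precomposing with $\varphi$ (using that every map into $[k]_t$ factors through $\varphi$ and that $\varphi$ is both monic and epic by \cref{varphiEpi}), and monomorphy of $t\Delta_+$ from monomorphy of $\Delta_+$ and of $\varphi$. The only cosmetic difference is in Condition (3), where the paper simply cites left inverses for maps in $\Delta_+$ rather than running the cancellation test through $\varphi$.
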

\begin{proof}
We verify the conditions (1)-(3) from \cref{skeletalregular} for $t\Delta$.
\begin{enumerate}
    \item We need to show that every morphism in $t\Delta_-$ admits a section. First, every morphism in $\Delta_-$ admits a section, given that every morphism  in $\Delta_-$ is a composition of $s^j$ for varying $j$, and $s^jd^j=\id$. 
Next, every $\zeta^i$ admits a section since we have $\zeta^i\varphi=s^i$ and thus 
$\zeta^i\varphi d^i=s^i d^i=\id$.
Therefore, any morphism in $t\Delta_{-}$, which is a composite of maps in $\Delta_-$ and of maps of the form $\zeta^i$, admits a section, as desired.
    \item We need to show that two parallel arrows in $t\Delta_-$ coincide if they have the same set of sections.
    Note that any morphism in $t\Delta_-$ is either in $\Delta_-$ or of the form $\alpha\zeta^i$ with $\alpha \in \Delta_-$.
    
Given that $t\Delta$ does not have non-trivial automorphisms, and that $\Delta$ is a full subcategory,
the statement for the morphisms in $\Delta_{-}$ follows from the one for $\Delta$.

Now assume that $\alpha\zeta^i, \beta\zeta^j\colon [k]_t\to[l]$ have the same set of sections. Since the sections are maps of the form $[l]\to [k]_t$, all of them can be necessarily written as $\varphi \gamma$ for some $\gamma$ in $\Delta$. Since $\varphi$ is a monomorphism, 
we conclude that  $\alpha\zeta^i\varphi=\alpha s^i$ and $\beta\zeta^j\varphi=\beta s^j$ have the same set of sections. Given that $\Delta$ is regular skeletal, we obtain that
$\alpha\zeta^i\varphi=\beta\zeta^j\varphi$.
Since $\varphi$ is an epimorphism by \cref{varphiEpi}, we conclude that $\alpha\zeta^i=\beta\zeta^j$, as desired.
    \item We need to show that every morphism in $t\Delta_+$ is a monomorphism. This is true for morphisms in $\Delta_+$, given that they have left inverses, and we observed that $\varphi$ is a monomorphism before.\qedhere
\end{enumerate}

\end{proof}


\bibliographystyle{amsalpha}
\bibliography{ref}
\end{document}